\numberwithin{equation}{section}
\newtheoremstyle{myremark}{10pt}{10pt}{}{}{\bfseries}{.}{.5em}{}
 \newtheorem{thm}{Theorem}
 \newtheorem{cor}{Corollary}[section]
 \newtheorem{lemma}{Lemma}[section]
 \theoremstyle{definition}
 \newtheorem{defn}{Definition}
 \newtheorem{rem}{Remark}
\begin{document}

\title[Fractional Hardy inequality]{Fractional Hardy inequality with singularity on submanifold}

\author{Adimurthi, Prosenjit Roy, AND Vivek Sahu}
\address{ Department of Mathematics and Statistics,
Indian Institute of Technology Kanpur, Kanpur - 208016, Uttar Pradesh, India}

\email{Adimurthi: adiadimurthi@gmail.com}
\email{Prosenjit Roy: prosenjit@iitk.ac.in}
\email{Vivek Sahu: viiveksahu@gmail.com}

\subjclass[2020]{ 46E35 (Primary); 26D15 (Secondary)}

\keywords{fractional Sobolev spaces; fractional Hardy inequality; critical cases; submanifold.}

\date{}

\dedicatory{}

\begin{abstract}
We establish fractional Hardy inequality on bounded domains in $\mathbb{R}^{d}$ with inverse of distance function from smooth boundary of codimension $k$, where $k=2, \dots,d$, as weight function. The case $sp=k$ is the critical case, where optimal logarithmic corrections are required.  All the other cases of $sp<k$ and $sp>k$ are also addressed. 
\end{abstract}

\maketitle


\section{Introduction}

Hardy inequalities are fundamental tools in analysis with wide applications in mathematical physics and partial differential equations. They reflect the effect of singularities and geometry through weighted terms and have been extended to various local and nonlocal settings. In this article, we establish Hardy-type inequalities with singularities on smooth submanifolds in the nonlocal framework, where the singular weight is obtained from the distance function to the submanifold.

\smallskip

Let  $\Omega$ be a bounded domain in  $\mathbb{R}^d$, where  $d \geq 2$, and  $K$ be a compact surface with piecewise smooth boundary of codimension  $k$, where  $k = 1, \dots, d$. We adopt the convention that when  $k = 1$,  $K$ represents the boundary of  $\Omega$, and when  $k = d$,  $K$ denotes a point (say origin). For any  $x \in \Omega$, we define
\begin{equation}
    \delta_{K}(x) := \inf_{y \in  K} |x-y|.
\end{equation}

\smallskip

Let $\Omega$ be a bounded domain in $\mathbb{R}^{d}$ with Lipschitz boundary, and let $1 < p < \infty$. Consider the case where $k = 1$ and $K = \partial \Omega$. The Hardy–Sobolev inequality with singularity on the boundary of $\Omega$ for the local case (cf. \cite{lewis1988}) states that there exists a constant $C = C(d, p, \Omega) > 0$ such that
\begin{equation}\label{boundary hardy inequality}
      \int_{\Omega} \frac{|u(x)|^{p}}{\delta^{p}_{\partial \Omega}(x)} \, dx \leq C \int_{\Omega} |\nabla u(x) |^{p} \, dx, \quad \ \forall \ u \in C^{\infty}_{c}(\Omega),
\end{equation}
where  $\delta_{\partial \Omega}$ is the distance function from the boundary of  $\Omega$. Furthermore, for the local case, Barbatis, Filippas, and Tertikas in  \cite{tertikas2003} (see also \cite{Barbatis2004}) delved into the Hardy-Sobolev inequality concerning smooth boundaries with codimension  $k$, where $1 \leq k \leq d$, deriving a series expansion of Hardy-Sobolev Inequalities.

\smallskip

The scope of this article is to present fractional version of the Hardy inequalities with singularity on smooth compact sets. Chen and Song in  \cite{chen2003} studied Hardy inequality with singularity on the boundary of a domain in fractional Sobolev spaces for the case  $p=2$. Afterwards, Dyda in \cite{Dyda2004} extended the Hardy inequality from the local case to the non-local case for arbitrary $p > 1$ and established the fractional Hardy inequality with singularity on the boundary of a domain under certain boundary conditions. In particular, for a bounded Lipschitz domain  $\Omega \subset \mathbb{R}^{d}, ~ d \geq 1, ~ p>1,  ~ s \in (0,1)$ and  $sp>1$, he proved that there exists a constant  $C=C(d,p,s, \Omega) >0$, such that  
\begin{equation}\label{fractionalhardy}
    \int_{\Omega} \frac{|u(x)|^{p}}{\delta_{\partial \Omega}^{sp}(x)} \, dx \leq C \int_{\Omega} \int_{\Omega} \frac{|u(x)-u(y)|^{p}}{|x-y|^{d+sp}} \, dx \, dy, \quad \forall \ u \in C^{\infty}_{c}(\Omega). 
\end{equation} 

\smallskip

In our recent work in  \cite{adimurthi2024fractional}, we studied the case  $k=1$ and  $K= \partial \Omega$ and established the complete range of fractional Hardy inequality with singularity on the boundary of a domain considering all the cases. In the same work, we also proved appropriate optimal fractional Hardy inequalities for the case  $sp = 1$ (see   \cite[Theorem 2]{adimurthi2024fractional} and \cite{AdiPurbPro2025} for the one-dimensional case). The case $sp = 1$ turns out to be the critical case, where an additional optimal logarithmic correction is required. 

\smallskip

In this article, our objective is to extend our investigation to the fractional Hardy inequality with smooth boundaries of codimension  $k$, where $k=2, \dots, d$. This extension is motivated by the need to generalize the known results and provide a comprehensive understanding of fractional Hardy inequalities in more complex geometrical settings. By considering smooth boundaries of higher codimension, we aim to uncover new insights and establish broader results that can potentially lead to further applications and developments in the theory of fractional Sobolev spaces and their associated inequalities. To characterize these boundaries, we define a smooth boundary  $K$ of codimension  $k$ in a manner similar to the definition of a domain with a Lipschitz boundary. When we refer to a function  $f \in C^{0,1}(\Omega)$, we imply that  $f$ is Lipschitz continuous on  $\Omega$.

\begin{defn}\label{definition}
   For any  $1<k< d$ with $k \in \mathbb{N}$, let  $K$ be a compact surface of codimension  $k$ in  $\mathbb{R}^{d}$,  we say that  $K$  is a class of  $C^{0,1}$ if there exists  $C>0$ such that, for any  $x \in K$, there exists a ball  $B=B_{r}(x) \subset \mathbb{R}^{d}, ~  r >0$ and a isomorphism  $T: Q \to B$ such that
    \begin{equation*}
        T \in C^{0,1}(\overline{Q}), \hspace{3mm} T^{-1} \in C^{0,1}(\overline{B}) \hspace{3mm} \text{and} \hspace{3mm} \|T\|_{C^{0,1}(\overline{Q})} + \|T^{-1}\|_{C^{0,1}(\overline{B})} \leq C,
    \end{equation*}
    where  $Q= B^{k}_{1}(0) \times (0,1)^{d-k}$ (see Figure \ref{fig:myfigure1}), with $B^{k}_{1}(0)$ is the unit ball centered at  $0$ in $\mathbb{R}^{k}$. For any  $\xi \in Q= B^{k}_{1}(0) \times (0,1)^{d-k}$, we denote as  $\xi= (\xi_{k}, \xi_{d-k})$, where  $\xi_{k} \in B^{k}_{1}(0)$ and  $\xi_{d-k} \in (0,1)^{d-k}$. Also,  $T^{-1}(B \cap K) =  \{ (\xi_{k}, \xi_{d-k}) \in Q : \xi_{k} =0 \}$. We also assume that for any  $y \in (\Omega \setminus K) \cap B$, we have
    \begin{equation*}
        \delta_{K}(y) \sim |\xi_{k}|,
    \end{equation*}
   i.e.,  $C_{1} |\xi_{k}| \leq \delta_{K}(y) \leq C_{2} |\xi_{k}|$ for some  $C_{1}, ~ C_{2}>0$ very close to  $1$, where  $T((\xi_{k}, \xi_{d-k})) = y$. \\
\begin{figure}[h!]
\begin{center}
\begin{tikzpicture}
\node at (-1,.5) {$B^{k}_{1}(0)$};
\node at (1.5,1.5) {$(0,1)^{d-k}$};

\draw[dashed] (0,1) -- (3,1);
\draw[dashed] (0,-1) -- (3,-1);

\draw (0,0) ellipse [y radius=1, x radius=0.5];
\draw (3,0) ellipse [y radius=1, x radius=0.5];
\fill[blue!20, opacity=0.5] (0,0) ellipse [y radius=1, x radius=0.5];
\fill[blue!20, opacity=0.5] (3,0) ellipse [y radius=1, x radius=0.5];

\draw[->] (-2,0) -- (4,0) node[right] {$\mathbb{R}^{d-k}$};
\draw[->] (0,-2) -- (0,2) node[above] {$\mathbb{R}^{k}$};
\end{tikzpicture}
\end{center}
\caption{$Q= B^{k}_{1}(0) \times (0,1)^{d-k}$.}
\label{fig:myfigure1}
\end{figure}
\end{defn}

\smallskip

We also define the fractional Sobolev critical exponent for  $sp<d$ as $$ p^{*}_{s} := \frac{dp}{d-sp}.$$ Consider an open set  $\Omega$ in $\mathbb{R}^{d},~ d \geq 2, ~ s \in (0,1)$, and let  $K$ be a compact subset of  $\Omega$, we define the space  $W^{s,p}_{0}(\Omega \setminus K)$, consisting of closure of  $W^{s,p} (\Omega) \cap C(\Omega)$ functions (see  \eqref{defn Wsp} for the definition of  $W^{s,p}(\Omega)$) that vanishes in a neighbourhood of $K$ with respect to the norm  $\| \cdot \|_{W^{s,p}(\Omega)}$ (see  \eqref{defn norm} for the definition of  $\| \cdot \|_{W^{s,p}(\Omega)}$), 
\begin{equation*}
    W^{s,p}_{0}(\Omega \setminus K) := \text{closure} \left( \{ u \in W^{s,p} (\Omega) \cap C(\Omega) : u=0 \ \text{in a neighbourhood of K} \} \right).
\end{equation*}

\smallskip

The following theorem is our main result. It establishes the complete range of fractional Hardy inequalities for the case $sp = k$ on a bounded Lipschitz domain $\Omega \subset \mathbb{R}^{d}$, $d \ge 2$, with a smooth boundary $K$ of codimension $k$ of class $C^{0,1}$, where $k = 2, \dots, d$, together with an optimal logarithmic weight function. Throughout this article, we assume $s \in (0,1)$ and $p > 1$, and in the case $k = d$, we assume $0 \in \Omega$ and $K = \{0 \}$. We prove the following theorem for the case $sp = k$:

\begin{thm}\label{theorem 1 sp=k}
  Let $\Omega$ be a bounded Lipschitz domain in $\mathbb{R}^{d}$ with $d \geq 2$, and let $K \subset \Omega$ be a compact set of codimension $k$ of class $C^{0,1}$, where $1 < k \leq d$ and $k \in \mathbb{N}$. Assume that $sp = k$, $\delta_{K}(x) < R$ for all $x \in \Omega$, for some $R > 0$, and $\alpha = d + (k - d) \frac{\tau}{p}$. When $k = d$, we assume that $0 \in \Omega$ and $K = \{ 0 \}$. Then, the following fractional Hardy-type inequality
    \begin{multline}
     \left(   \bigintsss_{\Omega} \frac{|u(x)|^{\tau}}{\delta^{\alpha}_{K}(x) \ln^{\beta} \left( \frac{2R}{\delta_{K}(x)}  \right)} \, dx  \right)^{\frac{1}{\tau}} \leq C \left( \int_{\Omega} \int_{\Omega}  \frac{|u(x)-u(y)|^{p}}{|x-y|^{d+sp}} \, dx \, dy + \int_{\Omega} |u(x)|^{p} \, dx \right)^{\frac{1}{p}}, \\ \forall \ u \in W^{s,p}_{0}(\Omega \setminus K),
    \end{multline}
  where  $C=C(d,p,s, \tau, R, K,\Omega)>0$,  holds true in each of the following cases: 
    \begin{itemize}
        \item[(1)] if  $1<k<d$ and  $\tau =p$, then  $\beta =p$.
        \item[(2)] if  $1<k<d$ and  $\tau \in \left(p, \frac{dp+1}{1-(k-d)} \right]$, then  $\beta = \tau -1$.
        \item[(3)] if  $1<k<d$ and  $\tau \in \left( \frac{dp+1}{1-(k-d)}, p^{*}_{s} \right]$, then  $\beta = dp + (k-d)\tau$.
        \item[(4)] if  $k=d$ and  $\tau \geq p$, then  $\beta= \tau$.
    \end{itemize}
   The weight function is optimal for the case  $\tau =p$.
\end{thm}

\smallskip

The aforementioned theorem can be equivalently expressed as 
\begin{equation*}
  \left(  \bigintsss_{\Omega} \frac{|u(x)-(u)_{\Omega}|^{\tau}}{\delta^{\alpha}_{K}(x) \ln^{\beta} \left( \frac{2R}{\delta_{K}(x)}  \right)} \, dx \right)^{\frac{1}{\tau}} \leq C \left( \int_{\Omega} \int_{\Omega}  \frac{|u(x)-u(y)|^{p}}{|x-y|^{d+sp}} \, dx \, dy \right)^{\frac{1}{p}} , \quad \forall \ u \in W^{s,p}_{0}(\Omega \setminus K),
\end{equation*}
where  $(u)_{\Omega}$ denotes the average of  $u$ over  $\Omega$ and defined by  $(u)_{\Omega} = \frac{1}{|\Omega|} \int_{\Omega} u(y) dy$. The above fractional Hardy-type inequality is obtained by using the above theorem together with the fractional Poincar\'e inequality (see \eqref{poincare}). When $sp=k$, we have $W^{s,p}_{0}(\Omega \setminus K)= W^{s,p}(\Omega)$ for a smooth compact set $K$ of codimension $k$ (see Lemma  \ref{density lemma}). Therefore, the presence of the optimal function  $\frac{1}{\ln^{p}}$ (for the case  $\tau=p$) on the left-hand side and $L^{p}$ term on the right-hand side in Theorem  \ref{theorem 1 sp=k} are crucial since constant functions are in $W^{s,p}_{0}(\Omega \setminus K)$ for the case  $sp=k$, and the function  $\frac{1}{\delta^{k}_{K}(x)}$ (for the case  $\tau=p$) is not integrable near $K$. Concluding this, we present the following remark:

\begin{rem}
    \noindent We prove that  $W_{0}^{s,p}(\Omega \setminus K)= W^{s,p}(\Omega)$ for the case  $sp=k$, where $1<k<d, ~ k \in \mathbb{N}$ (see Lemma  \ref{density lemma}). Therefore, the additional  $L^{p}$ term on the right-hand side of Theorem  \ref{theorem 1 sp=k} and  the integrability of  $\frac{1}{\delta^{\alpha}_{K}(x) \ln^{\beta} \left( \frac{2R}{\delta_{K}(x)}  \right)}$ are crucial for establishing the appropriate fractional Hardy inequality, as it accounts for the presence of constant functions in  $W^{s,p}_{0}(\Omega \setminus K)$ for the case  $sp=k$. 
\end{rem}

\begin{rem}
    \noindent For $1<k<d, ~  k \in \mathbb{N}$, and for any $\tau \in \left(p, \frac{dp+1}{1-(k-d)} \right]$, we improved the logarithmic weight function presented in the Theorem  \ref{theorem 1 sp=k} with $\beta = \tau -1$ whereas for $\tau=p$, it is $\beta=p$. Furthermore, for any $\tau \in \left( \frac{dp+1}{1-(k-d)}, p^{*}_{s} \right]$, the logarithmic weight function results from interpolation and the fractional Sobolev inequality which is done in Lemma  \ref{flat case sp=k 2} for the flat boundary case. For general bounded Lipschitz domain with smooth compact set $K$ follows from patching argument.
\end{rem}

\smallskip

Let  $K_{1}$ be any set such that  $K_{1} \subset K$, where  $K$ is a compact set of codimension  $k$ of class  $C^{0,1}$. If we consider Theorem  \ref{theorem 1 sp=k} with this compact set  $K$, we obtain a fractional Hardy inequality with singularity on  $K_{1}$ and  $sp=k$.  Therefore, we present the following corollary which is a consequence of Theorem  \ref{theorem 1 sp=k}:

\begin{cor}\label{cor 1}
    Let  $\Omega$ be a bounded Lipschitz domain in $\mathbb{R}^{d}$ with $  d \geq 2$, and let $K_{1} \subset K$ be any set such that  $K \subset \Omega$, where  $K$ a compact set of codimension  $k$ of class  $C^{0,1}$,  $1<k< d, ~  k \in \mathbb{N}$. Assume that  $sp=k$,  $\delta_{K}(x) < R$ for all  $x \in \Omega$, for some  $R>0$, and  $\alpha= d + (k-d)\frac{\tau}{p}$. Then, the following fractional Hardy-type inequality
    \begin{multline}
     \left(   \bigintsss_{\Omega} \frac{|u(x)|^{\tau}}{\delta^{\alpha}_{K_{1}}(x) \ln^{\beta} \left( \frac{2R}{\delta_{K_{1}}(x)}  \right)} \, dx \right)^{\frac{1}{\tau}} \leq C\left( \int_{\Omega} \int_{\Omega}  \frac{|u(x)-u(y)|^{p}}{|x-y|^{d+sp}} \, dx \, dy + \int_{\Omega} |u(x)|^{p} \, dx \right)^{\frac{1}{p}}, \\ \forall \ u \in W^{s,p}_{0}(\Omega \setminus K),
    \end{multline}
    holds true in each of the following cases: 
    \begin{itemize}
        \item[(1)] if  $\tau =p$, then  $\beta =p$.
        \item[(2)] if  $\tau \in \left(p, \frac{dp+1}{1-(k-d)} \right]$, then  $\beta = \tau -1$.
        \item[(2)] if  $\tau \in \left( \frac{dp+1}{1-(k-d)}, p^{*}_{s} \right]$, then  $\beta = dp + (k-d)\tau$.
    \end{itemize}
\end{cor}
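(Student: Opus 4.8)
The plan is to deduce this corollary directly from Theorem \ref{theorem 1 sp=k} by exploiting the monotonicity of the weight in the distance function, together with the geometric comparison $\delta_{K_1} \ge \delta_K$ that holds whenever $K_1 \subset K$. First I would observe that since $K_1 \subset K$, for every $x \in \Omega$ we have
\begin{equation*}
    \delta_{K_1}(x) = \inf_{y \in K_1} |x-y| \ge \inf_{y \in K} |x-y| = \delta_K(x),
\end{equation*}
and in particular $\delta_{K_1}(x) < R$ for all $x \in \Omega$ as well, so the logarithmic factor is well-defined (its argument exceeds $2$). I would apply Theorem \ref{theorem 1 sp=k} verbatim with the compact set $K$ (which is admissible, being of codimension $k$ and class $C^{0,1}$), obtaining, for all $u \in W^{s,p}_0(\Omega \setminus K)$ and with $\alpha = d + (k-d)\tfrac{\tau}{p}$ and the appropriate $\beta$ in each case,
\begin{equation*}
    \left( \bigintsss_{\Omega} \frac{|u(x)|^{\tau}}{\delta^{\alpha}_{K}(x) \ln^{\beta}\left( \frac{2R}{\delta_K(x)} \right)} \, dx \right)^{\frac{1}{\tau}} \le C \, \|u\|_{W^{s,p}(\Omega)}.
\end{equation*}

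The key step is then a pointwise comparison of the two weight functions. Since $1 < k < d$ forces $\alpha > 0$ (indeed $\alpha = d + (k-d)\tfrac{\tau}{p}$; for $\tau$ in the stated ranges one checks $\alpha>0$, the borderline being governed precisely by the upper endpoint $\tau = \frac{dp+1}{1-(k-d)}$ and the Sobolev exponent), the map $t \mapsto t^{-\alpha}$ is decreasing on $(0,\infty)$, hence $\delta_{K_1}(x)^{-\alpha} \le \delta_K(x)^{-\alpha}$. Likewise $t \mapsto \ln(2R/t)$ is decreasing and positive on $(0,R)$, so if $\beta \ge 0$ — which holds in all three cases, since $\beta = p > 0$, $\beta = \tau - 1 > 0$, and $\beta = dp + (k-d)\tau \ge 0$ on the indicated interval — we get $\ln^{\beta}(2R/\delta_K(x)) \le \ln^{\beta}(2R/\delta_{K_1}(x))$. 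Multiplying, the full weight satisfies
\begin{equation*}
    \frac{1}{\delta^{\alpha}_{K_1}(x) \ln^{\beta}\left( \frac{2R}{\delta_{K_1}(x)} \right)} \le \frac{1}{\delta^{\alpha}_{K}(x) \ln^{\beta}\left( \frac{2R}{\delta_{K}(x)} \right)}, \qquad x \in \Omega.
\end{equation*}

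Integrating $|u(x)|^{\tau}$ against this inequality and taking $\tau$-th roots yields
\begin{equation*}
    \left( \bigintsss_{\Omega} \frac{|u(x)|^{\tau}}{\delta^{\alpha}_{K_1}(x) \ln^{\beta}\left( \frac{2R}{\delta_{K_1}(x)} \right)} \, dx \right)^{\frac{1}{\tau}} \le \left( \bigintsss_{\Omega} \frac{|u(x)|^{\tau}}{\delta^{\alpha}_{K}(x) \ln^{\beta}\left( \frac{2R}{\delta_{K}(x)} \right)} \, dx \right)^{\frac{1}{\tau}} \le C \, \|u\|_{W^{s,p}(\Omega)},
\end{equation*}
which is exactly the asserted inequality, with the same constant $C = C(\Omega, K, s, p, \tau, R)$. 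I do not anticipate a genuine obstacle here; the only point requiring care is verifying the sign conditions $\alpha > 0$ and $\beta \ge 0$ throughout the three parameter regimes so that both weight factors are genuinely monotone in the claimed direction — this is a short arithmetic check using $1 < k < d$ and the explicit endpoints, and if any borderline $\beta = 0$ occurs the corresponding factor is simply $1$ and the comparison is trivial for that factor.
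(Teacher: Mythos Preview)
Your argument contains a sign error that breaks the whole pointwise comparison. You correctly note that $\delta_{K_1}\ge\delta_K$ gives $\delta_{K_1}^{-\alpha}\le\delta_K^{-\alpha}$, but the logarithmic factor goes the \emph{other} way: since $t\mapsto\ln(2R/t)$ is decreasing and $\delta_{K_1}\ge\delta_K$, one has $\ln^{\beta}\!\bigl(2R/\delta_{K_1}(x)\bigr)\le\ln^{\beta}\!\bigl(2R/\delta_{K}(x)\bigr)$, the reverse of what you wrote. Hence the two factors in the denominator $\delta^{\alpha}\ln^{\beta}(2R/\delta)$ move in opposite directions as $\delta$ increases, and the product is \emph{not} globally monotone on $(0,R)$. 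Concretely, $g(t)=t^{\alpha}\ln^{\beta}(2R/t)$ satisfies $g'(t)>0$ only for $t<2Re^{-\beta/\alpha}$; beyond that threshold $g$ decreases, and one can easily produce points $x$ with $\delta_K(x)$ small and $\delta_{K_1}(x)$ close to $R$ for which the weight built from $K_1$ is \emph{larger} than the one built from $K$. So the global pointwise bound you assert simply fails.

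This is exactly what the paper's proof accounts for: it uses the monotonicity of $g$ only on the set $\Omega'=\{x:\max\{\delta_K(x),\delta_{K_1}(x)\}<2Re^{-\beta/\alpha}\}$, where the desired comparison $g(\delta_{K_1})\ge g(\delta_K)$ is valid, and then applies Theorem~\ref{theorem 1 sp=k}. On the complementary set $\Omega\setminus\Omega'$ the distance $\delta_{K_1}$ is bounded away from zero, so the $K_1$-weight is bounded and one controls $\int_{\Omega\setminus\Omega'}|u|^{\tau}$ by the fractional Sobolev inequality $\|u\|_{L^\tau(\Omega)}\le C\|u\|_{W^{s,p}(\Omega)}$. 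Your argument can be repaired along these lines, but as written it omits this splitting and the Sobolev step, and the claimed global monotonicity is false. A secondary point: you assert $\delta_{K_1}(x)<R$ follows from $\delta_K(x)<R$, but $\delta_{K_1}\ge\delta_K$, so this implication also fails; one needs $R$ chosen so that $\delta_{K_1}<R$ on $\Omega$ for the logarithm to be meaningful.
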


\smallskip

The proof of the above corollary, using Theorem \ref{theorem 1 sp=k}, is provided in Subsection \ref{proof of cor 1}. The main idea is to obtain an inequality $\delta^{\alpha}_{K_{1}}(x) \ln^{\beta} \left( \frac{2R}{\delta_{K_{1}}(x)} \right) \geq \delta^{\alpha}_{K}(x) \ln^{\beta} \left( \frac{2R}{\delta_{K}(x)} \right)$ in a small neighbourhood of $K$, and then using Theorem  \ref{theorem 1 sp=k} and the obvious fractional Sobolev inequality, the above corollary is established.

\smallskip

Furthermore, if  $K$ be any set in  $\mathbb{R}^{d}$ such that  $K_{1}$ and  $K_{2}$ are two compact sets in  $\mathbb{R}^{d}$ of codimension  $k_{1}$ and  $k_{2}$ of class  $C^{0,1}$ respectively with  $k_{1} <  k_{2}$ and  $K \subset K_{1} \cup K_{2}$. Assume,  $\gamma = \theta k_{1} + (1- \theta) k_{2}$ for some $\theta \in (0,1)$. Additionally, we also assume that in a small neighbourhood of  $K_{1} \cup K_{2}$ there exists a constant  $C>0$ such that
\begin{equation*}
    \delta^{\gamma}_{K} \ln^{p} \left( \frac{2R}{\delta_{K}(x)} \right)  \geq C \delta^{\theta k_{1}}_{K_{1}} \ln^{\theta p} \left( \frac{2R}{\delta_{K_{1}}(x)} \right) \delta^{(1-\theta) k_{2}}_{K_{2}} \ln^{(1-\theta) p} \left( \frac{2R}{\delta_{K_{2}}(x)} \right),
\end{equation*}
where  $\max \{ \delta_{K_{1}}(x), \delta_{K_{2}}(x) \} < R$ for all  $x \in \Omega$, for some  $R>0$. Then using H$\ddot{\text{o}}$lder's inequality and Theorem  \ref{theorem 1 sp=k} for the compact set  $K_{1}$ with  $s_{1}p = k_{1}$  and  $K_{2}$ with  $s_{2}p=k_{2}$, we again present the corollary that follows from Theorem  \ref{theorem 1 sp=k}. In this corollary, we consider the norm $\| \cdot \|_{W^{s,p}(\Omega)}$ on the fractional Sobolev space, as defined in  \eqref{defn norm}.

\begin{cor}\label{cor 2}
      Let  $\Omega$ be a bounded Lipschitz domain in $\mathbb{R}^{d}$ with $  d \geq 2$,  and let $K \subset K_{1} \cup K_{2}$ be any set such that  $K_{1} \cup K_{2} \subset  \Omega$, where  $K_{1},   K_{2}$ be compact sets of codimension  $k_{1},   k_{2}$ of class  $C^{0,1} $ respectively such that  $k_{1}<k_{2}$,  $k_{1},  k_{2} \in \{2, \dots, d-1 \}$. Let  $sp=\gamma$ such that  $\gamma =\theta k_{1}+ (1- \theta)k_{2}$ for some  $\theta \in (0,1)$ and  $s= \theta s_{1} + (1- \theta)s_{2}$ for some  $s_{1},   s_{2} \in (0,1)$. Assume that  $\max \{ \delta_{K_{1}}(x), \delta_{K_{2}}(x) \} < R$ for all  $x \in \Omega$, for some  $R>0$. Then there exists a constant  $C>0$ such that
    \begin{multline}
       \left( \bigintsss_{\Omega} \frac{|u(x)|^{p}}{\delta^{\gamma}_{K}(x) \ln^{p} \left( \frac{2R}{\delta_{K}(x)}  \right)} \, dx \right)^{\frac{1}{p}} \leq C \|u\|^{\theta}_{W^{s_{1},p}(\Omega)} \|u\|^{1-\theta}_{W^{s_{2},p}(\Omega)} , \\ \forall \ u \in W^{s_{1},p}_{0}(\Omega \setminus K_{1}) \cap W^{s_{2},p}_{0} (\Omega \setminus K_{2}).
    \end{multline}
\end{cor}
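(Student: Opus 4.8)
The plan is to deduce Corollary~\ref{cor 2} from two applications of Theorem~\ref{theorem 1 sp=k} --- one to $K_{1}$, one to $K_{2}$ --- glued together by H\"older's inequality, after splitting $\Omega$ into a part close to $K_{1}\cup K_{2}$ and a part far from it. First comes the bookkeeping. Since $k_{i}\in\{2,\dots,d-1\}$ we have $1<k_{i}<d$, so Theorem~\ref{theorem 1 sp=k}(1) applies to the codimension-$k_{i}$ set $K_{i}$ with the choice $s_{i}p=k_{i}$ and $\tau=p$; this forces $\alpha=d+(k_{i}-d)\tfrac{\tau}{p}=k_{i}$ and $\beta=p$, so that
\begin{equation*}
\int_{\Omega}\frac{|u(x)|^{p}}{\delta_{K_{i}}^{k_{i}}(x)\,\ln^{p}\!\big(\tfrac{2R}{\delta_{K_{i}}(x)}\big)}\,dx\;\leq\;C\,\|u\|_{W^{s_{i},p}(\Omega)}^{p},\qquad u\in W^{s_{i},p}_{0}(\Omega\backslash K_{i}),\quad i=1,2.
\end{equation*}
The hypothesis $u\in W^{s_{1},p}_{0}(\Omega\backslash K_{1})\cap W^{s_{2},p}_{0}(\Omega\backslash K_{2})$ is exactly what is needed to invoke both of these.

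Next, fix a neighbourhood $\mathcal{N}$ of $K_{1}\cup K_{2}$ on which the assumed pointwise comparison of weights holds, and split $\int_{\Omega}=\int_{\mathcal{N}}+\int_{\Omega\backslash\mathcal{N}}$. On $\Omega\backslash\mathcal{N}$ one has $\delta_{K}\geq\delta_{K_{1}\cup K_{2}}\geq c>0$ (here the inclusion $K\subset K_{1}\cup K_{2}$ is used), while $\delta_{K}$ is bounded above because $\overline{\Omega}$ is bounded; taking $R$ large enough that $\delta_{K}<R$ on $\Omega$, the weight $\big(\delta_{K}^{\gamma}\ln^{p}(2R/\delta_{K})\big)^{-1}$ is then bounded on $\Omega\backslash\mathcal{N}$, whence
\begin{equation*}
\int_{\Omega\backslash\mathcal{N}}\frac{|u|^{p}}{\delta_{K}^{\gamma}\ln^{p}(2R/\delta_{K})}\,dx\;\leq\;C\,\|u\|_{L^{p}(\Omega)}^{p}\;\leq\;C\,\|u\|_{W^{s_{1},p}(\Omega)}^{p\theta}\,\|u\|_{W^{s_{2},p}(\Omega)}^{p(1-\theta)},
\end{equation*}
the last step using $\|u\|_{L^{p}(\Omega)}\leq\|u\|_{W^{s_{i},p}(\Omega)}$ for each $i$ together with $p\theta+p(1-\theta)=p$. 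On $\mathcal{N}$ I would first apply the weight comparison and then the generalized H\"older inequality with exponents $1/\theta$ and $1/(1-\theta)$, writing the integrand as the product of $\big(|u|^{p}/(\delta_{K_{1}}^{k_{1}}\ln^{p}(2R/\delta_{K_{1}}))\big)^{\theta}$ and $\big(|u|^{p}/(\delta_{K_{2}}^{k_{2}}\ln^{p}(2R/\delta_{K_{2}}))\big)^{1-\theta}$ (legitimate since $|u|^{p\theta}|u|^{p(1-\theta)}=|u|^{p}$); the two resulting factors are controlled by the displayed Hardy inequalities above, giving again the bound $C\,\|u\|_{W^{s_{1},p}(\Omega)}^{p\theta}\|u\|_{W^{s_{2},p}(\Omega)}^{p(1-\theta)}$. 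Adding the two pieces and taking $p$-th roots yields the corollary.

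The only genuine subtlety --- the \emph{main obstacle} --- is the one already built into the hypotheses: the weight comparison is postulated only in a neighbourhood of $K_{1}\cup K_{2}$, so the complementary region must be disposed of by the crude $L^{p}$ bound, which is legitimate precisely because the inclusion $K\subset K_{1}\cup K_{2}$ keeps $\delta_{K}$ bounded away from zero there; one also needs $R$ large enough that every logarithmic factor stays positive on $\Omega$. Beyond that, the argument is pure bookkeeping (matching $\alpha$ and $\beta$ in Theorem~\ref{theorem 1 sp=k}) together with one application of the generalized H\"older inequality.
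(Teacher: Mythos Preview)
Your proposal is correct and follows exactly the approach the paper outlines in the paragraph preceding Corollary~\ref{cor 2}: apply Theorem~\ref{theorem 1 sp=k} separately to $K_{1}$ (with $s_{1}p=k_{1}$) and $K_{2}$ (with $s_{2}p=k_{2}$), invoke the assumed pointwise weight comparison near $K_{1}\cup K_{2}$, and combine via H\"older with exponents $1/\theta$, $1/(1-\theta)$. Your explicit splitting into $\mathcal{N}$ and $\Omega\backslash\mathcal{N}$ and the handling of the far region by the crude $L^{p}$ bound mirror the proof of Corollary~\ref{cor 1}; the paper itself gives only the sketch, so you have filled in precisely the details it leaves implicit.
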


\smallskip

The next two results establish the fractional Hardy inequalities with smooth boundary $K$ of codimension  $k$ of clas  $C^{0,1}$ for the case  $sp \neq k$. The following theorem establishes the fractional Hardy inequality for the case  $sp<k$:

\begin{thm}\label{theorem 2 sp<k}
    Let  $\Omega$ be a bounded Lipschitz domain in $\mathbb{R}^{d}$  with $d \geq 3$, and let  $K \subset \Omega$ be a compact set of codimension  $k$ of class  $C^{0,1}, ~  1<k<d,  ~ k \in \mathbb{N}$,  $sp<k$, and  $\alpha= d + (sp-d)\frac{\tau}{p}$. Then, for any  $\tau \in [p, p^{*}_{s}]$, there exists a constant  $C= C(d, p,s, \tau,k,K,\Omega)>0$ such that 
    \begin{multline}
     \left(   \bigintssss_{\Omega} \frac{|u(x)|^{\tau}}{\delta^{\alpha}_{K}(x)} \, dx \right)^{\frac{1}{\tau}} \leq C \left( \int_{\Omega} \int_{\Omega}  \frac{|u(x)-u(y)|^{p}}{|x-y|^{d+sp}} \, dx \, dy + \int_{\Omega} |u(x)|^{p} \, dx \right)^{\frac{1}{p}} , \\ \forall \ u \in W^{s,p}_{0}(\Omega \setminus K).
    \end{multline}
\end{thm}

\smallskip

We also establish that for any  $p>k$,  $W^{s,p}_{0}(\Omega \setminus K) = W^{s,p}(\Omega)$ when  $sp<k$ in Remark  \ref{rem sp<k}. Therefore, the above theorem does not hold true for constant functions in  $W^{s,p}_{0}(\Omega \setminus K)$ whenever  $p>k$ and  $sp<k$ unless the additional  $L^{p}$ term is included on the right-hand side. Therefore, the presence of extra  $L^{p}$ term on the right-hand side of above theorem is essential for establishing our main result in the case  $sp<k$.

\smallskip

The next theorem establishes a similar type of fractional Hardy inequality for the case  $sp >k$. In general, our next theorem is the following: 

\begin{thm}\label{theorem 3 sp>k}
     Let  $\Omega$ be a bounded Lipschitz domain in $\mathbb{R}^{d}$ with  $d \geq 3$, and let  $K \subset \Omega$ be a compact set of codimension  $k$ of class  $C^{0,1}$, $1<k<d, ~  k \in \mathbb{N}$, and  $sp>k$. Then, the following fractional Hardy-type inequality
    \begin{equation}
       \left( \bigintssss_{\Omega} \frac{|u(x)|^{\tau}}{\delta^{\alpha}_{K}(x)} \, dx \right)^{\frac{1}{\tau}} \leq C \left( \int_{\Omega} \int_{\Omega}  \frac{|u(x)-u(y)|^{p}}{|x-y|^{d+sp}} \,  dx \, dy \right)^{\frac{1}{p}} , \quad \forall \ u \in W^{s,p}_{0}(\Omega \setminus K),
    \end{equation}
   where  $C=C(d, p,s, \tau,k, K,\Omega)>0$, holds true in each of the following cases:
    \begin{itemize}
        \item[(1)] if  $sp<d$ and  $\tau \in [p, p^{*}_{s}]$, then  $\alpha = d+ (sp-d) \frac{\tau}{p}$.
        \item[(2)] if  $sp=d$ and  $\tau \geq p$, then  $\alpha = sp$.
        \item[(3)] if  $sp>d$ and  $\tau =p$, then  $\alpha = sp$.
        \end{itemize}
\end{thm}

\smallskip

Theorem  \ref{theorem 2 sp<k} and Theorem  \ref{theorem 3 sp>k} generalise fractional Hardy inequality with singularity on compact set  $K$ of codimension  $k$ of class $C^{0,1}$ for suitable  $\tau$ depending on  $s$ and  $p$. However, the generalisation is straightforward for the case  $sp<d$. It suffices to establish the case  $\tau=p$. For suitable  $\tau$, this follows from interpolation and fractional Sobolev inequality, i.e.,  H$\ddot{\text{o}}$lder's inequality, fractional Hardy inequality with singularity on smooth compact set  $K$ for the case  $\tau=p$ and fractional Sobolev inequality which is done in Subsection  \ref{proof of theorem 3} in the proof of Theorem  \ref{theorem 3 sp>k}.

\begin{rem}
    \noindent Corollary  \ref{cor 1} and Corollary  \ref{cor 2} are obtained from Theorem  \ref{theorem 1 sp=k}. Analogous corollaries can be established for the case  $sp \neq k$, where $ 1<k<d, ~ k \in \mathbb{N}$, by applying Theorem  \ref{theorem 2 sp<k} and Theorem  \ref{theorem 3 sp>k}.
\end{rem}

In our main results, we consider the case  $K \subset \Omega$, where  $\Omega$ is a bounded Lipschitz domain and $K$ is a compact set of codimension  $k$ of class  $C^{0,1}$. Using similar techniques, these results can also be extended to the case  $K \subset \overline{\Omega}$ with suitable choice of  $K$.

\smallskip

Several related developments on Hardy, Hardy–Rellich, and Poincar\'e-type identities across different geometric settings, such as Carnot groups, Bessel-pair setups, upper half-spaces, hyperbolic spaces, Dunkl operators, and mean-distance Hardy inequalities, are available in \cite{Berchio2022,Nguyen2022,Flynn2021,Flynn2025,Ghoussoub2011,Lam2019,Nguyen2025,Wang2022} and the references therein.

\subsection*{Application:} Fractional Hardy inequalities with singularities on submanifolds have applications in studying regional fractional Laplacian weighted problems. For example, consider the following fractional $p$-Laplacian weighted problem:
\begin{equation}\label{problem}
    \begin{dcases}
   (- \Delta_{p})^{s}u(x) = \lambda \frac{|u(x)|^{p-2}u(x)}{\delta^{sp}_{K}(x)}, & x \in \Omega \setminus K \\
        u(x)=0, & x \in  \mathbb{R}^{d} \setminus (\Omega \setminus K),
    \end{dcases}
\end{equation}
where $\lambda>0$. Here, $\Omega \subset \mathbb{R}^{d}$ is a bounded Lipschitz domain and $K \subset \Omega$ is a compact set of codimension $k$ of class $C^{0,1}$. Assuming $k<sp<d$ and applying Theorem \ref{theorem 3 sp>k}, it would be interesting to study the above problem.

\smallskip

For additional literature on Hardy-type inequalities, we refer to  \cite{adi2002, adi2009, tertikas2003, debdip2020, Brezis1997, Brezis19971, sandeep2008, delpino, lu2022, lu20223, lulam, Nguyen2019} and to the works mentioned there in.
 For the recent development of fractional Hardy-type inequalities and other related inequalities, we refer to the work of  \cite{Adi2025, csato, Cabre2022, lu2, lu2004, dyda2022sharp, frank2008,  Frank2010, squassina2018}.

\smallskip

The article is organized in the following way: In Section  \ref{preliminaries}, we present preliminary lemmas and notation that will be utilized to prove our main results. In section  \ref{The flat boundary case}, we prove our main results for the flat boundary case. We assume the domain $\Omega= B^{k}_{1}(0) \times (-n,n)^{d-k}$, where $B^{k}_{1}(0)$ is a unit ball with center $0$ in $\mathbb{R}^{k}, ~  n \in \mathbb{N}$ and the compact set $K=\{ (x_{k}, x_{d-k}) : x_{k}=0 \ \text{and} \ x_{d-k} \in [-n,n]^{d-k} \}$. We then prove our main results with this $\Omega$ and compact set $K$. Section  \ref{proof of main result} contains the proof of our main results for general bounded Lipschitz domain with a smooth compact set. In section  \ref{optimality section}, we present the proof of optimality of weight function for Theorem  \ref{theorem 1 sp=k}. 

\section{Notation and Preliminaries}\label{preliminaries} 
In this section, we introduce the notation, definitions, and preliminary results that will be used throughout the article. Throughout this article, we shall use the following notation: 
\begin{itemize}
\item for a measurable set  $\Omega \subset \mathbb{R}^d, ~ (u)_{\Omega}$ will denote the average of the function  $u$ over  $\Omega$, i.e., 
\begin{equation*}
    (u)_{\Omega} :=  \frac{1}{|\Omega|} \int_{\Omega} u(x) \,  dx = \fint_{\Omega} u(x) \, dx. 
\end{equation*}
Here,  $|\Omega|$ denotes the Lebesgue measure of  $\Omega$.
\item we denote by  $B^{k}_{r}(x)$, an open ball of radius  $r$ and center  $x$ in  $\mathbb{R}^{k}$, and for simplicity we denote $B_{r}(x)$, an open ball in $\mathbb{R}^{d}$.  $\mathbb{S}^{d-1}$ denotes the boundary of the open ball of radius $1$ centered at $0$ in  $\mathbb{R}^{d}$. 
\item the parameter  $s$ will always be understood to be in  $(0,1)$.
\item any point  $x \in \mathbb{R}^d$ is represented as  $x=(x_{k},x_{d-k})$, where  $x_{k} \in \mathbb{R}^{k}$ and  $x_{d-k} \in \mathbb{R}^{d-k}$.
\item for any  $f, ~ g: \Omega ( \subset \mathbb{R}^d) \to \mathbb{R}$, we denote  $f \sim g$ if there exists  $C_{1}, ~ C_{2}>0$ such that  $C_{1}g(x) \leq f(x) \leq C_{2} g(x)$ for all  $x \in \Omega$.
\item  $C>0$ will denote a generic constant that may change from line to line.
\end{itemize}

\smallskip

Let  $\Omega$ be an open set in $\mathbb{R}^{d}$ and  $s \in (0,1)$. For any  $p \in [1, \infty)$, define the fractional Sobolev space
\begin{equation}\label{defn Wsp}
W^{s, p}(\Omega) := \left\{    u \in L^{p}(\Omega) : \int_{\Omega} \int_{\Omega}  \frac{|u(x)-u(y)|^{p}}{|x-y|^{d+sp}} \, dx \, dy < \infty \right\},
\end{equation}
endowed with the norm
\begin{equation}\label{defn norm}
    \|u\|_{W^{s, p}(\Omega)} = \left(  [u]^{p}_{W^{s, p}(\Omega)} + \|u\|^{p}_{L^{p}(\Omega)}  \right)^{\frac{1}{p}},
\end{equation}
where 
\begin{equation*}
    [u]_{W^{s, p}(\Omega)} := \left( \int_{\Omega} \int_{\Omega}  \frac{|u(x)-u(y)|^{p}}{|x-y|^{d+sp}} \,  dx \, dy \right)^{\frac{1}{p}} 
\end{equation*}
is the so-called Gagliardo seminorm. Denote by  $W^{s, p}_{0}(\Omega)$, the closure of  $C^{\infty}_{c}(\Omega)$ with respect to the norm  $\| \cdot \|_{W^{s, p}(\Omega)}$.
\smallskip

One has, for any  $a_{1},  \dots , a_{m} \in \mathbb{R}$ and  $\gamma \geq1$,
\begin{equation}\label{sumineq}
    \sum_{\ell=1}^{m} |a_{\ell}|^{\gamma} \leq  \left( \sum_{\ell=1}^{m} |a_{\ell}| \right)^{\gamma} .
\end{equation}
\smallskip

{\bf Fractional Poincar\'e inequality}  \cite[Theorem 3.9]{edmunds2022}: Let  $p \geq 1$ and  $\Omega$ be a bounded open set in $\mathbb{R}^{d}$. Then there exists a constant  $C = C(d,p,s, \Omega)>0$ such that
\begin{equation}\label{poincare}
    \int_{\Omega} |u(x)-(u)_{\Omega}|^{p} \, dx \leq C [u]^{p}_{W^{s, p}(\Omega)} .
\end{equation} 

\smallskip

The following lemma establishes the fractional Sobolev inequality for bounded Lipschitz domains  $\Omega$ that scale with the parameter  $\lambda>0$. This lemma plays a crucial role in the proof of our main results

\begin{lemma}\label{sobolev} 
   Let  $\Omega$ be a bounded Lipschitz domain in  $\mathbb{R}^{d}$, where $d \geq 1$. Let  $p > 1$ and  $s \in (0,1)$ be such that  $sp \leq d$. Define  $\Omega_{\lambda}: = \left\{ \lambda x : ~ x \in \Omega \right\}$ for  $\lambda>0$, then there exists a positive constant  $C=C(d,p,s, \tau, \Omega)$ such that, for any  $u \in W^{s, p}(\Omega_{\lambda})$, we have
    \begin{equation}
         \left( \fint_{\Omega_{\lambda}} |u(x)-(u)_{\Omega_{\lambda}}|^{\tau } \, dx \right)^{\frac{1}{\tau}}  \leq C \left( \lambda^{sp-d} [u]^{p}_{W^{s, p}(\Omega_{\lambda})} \right)^{\frac{1}{p}}  ,
    \end{equation}
    for any  $\tau \in [p, p^{*}_{s}]$ when  $sp<d$, and  $\tau \geq p$ when  $sp=d$.
\end{lemma}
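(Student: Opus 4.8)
The plan is to obtain the scale-invariant fractional Sobolev--Poincar\'e inequality on $\Omega_\lambda$ by a change of variables reducing everything to the fixed domain $\Omega$, where the standard fractional Sobolev embedding $W^{s,p}(\Omega)\hookrightarrow L^{p^*_s}(\Omega)$ (for $sp<d$) and $W^{s,p}(\Omega)\hookrightarrow L^{\tau}(\Omega)$ for all $\tau\geq p$ (for $sp=d$) is available for bounded Lipschitz domains. First I would record that both the left-hand integral average $\fint_{\Omega_\lambda}$ and the Gagliardo seminorm $[u]_{W^{s,p}(\Omega_\lambda)}$ transform in a controlled way under the dilation $y = \lambda x$: writing $v(x):= u(\lambda x)$ for $x\in\Omega$, one has $(v)_\Omega = (u)_{\Omega_\lambda}$, $\fint_{\Omega_\lambda}|u(y)-(u)_{\Omega_\lambda}|^\tau\,dy = \fint_{\Omega}|v(x)-(v)_\Omega|^\tau\,dx$, and a direct substitution in the double integral gives $[u]_{W^{s,p}(\Omega_\lambda)}^p = \lambda^{d-sp}[v]_{W^{s,p}(\Omega)}^p$. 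So the claimed inequality is equivalent to the $\lambda$-free statement $\big(\fint_\Omega |v-(v)_\Omega|^\tau\big)^{1/\tau}\leq C\,[v]_{W^{s,p}(\Omega)}$ for $v\in W^{s,p}(\Omega)$.

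Next I would prove this $\lambda$-free inequality. Combining the fractional Sobolev embedding on the bounded Lipschitz domain $\Omega$ (which controls $\|v\|_{L^\tau(\Omega)}$ by $\|v\|_{L^p(\Omega)} + [v]_{W^{s,p}(\Omega)}$ for the admissible range of $\tau$) applied to $v-(v)_\Omega$, together with the fractional Poincar\'e inequality \eqref{poincare} which bounds $\|v-(v)_\Omega\|_{L^p(\Omega)}$ by $[v-(v)_\Omega]_{W^{s,p}(\Omega)}=[v]_{W^{s,p}(\Omega)}$, yields
\begin{equation*}
  \|v-(v)_\Omega\|_{L^\tau(\Omega)} \leq C\big(\|v-(v)_\Omega\|_{L^p(\Omega)} + [v]_{W^{s,p}(\Omega)}\big) \leq C\,[v]_{W^{s,p}(\Omega)}.
\end{equation*}
Dividing by $|\Omega|^{1/\tau}$ and absorbing it into $C$ gives the averaged form. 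Finally, undoing the change of variables (and using $|\Omega_\lambda|=\lambda^d|\Omega|$ only implicitly, since we work with normalized averages throughout) produces
\begin{equation*}
  \left(\fint_{\Omega_\lambda}|u-(u)_{\Omega_\lambda}|^\tau\right)^{1/\tau} = \left(\fint_\Omega |v-(v)_\Omega|^\tau\right)^{1/\tau} \leq C\,[v]_{W^{s,p}(\Omega)} = C\big(\lambda^{sp-d}[u]_{W^{s,p}(\Omega_\lambda)}^p\big)^{1/p},
\end{equation*}
which is exactly the assertion, with $C$ depending only on $d,p,s,\Omega$.

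The only genuinely delicate point is bookkeeping the exponents in the scaling of the Gagliardo seminorm: under $y=\lambda x$, $dy = \lambda^d dx$ in each of the two variables and $|y-y'|^{d+sp} = \lambda^{d+sp}|x-x'|^{d+sp}$, so $[u]^p_{W^{s,p}(\Omega_\lambda)} = \lambda^{2d-(d+sp)}[v]^p_{W^{s,p}(\Omega)} = \lambda^{d-sp}[v]^p_{W^{s,p}(\Omega)}$, i.e. $[v]^p_{W^{s,p}(\Omega)} = \lambda^{sp-d}[u]^p_{W^{s,p}(\Omega_\lambda)}$; getting this sign right is what makes the final constant genuinely independent of $\lambda$. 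A secondary subtlety is that the fractional Sobolev embedding constant and the Poincar\'e constant on $\Omega$ must be invoked only on the fixed domain $\Omega$ (never on $\Omega_\lambda$), which is precisely why the reduction by dilation is the right first move; since $\Omega$ is a fixed bounded Lipschitz domain these constants exist and depend only on $d,p,s,\Omega$. No further obstacle is anticipated; the argument is essentially a scaling normalization followed by two cited inequalities.
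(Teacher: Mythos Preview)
Your proposal is correct and follows essentially the same approach as the paper: both reduce to the fixed domain $\Omega$ via the dilation $v(x)=u(\lambda x)$, invoke the fractional Sobolev embedding on $\Omega$ applied to $v-(v)_\Omega$ together with the fractional Poincar\'e inequality \eqref{poincare} to kill the $L^p$ term, and then track the scaling of the Gagliardo seminorm to recover the factor $\lambda^{sp-d}$. The only cosmetic difference is that the paper first derives the inequality on $\Omega$ and then substitutes $u(\lambda x)$, whereas you set up the change of variables first; the content is identical.
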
 
\begin{proof}
  Let  $\Omega$ be a bounded Lipschitz domain in $\mathbb{R}^{d}$. Then from Theorem  $6.7$ of  \cite{di2012hitchhikers}, for any  $\tau \in [p,p^{*}_{s}]$ if  $sp<d$, and from Theorem  $6.10$ of  \cite{di2012hitchhikers}, for any  $\tau \geq p$ if  $sp=d$, we have
\begin{equation}\label{Sobineqq}
      \|u\|_{L^{\tau}(\Omega)} \leq C \|u\|_{W^{s, p}(\Omega)},
  \end{equation}
   where  $C=C(d,p,s,\tau, \Omega)$ is a positive constant. Applying  \eqref{Sobineqq} with  $u-(u)_{\Omega}$ and using  \eqref{poincare}, we have
    \begin{equation*}
        \|u-(u)_{\Omega}\|_{L^{\tau}(\Omega)} \leq C [u]_{W^{s, p}(\Omega)} .
    \end{equation*}
    Therefore,
    \begin{equation}\label{ineqlemma}
       \left(  \fint_{\Omega} |u(x)-(u)_{\Omega}|^{\tau } \, dx \right)^{\frac{1}{\tau}}  \leq C  [u]_{W^{s, p}(\Omega)}  . 
    \end{equation}
    Let us apply the above inequality to  $u(\lambda x)$ instead of  $u(x)$. This gives
    \begin{equation*}
        \left(  \fint_{\Omega}  \Big|u(\lambda x)-\fint_{\Omega} u(\lambda x) \, dx \Big|^{\tau } \, dx \right)^{\frac{1}{\tau}}  \leq C \left( \int_{\Omega} \int_{\Omega} \frac{|u(\lambda x) - u(\lambda y)|^{p}}{|x-y|^{d+sp}} \, dx \, dy  \right)^{\frac{1}{p}}  . 
    \end{equation*}
    Using the fact
    \begin{equation*}
        \fint_{\Omega} u(\lambda x) \, dx = \fint_{\Omega_{\lambda}} u(x) \, dx,
    \end{equation*}
    we have
    \begin{equation*}
       \left(  \fint_{\Omega} |u(\lambda x)-(u)_{\Omega_{\lambda}}|^{\tau } \, dx \right)^{\frac{1}{\tau}}  \leq C \left( \int_{\Omega} \int_{\Omega} \frac{|u(\lambda x) - u(\lambda y)|^{p}}{|x-y|^{d+sp}} \, dx \, dy  \right)^{\frac{1}{p}}  . 
    \end{equation*}
    By changing the variables  $X=\lambda x$ and  $Y= \lambda y$, we obtain 
    \begin{equation*}
         \left( \fint_{\Omega_{\lambda}} |u(x)-(u)_{\Omega_{\lambda}}|^{\tau } \, dx \right)^{\frac{1}{\tau}}  \leq C \left( \lambda^{sp-d} [u]^{p}_{W^{s, p}(\Omega_{\lambda})} \right)^{\frac{1}{p}}  . 
    \end{equation*}
    This finishes the proof of the lemma.
\end{proof}

\smallskip

The next lemma provides a relation between average of $u$ over two disjoint sets. This technical step is very crucial in the proof of the main results.

\begin{lemma}\label{avg}
    Let  $E$ and  $F$ be disjoint sets in  $\mathbb{R}^d$. Then for any  $\tau \geq 1$, one has for some constant  $C >0$ such that
    \begin{equation}
        |(u)_{E} - (u)_{F}|^{\tau} \leq C \frac{|E \cup F|}{\min \{ |E|, |F| \} }  \fint_{E \cup F} |u(x)-(u)_{E \cup F}|^{\tau} \, dx  . 
    \end{equation}
\end{lemma}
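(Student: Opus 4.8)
The plan is to reduce everything to the fractional Poincaré inequality \eqref{poincare} applied on the union $E \cup F$. First I would write the difference of averages as an average of a single quantity: since
\[
  (u)_E - (u)_F = \fint_E \big( u(x) - (u)_{E\cup F} \big)\, dx \; - \; \fint_F \big( u(x) - (u)_{E\cup F} \big)\, dx,
\]
the triangle inequality together with the elementary bound \eqref{sumineq} (in the form $|a+b|^\tau \le 2^{\tau-1}(|a|^\tau+|b|^\tau)$, valid for $\tau\ge 1$) gives
\[
  |(u)_E - (u)_F|^\tau \le 2^{\tau-1}\left( \Big| \fint_E (u - (u)_{E\cup F}) \Big|^\tau + \Big| \fint_F (u - (u)_{E\cup F}) \Big|^\tau \right).
\]

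Next I would estimate each of the two terms by Jensen's inequality on the respective set: for the set $E$,
\[
  \Big| \fint_E \big( u(x) - (u)_{E\cup F} \big)\, dx \Big|^\tau
  \le \fint_E \big| u(x) - (u)_{E\cup F} \big|^\tau dx
  = \frac{1}{|E|} \int_E \big| u(x) - (u)_{E\cup F} \big|^\tau dx,
\]
and since $E \subset E\cup F$ and the integrand is nonnegative, $\int_E \le \int_{E\cup F}$, so this is at most
\[
  \frac{1}{|E|} \int_{E\cup F} \big| u(x) - (u)_{E\cup F} \big|^\tau dx
  = \frac{|E\cup F|}{|E|} \fint_{E\cup F} \big| u(x) - (u)_{E\cup F} \big|^\tau dx.
\]
The same bound holds with $F$ in place of $E$. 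Adding the two and using $\frac{1}{|E|} + \frac{1}{|F|} \le \frac{2}{\min\{|E|,|F|\}}$ yields
\[
  |(u)_E - (u)_F|^\tau \le 2^{\tau} \, \frac{|E\cup F|}{\min\{|E|,|F|\}} \fint_{E\cup F} \big| u(x) - (u)_{E\cup F} \big|^\tau dx,
\]
which is the claimed inequality with $C = 2^\tau$ (absorbed into the generic constant).

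There is no real obstacle here — the statement is purely measure-theoretic and follows from Jensen plus the triangle inequality; the only point requiring a line of care is the step $\int_E |u - (u)_{E\cup F}|^\tau \le \int_{E\cup F} |u - (u)_{E\cup F}|^\tau$, which uses disjointness of $E$ and $F$ only insofar as it guarantees $|E\cup F| = |E| + |F|$ is finite and the sets are genuinely distinct (so that the constant is meaningful); in fact the inequality $\int_E \le \int_{E\cup F}$ needs only $E \subset E \cup F$ and does not even require disjointness. One should assume implicitly that $|E|, |F| > 0$ (otherwise the right-hand side is infinite and the statement is vacuous) and that $u \in L^\tau(E\cup F)$ so that all averages are well defined.
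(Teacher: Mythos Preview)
Your proof is correct and follows essentially the same route as the paper's: subtract and add $(u)_{E\cup F}$, split by the triangle inequality, apply Jensen (the paper phrases this as H\"older with exponents $\tau,\tau'$, which is the same estimate), and then bound each integral by the integral over $E\cup F$ divided by $\min\{|E|,|F|\}$. One small remark: your opening sentence says the plan is to reduce to the fractional Poincar\'e inequality~\eqref{poincare}, but in fact you never use it (nor does the paper) --- the lemma is purely measure-theoretic, and Poincar\'e only enters later when this lemma is combined with Lemma~\ref{sobolev}.
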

\begin{proof}
Let us consider  $|(u)_{E}-(u)_{F}|^{\tau}$, we have
\begin{align*}
    |(u)_{E}-(u)_{F}|^{\tau} & = |(u)_{E} - (u)_{E \cup F} - (u)_{F} + (u)_{E \cup F}|^{\tau} \\ &
        \leq C|(u)_{E} - (u)_{E \cup F}|^{\tau} +  C | (u)_{F} + (u)_{E \cup F}|^{\tau} \\ &
        = C \Big| \fint_{E} \{u(x) - (u)_{E \cup F} \} \,  dx  \Big|^{\tau} + C \Big| \fint_{F} \{ u(x) - (u)_{E \cup F} \} \,  dx \Big|^{\tau}.
\end{align*}
    By using H$\ddot{\text{o}}$lder's inequality with  $ \frac{1}{\tau} +  \frac{1}{\tau'} = 1$, we have
    \begin{align*}
         |(u)_{E}-(u)_{F}|^{\tau} & \leq  C \fint_{E} |u(x) - (u)_{E \cup F} |^{\tau} \,  dx + C \fint_{F} | u(x) - (u)_{E \cup F} |^{\tau} \, dx \\ &
       \leq  \frac{C}{\min \{ |E|, |F| \} } \int_{E \cup F} |u(x) - (u)_{E \cup F} |^{\tau} \,  dx  \\ &
       = C \frac{|E \cup F|}{\min \{ |E|, |F| \}} \fint_{E \cup F} |u(x) - (u)_{E \cup F} |^{\tau} \,  dx .
    \end{align*}
    This finishes the proof of the lemma.
\end{proof}
\smallskip

The next lemma establishes an inequality when any function  $u \in W^{s, p}(\Omega)$ is multiplied by a test function. This lemma plays a crucial role in establishing our main results.
\begin{lemma}\label{testfunc}
    Let  $\Omega$ be an open set in  $\mathbb{R}^d$. Let us consider  $u \in W^{s, p}(\Omega)$ and  $\xi \in C^{0,1}(\Omega), ~ 0 \leq \xi \leq 1$. Then  $\xi u \in W^{s, p}(\Omega)$ and for some constant  $C=C(d,p,s,\Omega)>0$, 
    \begin{equation*}
         \|\xi u\|_{W^{s, p}(\Omega)} \leq C \|u\|_{W^{s, p}(\Omega)}.
    \end{equation*}
\end{lemma}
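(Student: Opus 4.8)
The plan is to split the Gagliardo seminorm of $\xi u$ by adding and subtracting a mixed term, so as to separate the part where the roughness of $\xi$ acts from the part where the roughness of $u$ acts. First I would write, for $x,y \in \Omega$,
\begin{equation*}
\xi(x)u(x) - \xi(y)u(y) = \xi(x)\big(u(x)-u(y)\big) + u(y)\big(\xi(x)-\xi(y)\big),
\end{equation*}
and apply the elementary inequality $|a+b|^p \le 2^{p-1}(|a|^p+|b|^p)$. This reduces the estimate of $[\xi u]_{W^{s,p}(\Omega)}^p$ to bounding two double integrals: the first is controlled by $\|\xi\|_\infty^p [u]_{W^{s,p}(\Omega)}^p \le [u]_{W^{s,p}(\Omega)}^p$ directly, using $0 \le \xi \le 1$; the second is
\begin{equation*}
I := \int_\Omega \int_\Omega \frac{|u(y)|^p\,|\xi(x)-\xi(y)|^p}{|x-y|^{d+sp}}\,dx\,dy.
\end{equation*}

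The main obstacle is estimating $I$, because the Lipschitz bound $|\xi(x)-\xi(y)| \le L|x-y|$ alone gives an integrand $\lesssim |u(y)|^p |x-y|^{p-d-sp}$, and the exponent $p-d-sp$ is integrable near the diagonal only for $sp<1$; moreover the outer region $|x-y|$ large must also be handled. The way around this is to use the Lipschitz bound together with the crude bound $|\xi(x)-\xi(y)| \le 1$: for fixed $y$, split the inner integral over $x$ into $\{x \in \Omega : |x-y|<r\}$ and $\{x \in \Omega : |x-y| \ge r\}$ for a suitable radius $r>0$ (chosen, e.g., as the diameter of $\Omega$ or any fixed constant, since $\Omega$ being used in the constant allows dependence on it). On the near region use $|\xi(x)-\xi(y)|^p \le L^p|x-y|^p$ and integrate $|x-y|^{p-d-sp}$ over the ball; since $p - sp > 0$ (as $p>1$, $s<1$ give $sp < p$), we get $\int_{|z|<r}|z|^{p-sp-d}\,dz = C r^{p-sp}$, a finite constant. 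On the far region use $|\xi(x)-\xi(y)|^p \le 1$ and integrate $|x-y|^{-d-sp}$ over $\{|z| \ge r\}$, which equals $C r^{-sp}$, again finite. Hence the inner integral is bounded by a constant $C = C(d,p,s,\Omega)$ uniformly in $y$, and therefore $I \le C \int_\Omega |u(y)|^p\,dy = C\|u\|_{L^p(\Omega)}^p$.

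Combining the two pieces yields $[\xi u]_{W^{s,p}(\Omega)}^p \le C\big([u]_{W^{s,p}(\Omega)}^p + \|u\|_{L^p(\Omega)}^p\big) = C\|u\|_{W^{s,p}(\Omega)}^p$. Finally, since $\|\xi u\|_{L^p(\Omega)} \le \|\xi\|_\infty \|u\|_{L^p(\Omega)} \le \|u\|_{L^p(\Omega)}$, adding this to the seminorm bound and taking $p$-th roots gives $\|\xi u\|_{W^{s,p}(\Omega)} \le C\|u\|_{W^{s,p}(\Omega)}$; in particular $\xi u \in W^{s,p}(\Omega)$. I expect the only delicate point to be making sure the splitting radius $r$ and the resulting constant are chosen so as to genuinely depend only on $d,p,s$ (and, if one prefers, on $\operatorname{diam}\Omega$), which is harmless given the statement allows $C = C(d,p,s,\Omega)$.
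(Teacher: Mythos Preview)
Your proof is correct and is precisely the standard argument; the paper itself does not give a proof but simply refers to \cite[Lemma~5.3]{di2012hitchhikers}, where exactly this decomposition and near/far splitting is carried out. One minor remark: the constant you obtain depends on the Lipschitz constant $L$ of $\xi$, which is not listed among $d,p,s,\Omega$ in the statement---this is a slight looseness in the lemma as stated (in the paper's applications $\xi$ comes from a fixed partition of unity subordinate to a cover of $\Omega$, so the dependence is absorbed into $\Omega$), not a defect in your argument.
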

\begin{proof}
    See  \cite[Lemma 5.3]{di2012hitchhikers} for the proof.
\end{proof}

\smallskip

The next lemma establishes compactness result involving the fractional Sobolev spaces  $W^{s,p}(\Omega)$ in bounded Lipschitz domains. This lemma is useful in proving Theorem  \ref{theorem 3 sp>k}.

\begin{lemma}\label{compactness}
    Let  $s \in (0,1), ~ p \geq 1, ~ \tau \in [1,p], ~ \Omega$ be a bounded Lipschitz domain in $\mathbb{R}^d$, and  $\mathcal{F}$ be a bounded subset of  $L^{p}(\Omega)$. Suppose that
    \begin{equation*}
        \sup_{u \in \mathcal{F}} \int_{\Omega} \int_{\Omega} \frac{|u(x)-u(y)|^{p}}{|x-y|^{d+sp}} \, dx \, dy < + \infty.
    \end{equation*}
    Then  $\mathcal{F}$ is pre-compact in  $L^{\tau}(\Omega)$.
\end{lemma}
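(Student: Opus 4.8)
The plan is to prove this as a hands-on version of the fractional Rellich--Kondrachov theorem (compare \cite[Theorem~7.1]{di2012hitchhikers}): reduce it to precompactness in $L^{p}$ on a fixed cube, and then obtain total boundedness by approximating $\mathcal{F}$ in $L^{p}$ by families living in finite-dimensional subspaces. First I would note that the two hypotheses together say precisely that $\mathcal{F}$ is bounded in $W^{s,p}(\Omega)$. Since $\Omega$ is a bounded Lipschitz domain it is a $W^{s,p}$-extension domain (see \cite[Theorem~5.4]{di2012hitchhikers}), so there is a bounded linear operator $E\colon W^{s,p}(\Omega)\to W^{s,p}(\mathbb{R}^{d})$ with $Eu|_{\Omega}=u$. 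Fixing once and for all a Lipschitz function $\xi$ with compact support, $0\le\xi\le1$ and $\xi\equiv1$ on $\Omega$, and applying Lemma~\ref{testfunc}, I would replace $\mathcal{F}$ by $\{\xi\,Eu:u\in\mathcal{F}\}$: this family is still bounded in $W^{s,p}(\mathbb{R}^{d})$ and now all of its members are supported in one fixed cube $Q_{0}\supset\overline{\Omega}$. Since restriction $L^{p}(Q_{0})\to L^{p}(\Omega)$ is continuous and, $\Omega$ being bounded, $L^{p}(\Omega)\hookrightarrow L^{\tau}(\Omega)$ continuously for every $\tau\in[1,p]$, it suffices to show that this extended family is precompact in $L^{p}(Q_{0})$. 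I keep writing $\mathcal{F}$ for it and set $M:=\sup_{u\in\mathcal{F}}\|u\|_{W^{s,p}(\mathbb{R}^{d})}<\infty$.

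Next I would set up the finite-dimensional approximation. For a scale $\rho>0$, tile $\mathbb{R}^{d}$ by pairwise disjoint half-open cubes of side $\rho$, let $\{Q_{i}\}_{i\in I_{\rho}}$ be the finitely many of them that meet $Q_{0}$, and define the averaging projection $P_{\rho}u:=\sum_{i\in I_{\rho}}(u)_{Q_{i}}\,\mathbf{1}_{Q_{i}}$. The heart of the proof is the estimate
\[
\|u-P_{\rho}u\|_{L^{p}(\mathbb{R}^{d})}\;\le\;C\,\rho^{s}\,[u]_{W^{s,p}(\mathbb{R}^{d})}\qquad(u\in\mathcal{F}),\quad C=C(d,s,p).
\]
To prove it one observes that $u-P_{\rho}u$ vanishes off $\bigcup_{i\in I_{\rho}}Q_{i}$, since $u$ is supported in $Q_{0}$, while on each $Q_{i}$ Jensen's inequality gives
\[
\int_{Q_{i}}|u(x)-(u)_{Q_{i}}|^{p}\,dx\le\frac{1}{|Q_{i}|}\int_{Q_{i}}\int_{Q_{i}}|u(x)-u(y)|^{p}\,dx\,dy\le C\,\rho^{sp}\int_{Q_{i}}\int_{Q_{i}}\frac{|u(x)-u(y)|^{p}}{|x-y|^{d+sp}}\,dx\,dy,
\]
where in the last step one writes $|u(x)-u(y)|^{p}=|x-y|^{d+sp}\cdot|x-y|^{-(d+sp)}|u(x)-u(y)|^{p}$ and uses $|x-y|\le\sqrt{d}\,\rho$ on $Q_{i}$ together with $|Q_{i}|=\rho^{d}$; summing over $i\in I_{\rho}$ and using pairwise disjointness of the cubes recovers the full Gagliardo seminorm on the right. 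No relation between $sp$ and $d$ enters here.

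Finally I would finish by total boundedness. For each fixed $\rho$ the set $P_{\rho}\mathcal{F}$ sits in the finite-dimensional space spanned by $\{\mathbf{1}_{Q_{i}}:i\in I_{\rho}\}$ and is bounded in $L^{p}$, because Jensen's inequality makes $P_{\rho}$ a contraction on $L^{p}$, so $\|P_{\rho}u\|_{L^{p}}\le\|u\|_{L^{p}}\le M$; hence $P_{\rho}\mathcal{F}$ is totally bounded in $L^{p}(Q_{0})$. Given $\varepsilon>0$, choose $\rho$ so small that $C\rho^{s}M<\varepsilon/2$; then every $u\in\mathcal{F}$ lies within $\varepsilon/2$ of $P_{\rho}u$ in $L^{p}$, and finitely many $L^{p}$-balls of radius $\varepsilon/2$ covering $P_{\rho}\mathcal{F}$ yield finitely many $L^{p}$-balls of radius $\varepsilon$ covering $\mathcal{F}$. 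As $\varepsilon>0$ is arbitrary, $\mathcal{F}$ is totally bounded, hence precompact, in $L^{p}(Q_{0})$; restricting to $\Omega$ and applying H\"older's inequality gives precompactness in $L^{\tau}(\Omega)$ for every $\tau\in[1,p]$.

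I expect the only genuinely delicate point to be the extension-and-truncation reduction in the first step: this is exactly where the Lipschitz regularity of $\partial\Omega$ is used, and one must choose $\xi$ with a fixed compact support, independent of $u$, so that all the truncated extensions live in a common bounded set. The cube estimate and the total-boundedness bookkeeping are routine. An alternative ending would verify the Fr\'echet--Kolmogorov criterion directly, bounding an average over small $h$ of $\|u(\cdot+h)-u\|_{L^{p}}^{p}$ by $C|h|^{sp}[u]_{W^{s,p}(\mathbb{R}^{d})}^{p}$, but the projection argument is cleaner.
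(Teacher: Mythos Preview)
Your proof is correct. The paper itself does not give a proof of this lemma: it simply refers the reader to \cite[Theorem~7.1]{di2012hitchhikers}, the very reference you cite at the outset, so your write-up actually supplies more detail than the paper does while following essentially the same extension-plus-approximation strategy as that source.
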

\begin{proof}
    See  \cite[Theorem 7.1]{di2012hitchhikers} for the proof.
\end{proof}

\smallskip

The next lemma establishes a basic inequality. Similar inequality is also used in  \cite{squassina2018}  (see  \cite[ Lemma 3.2]{squassina2018}). 

\begin{lemma}\label{estimate}
    Let  $\tau >1$ and $c>1$. Then for all  $a, ~ b \in \mathbb{R}$, we have 
    \begin{equation}
        (|a| + |b|)^{\tau} \leq c|a|^{\tau} + (1-c^{\frac{-1}{\tau -1}})^{1-\tau} |b|^{\tau} .
    \end{equation}
\end{lemma}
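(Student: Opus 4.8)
**Proof proposal for Lemma 3.9 (the elementary inequality $(|a|+|b|)^{\tau}\le c|a|^{\tau}+(1-c^{-1/(\tau-1)})^{1-\tau}|b|^{\tau}$).**

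The plan is to reduce everything to a single-variable convexity argument. First, I observe that the inequality is homogeneous of degree $\tau$ in the pair $(a,b)$ and depends only on $|a|,|b|\ge 0$, so it suffices to prove it for $a,b\ge 0$ with, say, $a+b=1$; equivalently, writing $t=|a|/(|a|+|b|)\in[0,1]$, I must show
\begin{equation*}
1 \le c\,t^{\tau} + \bigl(1-c^{-\frac{1}{\tau-1}}\bigr)^{1-\tau}(1-t)^{\tau}\qquad\text{for all }t\in[0,1].
\end{equation*}
Actually the cleanest route avoids even this normalization: I would apply Young's inequality in the form $xy\le \frac{x^{p'}}{p'}+\frac{y^{q'}}{q'}$ with conjugate exponents, or more directly the weighted version $(|a|+|b|)^{\tau}=\bigl(\lambda^{-1}\lambda|a|+\mu^{-1}\mu|b|\bigr)^{\tau}$ and Hölder/Jensen with weights. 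Concretely: for any $\lambda,\mu>0$ with $\lambda+\mu=1$, convexity of $r\mapsto r^{\tau}$ on $[0,\infty)$ (valid since $\tau>1$) gives
\begin{equation*}
(|a|+|b|)^{\tau} = \left(\lambda\cdot\frac{|a|}{\lambda} + \mu\cdot\frac{|b|}{\mu}\right)^{\tau} \le \lambda\cdot\frac{|a|^{\tau}}{\lambda^{\tau}} + \mu\cdot\frac{|b|^{\tau}}{\mu^{\tau}} = \lambda^{1-\tau}|a|^{\tau} + \mu^{1-\tau}|b|^{\tau}.
\end{equation*}
So for every $\lambda\in(0,1)$ one has $(|a|+|b|)^{\tau}\le \lambda^{1-\tau}|a|^{\tau}+(1-\lambda)^{1-\tau}|b|^{\tau}$.

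The second and final step is simply to choose $\lambda$ to make the coefficient of $|a|^{\tau}$ equal to $c$: since $c>1$ and $\tau>1$ we need $\lambda^{1-\tau}=c$, i.e. $\lambda = c^{-1/(\tau-1)}\in(0,1)$, which is admissible. Substituting this $\lambda$ gives exactly
\begin{equation*}
(|a|+|b|)^{\tau} \le c\,|a|^{\tau} + \bigl(1-c^{-\frac{1}{\tau-1}}\bigr)^{1-\tau}|b|^{\tau},
\end{equation*}
as claimed.

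There is essentially no obstacle here — the only points requiring care are checking that $\lambda=c^{-1/(\tau-1)}$ genuinely lies in $(0,1)$ (it does, since $c>1$ forces $c^{-1/(\tau-1)}<1$, and positivity is clear) so that $1-\lambda>0$ and the right-hand side is well-defined, and noting that the convexity inequality applied is just Jensen's inequality for the two-point probability measure with weights $\lambda,1-\lambda$. If one prefers to avoid Jensen, the same estimate follows from the case $m=2$, $\gamma=\tau$ of \eqref{sumineq} applied to $\lambda^{-1}\lambda|a|$ and $\mu^{-1}\mu|b|$ combined with Hölder, but the convexity argument above is the most transparent and is what I would write out.
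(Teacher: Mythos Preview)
Your proof is correct and takes a genuinely different route from the paper's. The paper proceeds by one-variable calculus: after normalizing via $x=|a|/|b|$ it studies $f(x)=(1+x)^{\tau}-cx^{\tau}$, locates the critical point $x_0=(c^{1/(\tau-1)}-1)^{-1}$ from $f'(x_0)=0$, and computes $f(x_0)=(1-c^{-1/(\tau-1)})^{1-\tau}$ explicitly. You instead recognize the inequality as an instance of Jensen's inequality for the convex function $r\mapsto r^{\tau}$ with a two-point probability measure, obtaining the parametric family $(|a|+|b|)^{\tau}\le \lambda^{1-\tau}|a|^{\tau}+(1-\lambda)^{1-\tau}|b|^{\tau}$ for all $\lambda\in(0,1)$, and then simply read off $\lambda=c^{-1/(\tau-1)}$. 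Your argument is shorter and more conceptual, and it makes the sharpness transparent (equality in Jensen occurs when $|a|/\lambda=|b|/(1-\lambda)$, which matches the paper's $x_0$); the paper's approach, by contrast, requires no structural insight and would be the natural choice if one did not already see the convexity reformulation. The brief aside you make about recovering the estimate from \eqref{sumineq} and H\"older is imprecise as stated and is best omitted, but since you clearly flag the Jensen argument as the one you would actually write, this does not affect the correctness of the proof.
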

\begin{proof}
    Let  $f(x) = (1+x)^{\tau}- cx^{\tau}$ for all  $x \geq 0$. Then  $f(0) =  1$ and  $f(x) \to - \infty$ as  $x \to \infty$. Let  $x_{0}$ be the point of maxima. Then  $f'(x_{0}) = 0$, i.e.,
    \begin{equation*}
        \tau (1+x_{0})^{\tau -1} - c \tau x^{\tau -1}_{0} = 0.
    \end{equation*}
    Therefore, we have  $(1+x_{0})^{\tau}= c^{ \frac{\tau}{\tau-1}} x^{\tau}_{0}$ and  $x_{0} = \frac{1}{c^{\frac{1}{\tau-1}}-1}$. As  $x_{0}$ is the point of maxima, we have  $ f(x) \leq f(x_{0})$ for all  $x \geq 0$. Therefore,
    \begin{equation*}
        (1+x)^{\tau} - cx^{\tau} \leq (1+x_{0})^{\tau} - cx^{\tau}_{0} 
        = (c^{ \frac{\tau}{\tau -1}}- c) x^{\tau}_{0} 
        = \frac{(c^{\frac{\tau}{\tau -1}} - c)} {(c^{ \frac{1}{\tau-1}} -1)^{\tau}} 
        = (1-c^{\frac{-1}{\tau -1}})^{1-\tau}.
    \end{equation*}
    Assume  $|b| \neq 0$ and take  $x= \frac{|a|}{|b|}$. Then, the above inequality proves the lemma.
\end{proof}


\section{The flat boundary case}\label{The flat boundary case}
In this section, we establish our main results for the flat boundary of codimension  $k$, where $1<k<d$ with $k \in \mathbb{N}$ case when the domain is  $B^{k}_{1}(0) \times (-n,n)^{d-k}$, where  $B^{k}_{1}(0)$ is a ball of radius  $1$ with center  $0$ in  $\mathbb{R}^{k}, ~  n \in \mathbb{N}$ with  $K= \{ (x_{k}, x_{d-k}) \in \mathbb{R}^{d} : x_{k} = 0 \ \text{and} \ x_{d-k} \in [-n,n]^{d-k} \}$. In fact, for the case  $k=d-1$, we establish fractional Hardy inequality with singularity on  $K$ on a infinite cylinder (say  $\Omega = B^{d-1}_{1}(0) \times (-\infty,\infty)$ with  $K = \{ x= (x_{d-1}, x_{1}) \in \Omega : x_{d-1}=0  \}$) as the constant obtained for a flat boundary case does not depend on  $n$. For the case  $k=d$, we establish appropriate fractional Hardy inequality with singularity at a point (origin) on a ball of radius  $R>0$ with centre  $0$. First we obtain the fractional Hardy inequality with singularity on  $K$ for the case  $sp=k$, and then the case  $sp<k$ and  $sp>k$ considered seperately. To prove our main results we will do the patching business which is done in the next section.

\smallskip

For  $1 < k < d$ with $k \in \mathbb{N}$, let  $x = (x_{k}, x_{d-k}) \in \mathbb{R}^{d}$, where  $x_{k} \in \mathbb{R}^k$ and  $x_{d-k} \in \mathbb{R}^{d-k}$.  Let  $\alpha = d+ \left(sp-d \right)\frac{ \tau }{p}$, where we fix  $\tau \geq p$ when  $ sp=d$, and  $\tau \in [p, p^{*}_{s}] $ when  $sp<d$. Define, 
\begin{equation*}
    \Omega_{n} := \left\{ x= (x_{k}, x_{d-k}) :  |x_{k}| < 1 \ \text{and} \ x_{d-k} \in (-n, n)^{d-k}   \right\},
\end{equation*}
for some  $n \in \mathbb{N}$. For each  $\ell \leq - 1$, we define (see Figure \ref{fig:myfigure2})
\begin{equation*}
    A_{\ell} = \{ x=(x_{k},x_{d-k}) \in \Omega_{n} :  2^{ \ell} \leq |x_{k}| < 2^{\ell +1} \}.
\end{equation*}
\begin{figure}[!ht]
\begin{center}
\begin{tikzpicture}
\node at (-5,.5) {$2^{\ell} \leq |x_{k}| < 2^{\ell +1}$};
\node at (1.5,1.5) {$(-n,n)^{d-k}$};

\draw[dashed] (-3,1) -- (3,1);
\draw[dashed] (-3,-1) -- (3,-1);

\draw (-3,0) ellipse [y radius=1, x radius=0.5];
\draw (3,0) ellipse [y radius=1, x radius=0.5];
\fill[blue!20, opacity=0.5] (-3,0) ellipse [y radius=1, x radius=0.5];
\fill[blue!20, opacity=0.5] (3,0) ellipse [y radius=1, x radius=0.5];

\fill[white, opacity=1] (-3,0) ellipse [y radius=0.5, x radius=0.25];
\fill[white, opacity=1] (3,0) ellipse [y radius=0.5, x radius=0.25];

\draw[opacity=0.5] (-3,0) ellipse [y radius=.5, x radius=0.25];
\draw[opacity=0.5] (3,0) ellipse [y radius=.5, x radius=0.25];

\foreach \x in {-3,-2.8,...,3}
\draw[blue!20,opacity=0.5] (\x,0) ellipse [y radius=1, x radius=0.5];
    
\foreach \x in {-3,-2.8,...,3}
\draw[blue!20,opacity=0.5] (\x,0) ellipse [y radius=.5, x radius=0.25];
    
\draw[->] (-4,0) -- (4,0) node[right] {$\mathbb{R}^{d-k}$};
\draw[->] (0,-1.75) -- (0,1.75) node[above] {$\mathbb{R}^{k}$};
\end{tikzpicture}
\end{center}
\caption{$A_{\ell}$.}
\label{fig:myfigure2}
\end{figure}

\noindent Therefore, we have
\begin{equation*}
    \Omega_{n} = \bigcup_{\ell= - \infty}^{-1} A_{\ell}.
\end{equation*}
Again, we further divide  $A_{\ell}$ into a disjoint union of identical sets, denoted as  $A^{i}_{\ell}$ (see Figure \ref{fig:myfigure3}), such that if  $x = (x_{k}, x_{d-k}) \in A^{i}_{\ell}$, then  $x_{d-k} \in C^{i}_{\ell}$, where  $C^{i}_{\ell}$  is a cube of side length  $2^{ \ell}$ in $\mathbb{R}^{d-k}$. Then, we have
\begin{equation*}
      A_{\ell} = \bigcup_{i = 1}^{\sigma_{\ell}} A^{i}_{\ell}  ,
\end{equation*}
where  $\sigma_{\ell} := 2^{(- \ell+1)(d-k)} n^{d-k}$. The Lebesgue measure of  $A^{i}_{\ell}$ is then given by
\begin{equation*}
    |A^{i}_{\ell}| = \left( \frac{|\mathbb{S}^{k-1}|(2^{k}-1)2^{\ell k}}{k} \right) \times 2^{\ell (d-k)} =  \frac{|\mathbb{S}^{k-1}|(2^{k}-1)2^{\ell d}}{k}  .
\end{equation*}
\begin{figure}[!ht] 
\begin{center}
\begin{tikzpicture}
\node at (-2,.5) {$2^{\ell} \leq |x_{k}| < 2^{\ell +1}$};
    
\draw (0,1) -- (3,1);
\draw (0,-1) -- (3,-1);
\draw[dashed] (0,.5) -- (3,.5);
\draw[dashed] (0,-.5) -- (3,-.5);

\draw (0,0) ellipse [y radius=1, x radius=0.5];
\fill[blue!20, opacity=0.5] (0,0) ellipse [y radius=1, x radius=0.5];
\fill[white, opacity=1] (0,0) ellipse [y radius=0.5, x radius=0.25];
\draw (0,0) ellipse [y radius=.5, x radius=0.25];

\draw (3,0) ellipse [y radius=1, x radius=0.5];
\fill[blue!20, opacity=0.5] (3,0) ellipse [y radius=1, x radius=0.5];
\fill[white, opacity=1] (3,0) ellipse [y radius=0.5, x radius=0.25];
\draw (3,0) ellipse [y radius=.5, x radius=0.25];
\node at (1.5,1.5) {$C^{i}_{\ell}$};
\end{tikzpicture}
\end{center}
\caption{$A^{i}_{\ell}= \{ x=(x_{k},x_{d-k}) \in A_{\ell} : x_{d-k} \in C^{i}_{\ell} \}   $.}
\label{fig:myfigure3}
\end{figure}

Definition  \ref{definition}  is stated for the domain  $Q= B^{k}_{1}(0) \times (0,1)^{d-k}$. However, we are considering the domain  $\Omega_{n}$ with  $K= \{ (x_{k}, x_{d-k}) \in \mathbb{R}^{d} : x_{k} = 0 \ \text{and} \ x_{d-k} \in [-n,n]^{d-k} \}$ and proving our main results, finding that the constant does not depend on  $n$. Therefore, we are proving our main results for a large domain with a flat boundary of codimension  $k$.  In fact, as  $n \to \infty$, we prove our main results for a flat boundary of codimension  $k, ~ 1<k<d, ~ k \in \mathbb{N}$ in a unbounded domain. One can also assume the domain to be $Q$ with  $K= \{ (x_{k}, x_{d-k}) \in \mathbb{R}^{d} : x_{k} = 0 \ \text{and} \ x_{d-k} \in [0,1]^{d-k} \}$ and still establish the main results using similar steps as done for  $\Omega_{n}$.

\smallskip

Using the above decomposition of $\Omega_{n}$ into the sets $A^{i}_{\ell}$, the proof becomes straightforward. We work on each $A^{i}_{\ell}$ separately and then combine all these estimates through a summation argument to obtain the fractional Hardy inequality on $\Omega_n$ with singularity on $K$. This decomposition is an important part of our  method. For other types of decompositions in the nonlocal case, we refer to the works of Nguyen and Squassina \cite{squassina2018}, and for the local case, in the context of Poincar\'e type inequalities, to the works of Lu \cite{Lu1992, Lu1994}.

\smallskip

The next lemma establishes an inequality that holds true for each  $A^{i}_{\ell}$. This lemma is helpful in establishing our main results when the domain is  $\Omega_{n}$ and  $K= \{ (x_{k}, x_{d-k}) \in \mathbb{R}^{d} : x_{k} = 0 \ \text{and} \ x_{d-k} \in [-n,n]^{d-k} \}$.

\begin{lemma}\label{sumineqlemma}
For any  $A^{i}_{\ell}$, there exists a constant  $C=C(d,p,s, \tau,k)>0$ such that
   \begin{equation}
 \int_{A^{i}_{\ell}} \frac{|u(x)|^{\tau}}{|x_{k}|^{\alpha}} \, dx \leq C [u]^{\tau}_{W^{s, p}(A^{i}_{\ell})} + C 2^{\ell (d-\alpha)} |(u)_{A^{i}_{\ell}}|^{\tau}  .
\end{equation} 
\end{lemma}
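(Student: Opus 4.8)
The plan is to exploit the fact that each $A^i_\ell$ is, up to the dyadic scaling factor $2^\ell$, a fixed bounded Lipschitz domain on which $|x_k| \sim 2^\ell$, so that the weight $|x_k|^{-\alpha}$ can be replaced by the constant $2^{-\ell\alpha}$ at the cost of a harmless multiplicative constant. Concretely, on $A^i_\ell$ we have $2^\ell \le |x_k| < 2^{\ell+1}$, hence
\begin{equation*}
    \int_{A^i_\ell} \frac{|u(x)|^\tau}{|x_k|^\alpha}\,dx \le 2^{-\ell\alpha} \int_{A^i_\ell} |u(x)|^\tau\,dx
\end{equation*}
(with the obvious adjustment of constant if $\alpha<0$). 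So it suffices to bound $\int_{A^i_\ell}|u|^\tau\,dx$ by $C\,2^{\ell\alpha}[u]^\tau_{W^{s,p}(A^i_\ell)} + C\,2^{\ell d}|(u)_{A^i_\ell}|^\tau$.

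Next I would split $u = (u - (u)_{A^i_\ell}) + (u)_{A^i_\ell}$ and apply the elementary inequality \eqref{sumineq} (or Lemma \ref{estimate}) to get
\begin{equation*}
    \int_{A^i_\ell} |u(x)|^\tau\,dx \le C\int_{A^i_\ell} |u(x)-(u)_{A^i_\ell}|^\tau\,dx + C\,|A^i_\ell|\,|(u)_{A^i_\ell}|^\tau.
\end{equation*}
Since $|A^i_\ell| \sim 2^{\ell d}$, the second term already has the right form $C\,2^{\ell(d-\alpha)}\cdot 2^{\ell\alpha}|(u)_{A^i_\ell}|^\tau$ after reinserting the $2^{-\ell\alpha}$ factor — wait, more precisely it contributes $C\,2^{-\ell\alpha}\cdot 2^{\ell d}|(u)_{A^i_\ell}|^\tau = C\,2^{\ell(d-\alpha)}|(u)_{A^i_\ell}|^\tau$, exactly the claimed term. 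For the first (oscillation) term, the key is to apply the scaled fractional Sobolev–Poincaré inequality of Lemma \ref{sobolev}: writing $A^i_\ell = \lambda\,\widetilde A$ for a fixed reference Lipschitz domain $\widetilde A$ (independent of $\ell$ and $i$, by translation invariance of the Gagliardo seminorm in the $x_{d-k}$ variables) and $\lambda = 2^\ell$, Lemma \ref{sobolev} gives
\begin{equation*}
    \left( \fint_{A^i_\ell} |u(x)-(u)_{A^i_\ell}|^\tau\,dx \right)^{1/\tau} \le C\left( \lambda^{sp-d}[u]^p_{W^{s,p}(A^i_\ell)} \right)^{1/p},
\end{equation*}
valid for $\tau\in[p,p^*_s]$ when $sp<d$ and $\tau\ge p$ when $sp=d$, which is precisely the admissible range fixed before the lemma. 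Raising to the $\tau$ power and multiplying by $|A^i_\ell|\sim 2^{\ell d}$ yields $\int_{A^i_\ell}|u-(u)_{A^i_\ell}|^\tau\,dx \le C\,2^{\ell d}\,2^{(sp-d)\tau/p}[u]^\tau_{W^{s,p}(A^i_\ell)}$. Finally, combining with the $2^{-\ell\alpha}$ factor, the exponent of $2^\ell$ is $-\alpha + d + (sp-d)\frac{\tau}{p} = 0$ by the very definition $\alpha = d + (sp-d)\frac{\tau}{p}$, so this term is $C[u]^\tau_{W^{s,p}(A^i_\ell)}$ with a constant independent of $\ell$ and $i$.

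The main point to be careful about — the only genuine obstacle — is verifying that all the $A^i_\ell$ are dilates of one another by a single fixed Lipschitz reference domain, uniformly in $\ell$ and $i$, so that the constant in Lemma \ref{sobolev} does not degenerate; this follows because $A^i_\ell = 2^\ell\big( \{1\le |x_k|<2\}\times (\text{unit cube in }\mathbb{R}^{d-k})\big)$ translated in the $x_{d-k}$-directions, and both scaling and translation leave the relevant constants untouched (the Gagliardo seminorm and $L^\tau$ norm are translation-invariant, and the $\lambda$-dependence is tracked explicitly in Lemma \ref{sobolev}). A secondary bookkeeping point is the sign of $\alpha$ when passing from $|x_k|^{-\alpha}$ to the constant $2^{-\ell\alpha}$: if $\alpha\ge0$ one uses $|x_k|\ge 2^\ell$, if $\alpha<0$ one uses $|x_k|<2^{\ell+1}$, and in both cases the loss is an $\ell$-independent constant. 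Assembling the two contributions gives exactly the claimed inequality.
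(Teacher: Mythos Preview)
Your proof is correct and follows essentially the same approach as the paper's: replace $|x_k|^{-\alpha}$ by $2^{-\ell\alpha}$ on $A^i_\ell$, split $u$ into its mean plus oscillation, and apply the scaled Sobolev--Poincar\'e inequality of Lemma~\ref{sobolev} with reference domain $\{1<|x_k|<2\}\times(\text{unit cube})$ and $\lambda=2^\ell$, then observe that the definition $\alpha=d+(sp-d)\tau/p$ makes the resulting power of $2^\ell$ vanish in the oscillation term. The only superfluous caution is your case distinction on the sign of $\alpha$: in the range $\tau\in[p,p^*_s]$ (or $\tau\ge p$ when $sp=d$) one always has $\alpha\in[0,sp]$, so $\alpha\ge0$ throughout and only the bound $|x_k|\ge 2^\ell$ is needed.
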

\begin{proof}
    Fix any   $A^{i}_{\ell}$. Applying Lemma  \ref{sobolev} with  $\Omega = \{ (x_{k}, x_{d-k}) : 1 < |x_{k}| < 2  \ \text{and} \ x_{d-k} \in (1,2)^{d-k} \}$, and  $ \lambda = 2^{\ell}$ and using translation invariance, we have
\begin{equation*}
    \fint_{A^{i}_{\ell}} |u(x)-(u)_{A^{i}_{\ell}}|^{\tau} \,  dx \leq C 2^{\ell \left(sp-d \right) \frac{ \tau}{p}} [u]^{\tau}_{W^{s, p}(A^{i}_{\ell})}  ,
\end{equation*}
where  $C= C(d,p,s, \tau)$ is a positive constant. For  $x = (x_{k},x_{d-k}) \in A^{i}_{\ell}$, we have  $ \frac{1}{|x_{k}|} \leq \frac{1}{2^{\ell}}$. Therefore, we have
\begin{align*}
    \int_{A^{i}_{\ell}} \frac{|u(x)|^{\tau}}{|x_{k}|^{\alpha}} \,  dx & \leq \frac{1}{2^{\ell \alpha}} \int_{A^{i}_{\ell}} |u(x)-(u)_{A^{i}_{\ell}} + (u)_{A^{i}_{\ell}}|^{\tau} \, dx \\ &
    \leq \frac{2^{\tau-1}}{2^{\ell \alpha}} \int_{A^{i}_{\ell}} |u(x)-(u)_{A^{i}_{\ell}}|^{\tau} \,  dx + \frac{2^{\tau-1}}{2^{\ell \alpha}} \int_{A^{i}_{\ell}} |(u)_{A^{i}_{\ell}}|^{\tau} \, dx.
    \end{align*}
  Now, using the previous inequality, together with the definition of  $\alpha$, we obtain
    \begin{align*}
    \int_{A^{i}_{\ell}} \frac{|u(x)|^{\tau}}{|x_{k}|^{\alpha}} \,  dx & \leq C \frac{|A^{i}_{\ell}|}{2^{\ell \alpha}} \fint_{A^{i}_{\ell}} |u(x)-(u)_{A^{i}_{\ell}}|^{\tau} \, dx + C \frac{|A^{i}_{\ell}|}{2^{\ell \alpha}} |(u)_{A^{i}_{\ell}}|^{\tau} \\ &
    \leq C \left( \frac{|\mathbb{S}^{k-1}|(2^{k}-1)2^{\ell d}}{k} \right) \frac{1}{2^{\ell \alpha}} 2^{\ell \left(sp-d \right) \frac{\tau}{p}} [u]^{\tau}_{W^{s, p}(A^{i}_{\ell})} \\ & \quad  +  \left( \frac{|\mathbb{S}^{k-1}|(2^{k}-1)}{k} \right) 2^{\ell(d-\alpha)} |(u)_{A^{i}_{\ell}}|^{\tau}  
   \\ & \leq C [u]^{\tau}_{W^{s, p}(A^{i}_{\ell})} + C 2^{\ell(d-\alpha)} |(u)_{A^{i}_{\ell}}|^{\tau}  ,
\end{align*}   
where  $C= C(d,p,s,\tau,k)$ is a positive constant. This proves the lemma.
\end{proof}

\smallskip

The following lemma establishes a connection between the averages of two disjoint sets  $A^{j}_{\ell+1}$ and  $A^{i}_{\ell}$, where  $A^{j}_{\ell+1}$ is chosen in such a way that  $C^{i}_{\ell} \subset C^{j}_{\ell+1}$ (see Figure \ref{fig:myfigure4}), i.e., 
\begin{equation}\label{codn on Al and Al+1}
    A^{i}_{\ell} \subset \{ (x_{k}, x_{d-k}) \in A_{\ell} : x_{d-k} \in C^{j}_{\ell +1}   \}.
\end{equation}
\begin{figure}[!ht] 
\begin{center}
\begin{tikzpicture}

\draw (0,1.5) -- (6,1.5);
\draw (0,-1.5) -- (6,-1.5);

\draw (2,1) -- (4,1);
\draw (2,-1) -- (4,-1);

\draw (0,0) ellipse [y radius=1.5, x radius=1];
\fill[blue!20, opacity=0.5] (0,0) ellipse [y radius=1.5, x radius=1];
\fill[white, opacity=1] (0,0) ellipse [y radius=1, x radius=0.5];
\draw (0,0) ellipse [y radius=.5, x radius=0.25];
\draw (2,0) ellipse [y radius=1, x radius=0.5];
\fill[blue!20, opacity=0.5] (2,0) ellipse [y radius=1, x radius=.5];
\fill[green!20, opacity=0.5] (2,0) ellipse [y radius=1, x radius=.5];
\fill[white, opacity=1] (2,0) ellipse [y radius=.5, x radius=0.25];
\draw (0,0) ellipse [y radius=1, x radius=0.5];
\draw (2,0) ellipse [y radius=.5, x radius=0.25];

\draw (6,0) ellipse [y radius=1.5, x radius=1];    
\fill[blue!20, opacity=0.5] (6,0) ellipse [y radius=1.5, x radius=1];
\fill[white, opacity=1] (6,0) ellipse [y radius=1, x radius=0.5];
\draw (6,0) ellipse [y radius=.5, x radius=0.25];
\draw (4,0) ellipse [y radius=1, x radius=0.5];
\fill[blue!20, opacity=0.5] (4,0) ellipse [y radius=1, x radius=.5];
\fill[green!20, opacity=0.5] (4,0) ellipse [y radius=1, x radius=.5];
\fill[white, opacity=1] (4,0) ellipse [y radius=.5, x radius=0.25];
\draw (6,0) ellipse [y radius=1, x radius=0.5];
\draw (4,0) ellipse [y radius=.5, x radius=0.25];

\draw[dashed] (0,1) -- (6,1);
\draw[dashed] (0,-1) -- (6,-1);
\draw[dashed] (0,-.5) -- (6,-.5); 
\draw[dashed] (0,.5) -- (6,.5);

\node at (2,.7) { $A^{i}_{\ell}$ };
\node at (0,1.22) {$A^{j}_{\ell +1}$};
\node at (2.4,2) {$C^{j}_{\ell+1}$};
\node at (3,1.25) {$C^{i}_{\ell}$};
\end{tikzpicture}
\end{center}
\caption{$A^{i}_{\ell}$ and $A^{j}_{\ell +1}$ .}
\label{fig:myfigure4}
\end{figure}
 
\noindent Also, there are $2^{d-k}$ such $A^{i}_{\ell}$'s which satisfies the above relation. This lemma is helpful in proving the main results when the domain is  $\Omega_{n}$ and  $K= \{ (x_{k}, x_{d-k}) \in \mathbb{R}^{d} : x_{k} = 0 \ \text{and} \ x_{d-k} \in [-n,n]^{d-k} \}$.

\begin{lemma}\label{est2}
    Let  $A^{j}_{\ell+1}$ be a set such that   $A^{i}_{\ell}$ lies below the set  $A^{j}_{\ell+1}$. Then
    \begin{equation}
        |(u)_{A^{i}_{\ell}} - (u)_{A^{j}_{\ell+1}} |^{\tau} \leq C 2^{\ell \left(sp-d \right) \frac{ \tau}{p}} [u]^{\tau}_{W^{s, p}(A^{i}_{\ell} \cup A^{j}_{\ell+1})},
    \end{equation}
    where  $C$ is a positive constant independent of  $\ell$.
\end{lemma}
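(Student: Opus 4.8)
The plan is to reduce the estimate for $|(u)_{A^{i}_{\ell}} - (u)_{A^{j}_{\ell+1}}|^{\tau}$ to an application of Lemma~\ref{avg} on the union $E \cup F$ with $E = A^{i}_{\ell}$ and $F = A^{j}_{\ell+1}$, followed by the scaled fractional Sobolev inequality of Lemma~\ref{sobolev}. First I would note that, since $A^{i}_{\ell}$ lies directly below $A^{j}_{\ell+1}$ in the $x_{k}$-direction, the two sets are disjoint and their union is (up to translation) a fixed Lipschitz domain dilated by the factor $\lambda = 2^{\ell}$; concretely $A^{i}_{\ell}\cup A^{j}_{\ell+1}$ is the image under $x \mapsto 2^{\ell}x$ of a fixed bounded Lipschitz set $\Omega^{*}$ (the union of the annular block $\{1<|x_k|<2\}\times(\text{cube})$ with the block $\{2<|x_k|<4\}\times(\text{same cube})$, suitably glued), and this $\Omega^{*}$ does not depend on $\ell$, $i$, or $j$ because of the way the dyadic decomposition is constructed. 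Both $|E|$ and $|F|$ are comparable to $2^{\ell d}$, so $\frac{|E\cup F|}{\min\{|E|,|F|\}}$ is bounded by a constant independent of $\ell$.

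Next I would apply Lemma~\ref{avg} to get
\begin{equation*}
    |(u)_{A^{i}_{\ell}} - (u)_{A^{j}_{\ell+1}}|^{\tau} \leq C \fint_{A^{i}_{\ell}\cup A^{j}_{\ell+1}} \big|u(x) - (u)_{A^{i}_{\ell}\cup A^{j}_{\ell+1}}\big|^{\tau}\,dx,
\end{equation*}
and then invoke Lemma~\ref{sobolev} with $\Omega = \Omega^{*}$ and $\lambda = 2^{\ell}$, which gives
\begin{equation*}
    \left(\fint_{A^{i}_{\ell}\cup A^{j}_{\ell+1}} \big|u(x) - (u)_{A^{i}_{\ell}\cup A^{j}_{\ell+1}}\big|^{\tau}\,dx\right)^{\frac{1}{\tau}} \leq C\left(2^{\ell(sp-d)}[u]^{p}_{W^{s,p}(A^{i}_{\ell}\cup A^{j}_{\ell+1})}\right)^{\frac{1}{p}}.
\end{equation*}
Raising to the power $\tau$ yields exactly the claimed bound with the factor $2^{\ell(sp-d)\frac{\tau}{p}}$, and the constant is independent of $\ell$ since it only depends on $\Omega^{*}$, $d$, $p$, $s$, $\tau$.

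The main obstacle I anticipate is the geometric bookkeeping needed to justify that $A^{i}_{\ell}\cup A^{j}_{\ell+1}$ is genuinely a dilate of one fixed Lipschitz domain, uniformly over all admissible pairs $(i,j)$ with $A^{i}_{\ell}$ below $A^{j}_{\ell+1}$: one must check that the cube $C^{i}_{\ell}\subset\mathbb{R}^{d-k}$ of side $2^{\ell}$ sits inside the cube $C^{j}_{\ell+1}$ of side $2^{\ell+1}$ in the right relative position, so that after rescaling by $2^{-\ell}$ and translating, the configuration is always the same (finitely many possibilities, in fact, depending on which sub-cube of $C^{j}_{\ell+1}$ equals $C^{i}_{\ell}$), and that each resulting union has Lipschitz boundary with constants not depending on $\ell$. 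Once this normalization is in place, the analytic content is entirely routine. An alternative that avoids requiring $A^{i}_{\ell}\cup A^{j}_{\ell+1}$ itself to be connected-nice is to apply Lemma~\ref{sobolev} on a slightly larger fixed Lipschitz neighbourhood containing both blocks and use that $[u]_{W^{s,p}}$ over that neighbourhood is controlled by $[u]_{W^{s,p}(A^{i}_{\ell}\cup A^{j}_{\ell+1})}$ up to the overlap — but the cleanest route is the direct one above.
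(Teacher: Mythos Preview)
Your proposal is correct and follows essentially the same route as the paper: apply Lemma~\ref{avg} with $E=A^{i}_{\ell}$, $F=A^{j}_{\ell+1}$, then invoke Lemma~\ref{sobolev} on a fixed reference domain whose dilate (the paper uses $\lambda=2^{\ell+1}$ rather than $2^{\ell}$, an immaterial difference) equals $A^{i}_{\ell}\cup A^{j}_{\ell+1}$ up to translation. Your added remark that there are only finitely many relative positions of $C^{i}_{\ell}$ inside $C^{j}_{\ell+1}$, hence a uniform Lipschitz constant, makes explicit a point the paper leaves implicit.
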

\begin{proof}
Let us consider  $|(u)_{A^{i}_{\ell}} - (u)_{A^{j}_{\ell+1}} |^{\tau}$. Using Lemma  \ref{avg} with  $E=A^{i}_{\ell}$ and  $F = A^{j}_{\ell+1}$, we obtain, for some constant $C$ (independent of  $\ell$),
\begin{equation}\label{compineq1}
|(u)_{A^{i}_{\ell}} - (u)_{A^{j}_{\ell+1}} |^{\tau} \leq C \fint_{A^{i}_{\ell} \cup A^{j}_{\ell+1} }  |u(x) - (u)_{A^{i}_{\ell} \cup A^{j}_{\ell+1}}|^{\tau} \, dx.
\end{equation}
Choose an open set  $\Omega$ such that  $\Omega_{\lambda}$ is a translation of  $ A^{i}_{\ell} \cup A^{j}_{\ell+1}$ with scaling parameter  $\lambda=2^{\ell+1}$. Applying Lemma  \ref{sobolev} with this  $\Omega$ and  $\lambda=2^{\ell+1}$, and using translation invariance, we obtain
\begin{equation}\label{compineq2}
     \fint_{A^{i}_{\ell} \cup A^{j}_{\ell+1} }  |u(x) - (u)_{A^{i}_{\ell} \cup A^{j}_{\ell+1}}|^{\tau} \, dx \leq C 2^{\ell \left(sp-d \right) \frac{ \tau}{p}} [u]^{\tau}_{W^{s, p}(A^{i}_{\ell} \cup A^{j}_{\ell+1})}.
\end{equation}
By combining  \eqref{compineq1} and  \eqref{compineq2}, we establish the lemma.
\end{proof}

\smallskip

The next lemma proves Theorem  \ref{theorem 1 sp=k} for the domain  $\Omega_{n}$ and  $K= \{ (x_{k}, x_{d-k}) \in \mathbb{R}^{d} : x_{k} = 0 \ \text{and} \ x_{d-k} \in [-n,n]^{d-k} \}$ for  $\tau \in \left[p, \frac{dp+1}{1-(k-d)} \right]$. For general bounded Lipschitz domain  $\Omega$ with smooth compact set  $K \subset \Omega $ of codimension  $k$, where $1<k<d, ~ k \in \mathbb{N}$, the proof follows by the usual patching technique using partition of unity.

\begin{lemma}\label{flat case sp=k 1}
     Let   $1<k<d$ with $k \in \mathbb{N}$ and   $sp=k$. Then for any   $\tau \in [p,p^{*}_{s} ] $ and  $x= (x_{k}, x_{d-k}) \in \Omega_{n}$, with $x_{k} \in \mathbb{R}^{k}$, there exists a constant  $C= C(d,p, \tau,k) > 0$  such that
    \begin{equation}
     \left(  \bigintsss_{\Omega_{n}} \frac{|u(x)|^{\tau}}{|x_{k}|^{\alpha} \ln^{b} \left(\frac{2}{|x_{k}|} \right)}  dx \right)^{\frac{1}{\tau}} \leq  C 
 \|u\|_{W^{s, p}(\Omega_{n})}, \quad \forall \ u \in W^{s,p}_{0}(\Omega_n \setminus K),
    \end{equation}
     where  $b= \tau-1$ when  $0 \leq \alpha <k$, and  $b= \tau = p$ when  $\alpha =k$.
\end{lemma}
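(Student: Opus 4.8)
The plan is to exploit the dyadic decomposition $\Omega_n = \bigcup_{\ell \le -1} A_\ell$ with $A_\ell = \bigcup_{i=1}^{\sigma_\ell} A_\ell^i$ already set up before the statement. On each piece $A_\ell^i$ we have, by Lemma~\ref{sumineqlemma}, the local estimate $\int_{A_\ell^i} |u(x)|^\tau |x_k|^{-\alpha}\,dx \le C[u]^\tau_{W^{s,p}(A_\ell^i)} + C 2^{\ell(d-\alpha)}|(u)_{A_\ell^i}|^\tau$. On $A_\ell^i$ we also have $\ln(2/|x_k|) \sim |\ell|$ up to constants (since $2^\ell \le |x_k| < 2^{\ell+1}$, so $\ln(2/|x_k|) \in (-\ell\ln 2,(-\ell+1)\ln 2]$), so that the left-hand side of the desired inequality, restricted to $A_\ell^i$, is controlled by $|\ell|^{-b}$ times the right-hand side of Lemma~\ref{sumineqlemma}. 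Summing over $i$ and then over $\ell$, the seminorm contributions $\sum_{\ell,i}|\ell|^{-b}[u]^\tau_{W^{s,p}(A_\ell^i)}$ are dominated by $[u]^\tau_{W^{s,p}(\Omega_n)}$ (with finite overlap), so the crux is to bound $\sum_{\ell \le -1} |\ell|^{-b} 2^{\ell(d-\alpha)} \sum_{i=1}^{\sigma_\ell} |(u)_{A_\ell^i}|^\tau$ by $C\|u\|_{W^{s,p}(\Omega_n)}^\tau$.

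To handle the average terms I would telescope each $(u)_{A_\ell^i}$ down to a fixed top-level average. Since $u \in W^{s,p}_0(\Omega_n \setminus K)$ vanishes near $K$, for each column the averages $(u)_{A_\ell^i}$ eventually stabilise, but more usefully one writes $(u)_{A_\ell^i} = (u)_{A_\ell^i} - (u)_{A_{-1}^{i_0}} + (u)_{A_{-1}^{i_0}}$ where $A_\ell^i$ lies below $A_{-1}^{i_0}$, and expands the difference as a telescoping sum $\sum_{m=\ell}^{-2} \big( (u)_{A_m^{i_m}} - (u)_{A_{m+1}^{i_{m+1}}} \big)$ along the chain of stacked cubes. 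Lemma~\ref{est2} gives $|(u)_{A_m^{i_m}} - (u)_{A_{m+1}^{i_{m+1}}}|^\tau \le C 2^{m(sp-d)\tau/p}[u]^\tau_{W^{s,p}(A_m^{i_m}\cup A_{m+1}^{i_{m+1}})}$; using $sp = k$ this exponent is $2^{m(k-d)\tau/p}$. Applying the power-sum inequality \eqref{sumineq} (or Hölder with a weight when $\tau > p$) to the telescoping sum and then summing the geometric-type series in $m$, one bounds $\sum_i |(u)_{A_\ell^i}|^\tau$ by $C$ times a sum of seminorm pieces over all cubes below level $\ell$, plus the top term $C\,\sigma_\ell\, \sup_{i_0}|(u)_{A_{-1}^{i_0}}|^\tau \le C\sigma_\ell \|u\|^\tau_{W^{s,p}(\Omega_n)}$ (the last bound via Hölder/Poincaré on the fixed top slab). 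The weight $2^{\ell(d-\alpha)}\sigma_\ell = 2^{\ell(d-\alpha)}2^{(-\ell+1)(d-k)}n^{d-k}$ must then be summed against $|\ell|^{-b}$; with $\alpha = d + (k-d)\tau/p$ one computes $d - \alpha = (d-k)\tau/p$, so $2^{\ell(d-\alpha)}2^{-\ell(d-k)} = 2^{\ell(d-k)(\tau/p - 1)}$, which is summable in $\ell \le -1$ when $\tau > p$ — this produces the regime $b = \tau - 1$ with $0 \le \alpha < k$ — while at $\tau = p$ (equivalently $\alpha = k$) this factor is identically $1$ and the series $\sum_{\ell \le -1}|\ell|^{-b}$ converges precisely because $b = p > 1$, which is exactly where the logarithmic correction becomes indispensable.

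The main obstacle is the bookkeeping in the telescoping/summation step: after expanding $(u)_{A_\ell^i}$ one gets, for each $\ell$, a double sum over $i$ and over the chain index $m$ of seminorm pieces, each weighted by $2^{\ell(d-\alpha)}|\ell|^{-b}$ and $2^{m(k-d)\tau/p}$, and one must reorganise this (interchanging the $\ell$ and $m$ summations, and matching the combinatorial count $\sigma_\ell/\sigma_m$ of columns) so that each seminorm piece $[u]^\tau_{W^{s,p}(A_m^{i_m}\cup A_{m+1}^{i_{m+1}})}$ is hit with a total coefficient that is summable, giving overall $\le C[u]^\tau_{W^{s,p}(\Omega_n)}$. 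When $\tau > p$ one additionally pays a Hölder cost on the telescoping sum of length $|\ell - m|$, contributing a polynomial factor in $|\ell|$ that must be absorbed — this is what forces the precise threshold $\tau \le \frac{dp+1}{1-(k-d)}$ in case (2) and, beyond it, makes the bare telescoping argument fail so that one must instead fall back (as the next lemma in the paper does) on interpolation with the fractional Sobolev inequality. I would carry out the $\tau = p$ case first in full detail, since there the telescoping sum is handled by the clean inequality \eqref{sumineq} with $\gamma = 1$ and no Hölder loss, and then treat $p < \tau \le \frac{dp+1}{1-(k-d)}$ as a weighted variant.
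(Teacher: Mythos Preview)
Your overall framework---dyadic decomposition, the local estimate from Lemma~\ref{sumineqlemma}, the logarithmic factor $\ln(2/|x_k|)\sim |\ell|$, and reducing matters to controlling $\sum_\ell |\ell|^{-b} 2^{\ell(d-\alpha)}\sum_i |(u)_{A_\ell^i}|^\tau$---matches the paper exactly. The gap is in how you propose to handle this last sum.

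Your plan is to telescope $(u)_{A_\ell^i}$ all the way up to $(u)_{A_{-1}^{i_0}}$ and bound each difference by Lemma~\ref{est2}. But at $\tau=p$ (the critical case $\alpha=k$) this loses the sharp exponent. Concretely: the telescoping sum has $|\ell|$ terms, and raising $\bigl|\sum_m d_m\bigr|$ to the $p$-th power costs a factor $|\ell|^{p-1}$ (your claim that inequality~\eqref{sumineq} with $\gamma=1$ gives ``no H\"older loss'' overlooks that the loss occurs when you take the $p$-th power, not in the triangle inequality itself). After this loss, the top-level term carries total weight $\sum_\ell |\ell|^{p-1-b}\cdot 2^{\ell(d-k)}\cdot(\sigma_\ell/\sigma_{-1})$; since $2^{\ell(d-k)}\sigma_\ell$ is constant in $\ell$, this is $\sum_\ell |\ell|^{p-1-b}$, convergent only for $b>p$. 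Likewise, after swapping the $\ell$ and $m$ sums in the seminorm contribution, the coefficient of each $[u]^p_{W^{s,p}(A_m^{i_m}\cup A_{m+1}^{i_{m+1}})}$ is $\sum_{\ell\le m}|\ell|^{p-1-b}$, again divergent at $b=p$. So direct telescoping gives only $b>p$, not the sharp $b=p$.

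The paper avoids this by \emph{not} telescoping. Instead it applies Lemma~\ref{estimate} with the specific choice $c=\bigl(\frac{-\ell}{-\ell-1/2}\bigr)^{\tau-1}$ to the one-step bound $|(u)_{A_\ell^i}|\le |(u)_{A_{\ell+1}^j}|+|(u)_{A_\ell^i}-(u)_{A_{\ell+1}^j}|$, producing (after multiplying by $2^{\ell(d-\alpha)}$ and summing over~$i$) the recursive inequality
\[
\frac{2^{\ell(d-k)}}{(-\ell)^{p-1}}\,a_\ell \;\le\; \frac{2^{(\ell+1)(d-k)}}{(-\ell-1/2)^{p-1}}\,a_{\ell+1} \;+\; C\,[u]^p_{W^{s,p}(A_\ell\cup A_{\ell+1})},
\]
where $a_\ell=\sum_i|(u)_{A_\ell^i}|^p$. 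Summing over $\ell$ and rearranging, the left side becomes $\sum_\ell\bigl((-\ell)^{-(p-1)}-(-\ell+1/2)^{-(p-1)}\bigr)2^{\ell(d-k)}a_\ell$, and the asymptotic $(-\ell)^{-(p-1)}-(-\ell+1/2)^{-(p-1)}\sim(-\ell)^{-p}$ delivers exactly $b=p$. This one-step recursion with a carefully tuned constant---rather than a full telescope---is the idea you are missing; it is a discrete Hardy-inequality mechanism that your approach does not reproduce.
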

\begin{proof}
Let  $u \in W^{s,p}(\Omega_{n}) \cap C(\Omega_{n})$ be such that  $u=0$ in a neighbourhood of  $K$. For each  $x= (x_{k}, x_{d-k}) \in A^{i}_{\ell}$, we have  $|x_{k}| < 2^{\ell +1}$, which  implies  $ \ln \left(\frac{2}{|x_{k}|} \right) > (-\ell) \ln 2$. Using this together with Lemma  \ref{sumineqlemma} and the fact that  $\frac{1}{(-\ell)^{b}} \leq 1$, we obtain
\begin{align*}
    \bigintsss_{A^{i}_{\ell}} \frac{|u(x)|^{\tau}}{|x_{k}|^{\alpha} \ln^{b} \left(\frac{2}{|x_{k}|} \right)} \, dx & \leq \frac{C}{(-\ell)^{b}} [u]^{\tau}_{W^{s, p}(A^{i}_{\ell})} + C \frac{2^{\ell(d-\alpha)}}{(-\ell)^{b}} |(u)_{A^{i}_{\ell}}|^{\tau} \\ &
     \leq  C [u]^{\tau}_{W^{s, p}(A^{i}_{\ell})} + C \frac{2^{\ell(d-\alpha)}}{(-\ell)^{b}} |(u)_{A^{i}_{\ell}}|^{\tau}  .
\end{align*}
By summing the above inequality from  $i=1$ to  $\sigma_{\ell}$, we obtain
\begin{equation*}
    \bigintsss_{A_{\ell}} \frac{|u(x)|^{\tau}}{|x_{k}|^{\alpha} \ln^{b} \left( \frac{2}{|x_{k}|} \right)} \,  dx \leq C   \sum_{i=1}^{\sigma_{\ell}} [u]^{\tau}_{W^{s, p}(A^{i}_{\ell})} + C \frac{2^{\ell (d-\alpha)}}{(-\ell)^{b}}  \sum_{i=1}^{\sigma_{\ell}} |(u)_{A^{i}_{\ell}}|^{\tau}.
\end{equation*}
Applying  \eqref{sumineq} with  $\gamma= \frac{\tau}{p}$, we have
\begin{equation}\label{seminorm ineq relation}
    \sum_{i=1}^{\sigma_{\ell}} [u]^{\tau}_{W^{s, p}(A^{i}_{\ell})} \leq \left( \sum_{i=1}^{\sigma_{\ell}} [u]^{p}_{W^{s, p}(A^{i}_{\ell})} \right)^{\frac{\tau}{p}} \leq [u]^{\tau}_{W^{s, p}(A_{\ell})} .
\end{equation}
Therefore, we have
\begin{equation*}
    \bigintsss_{A_{\ell}} \frac{|u(x)|^{\tau}}{|x_{k}|^{\alpha} \ln^{b} \left( \frac{2}{|x_{k}|} \right) } \, dx \leq C [u]^{\tau}_{W^{s, p}(A_{\ell})} +  C \frac{2^{\ell (d-\alpha)}}{(-\ell)^{b}}  \sum_{i=1}^{\sigma_{\ell}} |(u)_{A^{i}_{\ell}}|^{\tau}.
\end{equation*}
Again, summing the above inequality from  $\ell =m \in \mathbb{Z}^{-}$ to  $-1$, we obtain
\begin{equation}\label{eqnn2}
\sum_{\ell=m}^{-1} \bigintsss_{A_{\ell}} \frac{|u(x)|^{\tau}}{|x_{k}|^{\alpha} \ln^{b}\left( \frac{2}{|x_{k}|} \right)} \,  dx \leq  C \sum_{\ell=m}^{-1} [u]^{\tau}_{W^{s, p}(A_{\ell})} + C \sum_{\ell=m}^{-1} \frac{2^{\ell (d-\alpha)}}{(-\ell)^{b}}  \sum_{i=1}^{\sigma_{\ell}} |(u)_{A^{i}_{\ell}}|^{\tau}  .
\end{equation}
Let  $A^{j}_{\ell+1}$ be such that it satisfies  \eqref{codn on Al and Al+1}. Using triangle inequality, we have 
\begin{equation*}
    |(u)_{A^{i}_{\ell}}|^\tau \leq  \left( |(u)_{A^{j}_{\ell+1}}| + |(u)_{A^{i}_{\ell}} - (u)_{A^{j}_{\ell+1}}| \right)^\tau.
\end{equation*}
For  $\ell \in \mathbb{Z}^{-} \setminus \{-1 \}$, applying Lemma  \ref{estimate} with  $c := \left( \frac{-\ell}{-\ell-(1/2)} \right)^{\tau-1}>1$ together with Lemma  \ref{est2} and  $sp=k$, we obtain
\begin{equation*}
    |(u)_{A^{i}_{\ell}}|^{\tau} \leq \left( \frac{-\ell}{-\ell-(1/2)} \right)^{\tau-1} |(u)_{A^{j}_{\ell+1}}|^{\tau} + C (-\ell)^{\tau-1} 2^{\ell(k-d) \frac{ \tau}{p}} [u]^{\tau}_{W^{s, p}(A^{i}_{\ell} \cup A^{j}_{\ell+1})}  .
\end{equation*}
Multiplying the above inequality by  $2^{\ell (d-\alpha)}$, and using the definition of  $\alpha$, we get
\begin{equation*}
    \frac{2^{\ell (d-\alpha)}}{(-\ell)^{\tau-1}}  |(u)_{A^{i}_{\ell}}|^{\tau} \leq \frac{2^{\ell (d-\alpha)}}{ (-\ell-(1/2))^{\tau-1}} |(u)_{A^{j}_{\ell+1}}|^{\tau} + C [u]^{\tau}_{W^{s, p}(A^{i}_{\ell} \cup A^{j}_{\ell+1})}  .
\end{equation*}
Since there are  $2^{d-k}$ such  $A^{i}_{\ell}$'s that satisfy \eqref{codn on Al and Al+1}, summing the above inequality from  $i=2^{d-k}(j-1)+1$ to  $2^{d-k}j$, we obtain
\begin{align}\label{cubes below the cubes 1}
      \frac{2^{\ell (d-\alpha)}}{(-\ell)^{\tau-1}}  \sum_{i=2^{d-k}(j-1)+1}^{2^{d-k}j} |(u)_{A^{i}_{\ell}}|^{\tau} & \leq 2^{d-k} \frac{2^{\ell (d-\alpha)}}{ (-\ell-(1/2))^{\tau-1}} |(u)_{A^{j}_{\ell+1}}|^{\tau} \nonumber \\ & \quad
       + \ C  \sum_{i=2^{d-k}(j-1)+1}^{2^{d-k}j} [u]^{\tau}_{W^{s, p}(A^{i}_{\ell} \cup A^{j}_{\ell+1})}  .
\end{align}
\underline{\textbf{Case 1: \  $\alpha=k$}:} \ In this case, we have  $\tau =p$. From \eqref{cubes below the cubes 1}, we have

\begin{align*}
      \frac{2^{\ell(d-k)}}{(-\ell)^{p-1}}  \sum_{i=2^{d-k}(j-1)+1}^{2^{d-k}j} |(u)_{A^{i}_{\ell}}|^{p} & \leq 2^{d-k} \frac{2^{\ell(d-k)}}{ (-\ell-(1/2))^{p-1}} |(u)_{A^{j}_{\ell+1}}|^{p} \\ & \quad
      + \ C  \sum_{i=2^{d-k}(j-1)+1}^{2^{d-k}j} [u]^{p}_{W^{s, p}(A^{i}_{\ell} \cup A^{j}_{\ell+1})}  .
\end{align*}
Again, summing the above inequality from  $j=1$ to  $\sigma_{\ell+1}$, and using the fact that
\begin{equation}\label{summation relation}
  \sum_{j=1}^{\sigma_{\ell+1}} \Bigg(  \sum_{i=2^{d-k}(j-1)+1}^{2^{d-k}j}  |(u)_{A^{i}_{\ell}}|^{p} \Bigg)  = \sum_{i=1}^{\sigma_{\ell}}  |(u)_{A^{i}_{\ell}}|^{p},
\end{equation}
we obtain
\begin{align*}
\frac{2^{\ell(d-k)}}{(-\ell)^{p-1}}  \sum_{i=1}^{\sigma_{\ell}} |(u)_{A^{i}_{\ell}}|^{p} & \leq \frac{2^{(\ell+1)(d-k)}}{ (-\ell-(1/2))^{p-1}} \sum_{j=1}^{\sigma_{\ell+1}} |(u)_{A^{j}_{\ell+1}}|^{p} \\ & \quad
  + \ C \sum_{j=1}^{\sigma_{\ell+1}} \Bigg( \sum_{i=2^{d-k}(j-1)+1}^{2^{d-k}j}  [u]^{p}_{W^{s, p}(A^{i}_{\ell} \cup A^{j}_{\ell+1})} \Bigg).
\end{align*}
Also, we have
\begin{equation*}
    \sum_{j=1}^{\sigma_{\ell+1}} \Bigg( \sum_{i=2^{d-k}(j-1)+1}^{2^{d-k}j}  [u]^{p}_{W^{s, p}(A^{i}_{\ell} \cup A^{j}_{\ell+1})} \Bigg)   \leq [u]^{p}_{W^{s, p}(A_{\ell} \cup A_{\ell+1})} .
\end{equation*}
Combining the above two inequalities, we obtain
\begin{equation*}
   \frac{2^{\ell(d-k)}}{(-\ell)^{p-1}}  \sum_{i=1}^{\sigma_{\ell}} |(u)_{A^{i}_{\ell}}|^{p}  \leq \frac{2^{(\ell+1)(d-k)}}{ (-\ell-(1/2))^{p-1}} \sum_{j=1}^{\sigma_{\ell+1}} |(u)_{A^{j}_{\ell+1}}|^{p} + C  [u]^{p}_{W^{s, p}(A_{\ell} \cup A_{\ell+1})}  .
\end{equation*}
For simplicity, let  $a_{\ell} = \sum_{i=1}^{\sigma_{\ell}} |(u)_{A^{i}_{\ell}}|^{p}$. Then, the above inequality will become
\begin{equation*}
    \frac{2^{\ell(d-k)}}{(-\ell)^{p-1}} a_{\ell} \leq \frac{2^{(\ell+1)(d-k)}}{ (-\ell-(1/2))^{p-1}} a_{\ell+1} + C  [u]^{p}_{W^{s, p}(A_{\ell} \cup A_{\ell+1})}  .
\end{equation*}
Summing the above inequality from  $\ell=m \in \mathbb{Z}^{-}$ to  $-2$, we get
\begin{equation*}
   \sum_{\ell=m}^{-2} \frac{2^{\ell(d-k)}}{(-\ell)^{p-1}} a_{\ell} \leq \sum_{\ell=m}^{-2} \frac{2^{(\ell+1)(d-k)}}{ (-\ell-(1/2))^{p-1}} a_{\ell+1} + C  \sum_{\ell=m}^{-2} [u]^{p}_{W^{s, p}(A_{\ell} \cup A_{\ell+1})}  .
\end{equation*}
By changing sides, rearranging, and re-indexing, we get
\begin{multline*}
    \frac{2^{m(d-k)}}{(-m)^{p-1}} a_{m} + \sum_{\ell=m+1}^{-2} \left\{ \frac{1}{(-\ell)^{p-1}} - \frac{1}{(-\ell+1/2)^{p-1}} \right\} 2^{\ell(d-k)} a_{\ell}  \\ \leq \left( \frac{2}{3} \right)^{p-1} 2^{(-1)(d-k)}a_{-1} 
    + \ C \sum_{\ell=m}^{-2} [u]^{p}_{W^{s, p}(A_{\ell} \cup A_{\ell+1})}  .
\end{multline*}
Now, for large values of  $-\ell$, we have
\begin{equation*}
     \frac{1}{(-\ell)^{p-1}} - \frac{1}{(-\ell+1/2)^{p-1}} \sim \frac{1}{(-\ell)^{p}},
\end{equation*}
and by choosing  $-m$ large enough such that  $|(u)_{A^{j}_{m}}|=0$ for all  $j \in \{ 1, \dots, \sigma_{m} \}$, we obtain
\begin{equation*}
    \sum_{\ell=m}^{-2} \frac{2^{\ell(d-k)}}{(-\ell)^{p}} a_{\ell} \leq  C a_{-1} +  C \sum_{\ell=m}^{-2} [u]^{p}_{W^{s, p}(A_{\ell} \cup A_{\ell+1})}  .
\end{equation*}
Therefore, adding   $2^{(k -d)}a_{-1}$ on  both sides,  we have for some constant  $C= C(d,p, k)>0$ 
\begin{equation*}
    \sum_{\ell=m}^{-1} \frac{2^{\ell(d-k)}}{(-\ell)^{p}} a_{\ell} \leq C  a_{-1} +  C \sum_{\ell=m}^{-2} [u]^{p}_{W^{s, p}(A_{\ell} \cup A_{\ell+1})}  .
\end{equation*}
Putting the value of  $a_{\ell}$ in the above inequality, we obtain
\begin{equation}\label{eqn10}
    \sum_{\ell=m}^{-1} \frac{2^{\ell(d-k)}}{(-\ell)^{p}} \sum_{i=1}^{\sigma_{\ell}} |(u)_{A^{i}_{\ell}}|^{p} \leq C \sum_{j=1}^{\sigma_{-1}} |(u)_{A^{j}_{-1}}|^{p} + C \sum_{\ell=m}^{-2} [u]^{p}_{W^{s, p}(A_{\ell} \cup A_{\ell+1})}  .
\end{equation}
Combining  \eqref{eqnn2} and  \eqref{eqn10} with  $\tau = b= p$ and  $\alpha=k$ yields
\begin{equation*}\label{eqqnn}
    \sum_{\ell=m}^{-1} \bigintsss_{A_{\ell}} \frac{|u(x)|^{p}}{|x_{k}|^{k} \ln^{p} \left( \frac{2}{|x_{k}|} \right)} \, dx \leq  C \sum_{j=1}^{\sigma_{-1}} |(u)_{A^{j}_{-1}}|^{p} +  C \sum_{\ell=m}^{-2} [u]^{p}_{W^{s, p}(A_{\ell} \cup A_{\ell+1})}  .
\end{equation*}
Also,
\begin{equation*}
      \sum_{j=1}^{\sigma_{-1}} |(u)_{A^{j}_{-1}}|^{p} =   \sum_{j=1}^{\sigma_{-1}} \Bigg| \frac{1}{|A^{j}_{-1}|} \int_{A^{j}_{-1}} u(x) \, dx \Bigg|^{p}
     \leq  C \sum_{j=1}^{\sigma_{-1}} \int_{A^{j}_{-1}} |u(x)|^{p} \, dx \leq C \int_{\Omega_{n}} |u(x)|^{p} \, dx.
\end{equation*}
Here, we have used H$\ddot{\text{o}}$lder's inequality with  $ \frac{1}{p} +  \frac{1}{p'} = 1$. Combining the above two inequalities, we obtain
\begin{equation*}
     \sum_{\ell=m}^{-1} \bigintsss_{A_{\ell}} \frac{|u(x)|^{p}}{|x_{k}|^{k} \ln^{p} \left( \frac{2}{|x_{k}|} \right)} \, dx \leq  C \left( \|u\|^{p}_{L^{p}(\Omega_{n})} +  \sum_{\ell=m}^{-2} [u]^{p}_{W^{s, p}(A_{\ell} \cup A_{\ell+1})} \right)  .
\end{equation*}
Therefore, from  \eqref{se1} (see Appendix  \ref{appendix}), we have
 \begin{equation*}
      \left( \bigintsss_{\Omega_{n}} \frac{|u(x)|^{p}}{|x_{k}|^{k} \ln^{p}\left( \frac{2}{|x_{k}|} \right)}  \, dx \right)^{\frac{1}{p}} \leq C \left( [u]^{p}_{W^{s, p}(\Omega_{n})} + \|u\|^{p}_{L^{p}(\Omega_{n})} \right)^{\frac{1}{p}}   .
 \end{equation*}
\underline{\textbf{Case 2: \  $ 0 \leq \alpha<k$}:} \ In this case, we have  $\tau \in ( p, p^{*}_{s}]$. One can modify the proof of the case  $\alpha=k$ to obtain the result for the case  $0 \leq \alpha <k$. Consider the inequality  \eqref{cubes below the cubes 1} and rewrite it
\begin{align*}
      \frac{2^{\ell (d-\alpha)}}{(-\ell)^{\tau-1}}  \sum_{i=2^{d-k}(j-1)+1}^{2^{d-k}j} |(u)_{A^{i}_{\ell}}|^{\tau} & \leq  \frac{2^{(\ell+1)(d-\alpha)}}{ 2^{k- \alpha} (-\ell-(1/2))^{\tau-1}} |(u)_{A^{j}_{\ell+1}}|^{\tau} \\ & \quad
       + \ C  \sum_{i=2^{d-k}(j-1)+1}^{2^{d-k}j} [u]^{\tau}_{W^{s, p}(A^{i}_{\ell} \cup A^{j}_{\ell+1})}  .
\end{align*}
Using the fact
\begin{equation*}
     \frac{1}{(-\ell)^{\tau-1}} - \frac{1}{2^{k-\alpha}(-\ell+1/2)^{\tau-1}} \sim \frac{1}{(-\ell)^{\tau-1}},
\end{equation*}
and following the similar approach illustrated in the case $1$, one can obtain the following inequality:
\begin{equation*}
     \left(  \bigintsss_{\Omega_{n}} \frac{|u(x)|^{\tau}}{|x_{k}|^{\alpha} \ln^{\tau -1} \left(\frac{2}{|x_{k}|} \right)} \, dx \right)^{\frac{1}{\tau}} \leq  C 
 \|u\|_{W^{s, p}(\Omega_{n})},
\end{equation*}
for the case  $0 \leq \alpha <k$.
\end{proof}

The next lemma will improve the weight function presented in Lemma  \ref{flat case sp=k 1}. We improve the weight function for  $\tau \in \left( \frac{dp+1}{1-(k-d)}, p^{*}_{s}\right]$.
\begin{lemma}\label{flat case sp=k 2}
     Let  $1<k<d$ with $k \in \mathbb{N}$ and  $sp=k$. Then for any  $x = (x_{k},x_{d-k}) \in \Omega_{n}$, with $x_{k} \in \mathbb{R}^{k}$, there exist a constant  $C= C(d,p, \tau,k)>0$ such that for any  $\tau \in  \left[ p, p^{*}_{s} \right] $, we have
    \begin{equation}
     \left(  \bigintsss_{\Omega_{n}} \frac{|u(x)|^{\tau}}{|x_{k}|^{\alpha} \ln^{\beta} \left(\frac{2}{|x_{k}|} \right)} \,  dx \right)^{\frac{1}{\tau}} \leq   C \|u\|_{W^{s, p}(\Omega_{n})}, \hspace{3mm} \ \forall \  u \in W^{s,p}_{0}(\Omega_n \setminus K),
    \end{equation}
     where  $\beta= p$ when  $\tau=p,  ~ \beta = \tau -1$ when  $\tau \in \left(p, \frac{dp+1}{1-(k-d)} \right]$, and  $\beta= dp +  (k-d) \tau$ when  $\tau \in \left( \frac{dp+1}{1-(k-d)}, p^{*}_{s}\right]$.
\end{lemma}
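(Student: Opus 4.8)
The plan is to split the range of $\tau$ at the threshold $\tau_{0}:=\tfrac{dp+1}{1-(k-d)}$, which one checks satisfies $p<\tau_{0}<p^{*}_{s}$ whenever $1<k<d$ and $p>1$. On $[p,\tau_{0}]$ there is nothing new: for $\tau\in(p,\tau_{0}]$ the exponent $\alpha=d+(k-d)\tfrac{\tau}{p}$ lies in $(0,k)$, so Lemma~\ref{flat case sp=k 1} already delivers the asserted inequality with $\beta=\tau-1$, while at $\tau=p$ one has $\alpha=k$ and Lemma~\ref{flat case sp=k 1} gives $\beta=p$. Hence the new content lies in $\tau\in(\tau_{0},p^{*}_{s}]$, where the required weight exponent is $\beta=dp+(k-d)\tau$.

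For that range I would interpolate the estimate of Lemma~\ref{flat case sp=k 1} at the ``corner'' exponent $\tau_{0}$ against the plain fractional Sobolev embedding at the critical exponent $p^{*}_{s}$. Put $\alpha_{0}:=d+(k-d)\tfrac{\tau_{0}}{p}$; since $\tau_{0}\in(p,p^{*}_{s})$ we have $0<\alpha_{0}<k$, so Lemma~\ref{flat case sp=k 1} supplies $\big(\int_{\Omega_{n}}|u|^{\tau_{0}}|x_{k}|^{-\alpha_{0}}\ln^{-(\tau_{0}-1)}(2/|x_{k}|)\,dx\big)^{1/\tau_{0}}\le C\|u\|_{W^{s,p}(\Omega_{n})}$. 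Also, since $sp=k<d$, the fractional Sobolev embedding invoked in the proof of Lemma~\ref{sobolev} gives $\|u\|_{L^{p^{*}_{s}}(\Omega_{n})}\le C\|u\|_{W^{s,p}(\Omega_{n})}$, and at $\tau=p^{*}_{s}$ the parameters are $\alpha=0$ and $\beta=dp+(k-d)p^{*}_{s}=0$, so this last estimate is exactly the claim at the endpoint $\tau=p^{*}_{s}$. For $\tau\in(\tau_{0},p^{*}_{s})$ I would choose $\theta\in(0,1)$ by $\tfrac1\tau=\tfrac{\theta}{\tau_{0}}+\tfrac{1-\theta}{p^{*}_{s}}$, factor the weighted integrand pointwise as $\big(|u|^{\tau_{0}}|x_{k}|^{-\alpha_{0}}\ln^{-(\tau_{0}-1)}(2/|x_{k}|)\big)^{\theta\tau/\tau_{0}}\cdot\big(|u|^{p^{*}_{s}}\big)^{(1-\theta)\tau/p^{*}_{s}}$, apply H\"older's inequality with the conjugate exponents $\tfrac{\tau_{0}}{\theta\tau}$ and $\tfrac{p^{*}_{s}}{(1-\theta)\tau}$ (both $\ge1$ since $\tau_{0}<\tau\le p^{*}_{s}$ forces $\theta\tau\le\tau_{0}$ and $(1-\theta)\tau\le p^{*}_{s}$), and feed in the two endpoint estimates; raising to the $\tfrac1\tau$-th power leaves $\|u\|_{W^{s,p}(\Omega_{n})}^{\theta}\,\|u\|_{W^{s,p}(\Omega_{n})}^{1-\theta}=\|u\|_{W^{s,p}(\Omega_{n})}$.

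For this factorisation to reproduce the stated weight one needs the two algebraic identities $\tfrac{\alpha_{0}\theta\tau}{\tau_{0}}=\alpha$ and $\tfrac{(\tau_{0}-1)\theta\tau}{\tau_{0}}=dp+(k-d)\tau=\beta$. The first is forced by the defining relation for $\theta$ once one uses $\tfrac{d}{p^{*}_{s}}=\tfrac{d-k}{p}$ to eliminate $p^{*}_{s}$ (so that $\tfrac{d\theta\tau}{\tau_{0}}=d+(1-\theta)\tau\tfrac{k-d}{p}$, whence $\tfrac{\alpha_{0}\theta\tau}{\tau_{0}}=d+(k-d)\tfrac{\tau}{p}$). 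The second then follows from the elementary identity $\tau_{0}-1=p\,\alpha_{0}$ (a one-line computation from the definitions of $\tau_{0}$ and $\alpha_{0}$), which gives $\tfrac{(\tau_{0}-1)\theta\tau}{\tau_{0}}=p\cdot\tfrac{\alpha_{0}\theta\tau}{\tau_{0}}=p\alpha=dp+(k-d)\tau$.

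I expect this exponent bookkeeping to be the only slightly delicate step: it is precisely where the threshold $\tau_{0}=\tfrac{dp+1}{1-(k-d)}$ is forced, being the unique exponent at which Lemma~\ref{flat case sp=k 1}'s ``$\beta=\tau-1$'' weight coincides with $p\,\alpha_{0}$, so that interpolation against the $\beta=0$ Sobolev endpoint yields exactly $\beta=dp+(k-d)\tau$ on $(\tau_{0},p^{*}_{s}]$. Everything else — verifying $p<\tau_{0}<p^{*}_{s}$, that the H\"older exponents are $\ge1$, and that the argument applies to all $u\in W^{s,p}_{0}(\Omega_{n}\setminus K)$ (both inputs already being stated for such $u$) — is routine.
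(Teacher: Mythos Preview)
Your proposal is correct and follows essentially the same route as the paper: reduce to $\tau\in(\tau_{0},p^{*}_{s}]$ via Lemma~\ref{flat case sp=k 1}, then interpolate the $\tau_{0}$-estimate against the Sobolev embedding at $p^{*}_{s}$ by H\"older, and check that the resulting weight exponents recover $\alpha$ and $\beta=dp+(k-d)\tau$. The only cosmetic difference is your parametrisation of the interpolation parameter (you take $1/\tau=\theta/\tau_{0}+(1-\theta)/p^{*}_{s}$, the paper takes $\tau=\theta\tau_{0}+(1-\theta)p^{*}_{s}$), but since your $\theta\tau/\tau_{0}$ equals the paper's $\theta$ the H\"older exponents and the exponent bookkeeping are literally identical.
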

\begin{proof}
It is sufficient to prove for  $\tau \in \left( \frac{dp+1}{1-(k-d)}, p^{*}_{s} \right]$. For  $\tau \in  \left[ p, \frac{dp+1}{1-(k-d)} \right]$, it follows from Lemma  \ref{flat case sp=k 1}. Let  $\tau_{1} = \frac{dp+1}{1-(k-d)}$, using Lemma  \ref{flat case sp=k 1}, we have
\begin{equation*}
        \left(  \bigintsss_{\Omega_{n}} \frac{|u(x)|^{\tau_{1}}}{|x_{k}|^{\alpha_{\tau_{1}}} \ln^{\tau_{1} -1} \left(\frac{2}{|x_{k}|} \right)} \,  dx \right)^{\frac{1}{\tau_{1}}} \leq  C \|u\|_{W^{s, p}(\Omega_{n})} ,
    \end{equation*}
where  $\alpha_{\tau_{1}} = d+ (k-d) \frac{\tau_{1}}{p} = \frac{dp+ (k-d)}{p(1-(k-d))}$. Also, from  \eqref{Sobineqq}, we have
    \begin{equation*}
        \left(  \int_{\Omega_{n}} |u(x)|^{p^{*}_{s}} \,  dx \right)^{\frac{1}{p^{*}_{s}}} \leq  C \|u\|_{W^{s, p}(\Omega_{n})} .
    \end{equation*}
    For any  $\tau \in (\tau_{1}, p^{*}_{s}]$, there exists  $\theta \in [0,1)$ such that  $\tau = \theta \tau_{1} + (1- \theta)p^{*}_{s}$. Let  $a_{1} ,b_{1} \geq 0$, then by using H$\ddot{\text{o}}$lder's inequality with  $\frac{1}{1/ \theta}+ \frac{1}{1/(1- \theta)}= 1$, we obtain
    \begin{align*}
        \bigintsss_{\Omega_{n}} \frac{|u(x)|^{\tau}}{ |x_{k}|^{a_{1}} \ln^{b_{1}}\left( \frac{2}{|x_{k}|} \right)} \, dx & =  \bigintsss_{\Omega_{n}} \frac{|u(x)|^{\theta \tau_{1} + (1- \theta)p^{*}_{s}}}{ |x_{k}|^{a_{1}} \ln^{b_{1}}\left( \frac{2}{|x_{k}|} \right) } \, dx \\ & \leq  \Bigg( \bigintsss_{\Omega_{n}} \frac{|u(x)|^{\tau_{1}}}{ |x_{k}|^{\frac{a_{1}}{\theta}} \ln^{\frac{b_{1}}{\theta}}\left( \frac{2}{|x_{k}|} \right)} \,  dx \Bigg)^{\theta}  \Bigg( \int_{\Omega_{n}} |u(x)|^{p^{*}_{s}} \, dx \Bigg)^{1- \theta} .
    \end{align*}
    Let  $\frac{a_{1}}{\theta} = \alpha_{\tau_{1}}$ and  $\frac{b_{1}}{\theta} = \tau_{1} -1 $, we have
    \begin{align*}
         \bigintsss_{\Omega_{n}} \frac{|u(x)|^{\tau}}{ |x_{k}|^{a_{1}} \ln^{b_{1}}\left( \frac{2}{|x_{k}|} \right)} \, dx  & \leq  \Bigg( \bigintsss_{\Omega_{n}} \frac{|u(x)|^{\tau_{1}}}{ |x_{k}|^{ \alpha_{\tau_{1}} }  \ln^{\tau_{1}-1} \left( \frac{2}{|x_{k}|} \right)} \,  dx \Bigg)^{\theta} \Bigg( \int_{\Omega_{n}} |u(x)|^{p^{*}_{s}} \, dx \Bigg)^{1- \theta} \\ & \leq C \|u\|^{\theta \tau_{1}}_{W^{s, p}(\Omega_{n})}  \|u\|^{(1- \theta) p^{*}_{s}}_{W^{s, p}(\Omega_{n})}  \leq
         C \|u\|^{\tau}_{W^{s, p}(\Omega_{n})}.
    \end{align*}
    From the definition of  $\theta$, we obtain  $a_{1} = \alpha$ and  $b_{1} = \beta$. Therefore, we have
    \begin{equation*}
       \Bigg( \bigintsss_{\Omega_{n}} \frac{|u(x)|^{\tau}}{ |x_{k}|^{\alpha} \ln^{\beta}\left( \frac{2}{|x_{k}|} \right)} \,  dx \Bigg)^{\frac{1}{\tau}} \leq C \|u\|_{W^{s, p}(\Omega_{n})},
    \end{equation*}
    for  $\tau \in \left( \frac{dp+1}{1-(k-d)} , p^{*}_{s} \right]$.
\end{proof}

The next lemma proves Theorem  \ref{theorem 2 sp<k} and establishes the fractional Hardy-type inequality for the case  $sp<k$, where $1<k<d, ~ k \in \mathbb{N}$, when the domain is  $\Omega_{n}$ and  $K= \{ (x_{k}, x_{d-k}) \in \mathbb{R}^{d} : x_{k} = 0 \ \text{and} \ x_{d-k} \in [-n,n]^{d-k} \}$. 

\begin{lemma}\label{flat case sp<k}
     Let  $1<k<d$ with $k \in \mathbb{N},  ~p>1$ and  $ sp<k$. For any  $\tau \in [ p, p^{*}_{s}] $ and  $x= (x_{k},x_{d-k}) \in \Omega_{n}$, with $x_{k} \in \mathbb{R}^{k}$, there exists a constant  $C= C(d,p,s, \tau,k) > 0$  such that
    \begin{equation}
     \left(  \bigintssss_{\Omega_{n}} \frac{|u(x)|^{\tau}}{|x_{k}|^{\alpha} } \, dx \right)^{\frac{1}{\tau}} \leq  C 
 \|u\|_{W^{s, p}(\Omega_{n})}, \quad \forall \ u \in W^{s,p}_{0}(\Omega_n \setminus K).
    \end{equation}
\end{lemma}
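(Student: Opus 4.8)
The plan is to rerun the dyadic‐decomposition scheme of Lemma~\ref{flat case sp=k 1}, now taking advantage of the strict inequality $sp<k$, which forces $\alpha<k$ (indeed $0\le\alpha<k$: one has $\alpha=sp$ at $\tau=p$ and $\alpha=0$ at $\tau=p^{*}_{s}$, with $\alpha$ monotone in $\tau$). This strict gap is exactly what lets a genuine geometric decay replace the borderline behaviour that necessitated the logarithmic weight in the critical case. First I would reduce to $u\in W^{s,p}(\Omega_{n})\cap C(\Omega_{n})$ vanishing in a neighbourhood of $K$, the general case following from the definition of $W^{s,p}_{0}(\Omega_{n}\setminus K)$ together with Fatou's lemma. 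Writing $\Omega_{n}=\bigcup_{\ell\le-1}A_{\ell}=\bigcup_{\ell\le-1}\bigcup_{i=1}^{\sigma_{\ell}}A^{i}_{\ell}$, Lemma~\ref{sumineqlemma} gives $\int_{A^{i}_{\ell}}|u|^{\tau}|x_{k}|^{-\alpha}\,dx\le C[u]^{\tau}_{W^{s, p}(A^{i}_{\ell})}+C\,2^{\ell(d-\alpha)}|(u)_{A^{i}_{\ell}}|^{\tau}$; summing over $i$, using \eqref{sumineq} with $\gamma=\tau/p\ge1$ and disjointness of the $A^{i}_{\ell}$ yields
\[
\int_{A_{\ell}}\frac{|u(x)|^{\tau}}{|x_{k}|^{\alpha}}\,dx\le C[u]^{\tau}_{W^{s, p}(A_{\ell})}+C\,2^{\ell(d-\alpha)}S_{\ell},\qquad S_{\ell}:=\sum_{i=1}^{\sigma_{\ell}}|(u)_{A^{i}_{\ell}}|^{\tau}.
\]
So the whole problem reduces to bounding $\sum_{\ell\le-1}2^{\ell(d-\alpha)}S_{\ell}$ by $\|u\|_{W^{s, p}(\Omega_{n})}^{\tau}$.

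The engine is a one‑step recursion relating level $\ell$ to level $\ell+1$. For each $A^{i}_{\ell}$ let $A^{j}_{\ell+1}$ be the cube lying directly above it. By Lemma~\ref{estimate} with $c=1+\eps$, $|(u)_{A^{i}_{\ell}}|^{\tau}\le(1+\eps)|(u)_{A^{j}_{\ell+1}}|^{\tau}+C_{\eps}\,|(u)_{A^{i}_{\ell}}-(u)_{A^{j}_{\ell+1}}|^{\tau}$, and Lemma~\ref{est2} controls the last term by $C\,2^{\ell(sp-d)\tau/p}[u]^{\tau}_{W^{s, p}(A^{i}_{\ell}\cup A^{j}_{\ell+1})}$. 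Summing over the $2^{d-k}$ cubes below a fixed $A^{j}_{\ell+1}$, then over $j$, and using $\sum_{i}(\,\cdot\,)^{\tau/p}\le(\sum_{i}\cdot)^{\tau/p}$ with the bounded overlap of the sets $A^{i}_{\ell}\cup A^{j}_{\ell+1}$, one gets
\[
S_{\ell}\le 2^{d-k}(1+\eps)\,S_{\ell+1}+C\,2^{\ell(sp-d)\tau/p}[u]^{\tau}_{W^{s, p}(A_{\ell}\cup A_{\ell+1})}.
\]
Multiplying by $2^{\ell(d-\alpha)}$ and using $\alpha=d+(sp-d)\tfrac{\tau}{p}$, the factor $2^{\ell(d-\alpha)}2^{\ell(sp-d)\tau/p}$ equals $1$, so with $T_{\ell}:=2^{\ell(d-\alpha)}S_{\ell}$ the recursion becomes $T_{\ell}\le\rho\,T_{\ell+1}+C[u]^{\tau}_{W^{s, p}(A_{\ell}\cup A_{\ell+1})}$, where $\rho=(1+\eps)\,2^{(d-k)-(d-\alpha)}=(1+\eps)\,2^{\alpha-k}$. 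Because $\alpha<k$, I fix $\eps>0$ small enough that $\rho<1$.

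Now I would sum the recursion from $\ell=m$ to $\ell=-1$: since $\rho<1$, rearranging gives $\sum_{\ell=m}^{-1}T_{\ell}\le C_{\rho}\bigl(T_{-1}+\sum_{\ell=m}^{-2}[u]^{\tau}_{W^{s, p}(A_{\ell}\cup A_{\ell+1})}\bigr)$. The top level is harmless: $T_{-1}\le C\,S_{-1}\le C\sum_{i}\int_{A^{i}_{-1}}|u|^{\tau}\,dx\le C\|u\|_{L^{\tau}(\Omega_{n})}^{\tau}\le C\|u\|_{W^{s, p}(\Omega_{n})}^{\tau}$ by Hölder and the fractional Sobolev embedding \eqref{Sobineqq} (here $sp<k<d$, and $\tau\le p^{*}_{s}$); and bounded overlap of $\{A_{\ell}\cup A_{\ell+1}\}$ with \eqref{sumineq} gives $\sum_{\ell}[u]^{\tau}_{W^{s, p}(A_{\ell}\cup A_{\ell+1})}\le\bigl(\sum_{\ell}[u]^{p}_{W^{s, p}(A_{\ell}\cup A_{\ell+1})}\bigr)^{\tau/p}\le C[u]^{\tau}_{W^{s, p}(\Omega_{n})}$, and likewise $\sum_{\ell}[u]^{\tau}_{W^{s, p}(A_{\ell})}\le[u]^{\tau}_{W^{s, p}(\Omega_{n})}$. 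Combining with the first display, $\sum_{\ell=m}^{-1}\int_{A_{\ell}}|u|^{\tau}|x_{k}|^{-\alpha}\,dx\le C\|u\|_{W^{s, p}(\Omega_{n})}^{\tau}$ uniformly in $m$; letting $m\to-\infty$ by monotone convergence and taking $\tau$‑th roots gives the stated inequality on $W^{s,p}(\Omega_{n})\cap C(\Omega_{n})$, and the density/Fatou argument mentioned above extends it to all of $W^{s,p}_{0}(\Omega_{n}\setminus K)$.

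The main obstacle is securing a contraction constant $\rho<1$ in the level‑to‑level recursion. The crude bound $|a+b|^{\tau}\le 2^{\tau-1}(|a|^{\tau}+|b|^{\tau})$ would only give $\rho=2^{\tau-1+\alpha-k}$, which exceeds $1$ for many admissible $\tau\in[p,p^{*}_{s}]$ (already when $sp\ge d-p$); it is precisely the refined splitting of Lemma~\ref{estimate}, which lets the coefficient of $|(u)_{A^{j}_{\ell+1}}|^{\tau}$ be pushed arbitrarily close to $1$, that converts the strict inequality $\alpha<k$ into usable geometric decay $\rho\approx 2^{\alpha-k}<1$. (In the critical case $sp=k$ one instead has $\alpha=k$ at $\tau=p$, so this device only reaches $\rho\approx1$, which is exactly why Lemma~\ref{flat case sp=k 1} must carry the logarithmic weight.) The remaining steps — the overlap bookkeeping for the seminorm terms and the $L^{\tau}$ bound on the top level — are routine.
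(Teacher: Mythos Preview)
Your proposal is correct and follows essentially the same dyadic scheme as the paper: Lemma~\ref{sumineqlemma} on each $A^{i}_{\ell}$, the recursion from Lemmas~\ref{estimate} and~\ref{est2}, and the contraction enabled by $\alpha<k$. The only cosmetic differences are that the paper applies Lemma~\ref{estimate} with the explicit choice $c=c_{1}2^{k-\alpha}$, $c_{1}=\tfrac{2}{1+2^{k-\alpha}}$ (giving contraction constant $c_{1}$) rather than your $c=1+\eps$, and it bounds the top level $S_{-1}$ by $\|u\|_{L^{p}(\Omega_{n})}^{\tau}$ via H\"older instead of invoking the Sobolev embedding; the remaining bookkeeping (choosing $-m$ large so that $(u)_{A^{i}_{m}}=0$, and the overlap estimate~\eqref{se1}) is identical.
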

\begin{proof}
Let  $u \in W^{s,p}(\Omega_{n}) \cap C(\Omega_{n})$ be such that  $u=0$ in a neighbourhood of $K$. From Lemma  \ref{sumineqlemma}, we have
\begin{equation*}
     \int_{A^{i}_{\ell}} \frac{|u(x)|^{\tau}}{|x_{k}|^{\alpha}} \,  dx \leq C [u]^{\tau}_{W^{s, p}(A^{i}_{\ell})} + C 2^{\ell (d-\alpha)} |(u)_{A^{i}_{\ell}}|^{\tau}  .
\end{equation*}
By summing the above inequality from  $i=1$ to  $\sigma_{\ell}$, and using  \eqref{seminorm ineq relation}, we obtain
\begin{align*}
    \int_{A_{\ell}} \frac{|u(x)|^{\tau}}{|x_{k}|^{\alpha}} \, dx & \leq C   \sum_{i=1}^{\sigma_{\ell}} [u]^{\tau}_{W^{s, p}(A^{i}_{\ell})} + C 2^{\ell (d-\alpha)}  \sum_{i=1}^{\sigma_{\ell}} |(u)_{A^{i}_{\ell}}|^{\tau}  \\ &
    \leq C [u]^{\tau}_{W^{s, p}(A_{\ell})} +  C 2^{\ell (d-\alpha)}  \sum_{i=1}^{\sigma_{\ell}} |(u)_{A^{i}_{\ell}}|^{\tau}.
\end{align*}
Again, summing the above inequality from  $\ell=m \in \mathbb{Z}^{-}$ to  $-1$, we get
\begin{equation}\label{eqnn1}
\sum_{\ell=m}^{-1} \int_{A_{\ell}} \frac{|u(x)|^{\tau}}{|x_{k}|^{\alpha}} \,  dx \leq  C \sum_{\ell=m}^{-1} [u]^{\tau}_{W^{s, p}(A_{\ell})} + C \sum_{\ell=m}^{-1} 2^{\ell (d-\alpha)} \sum_{i=1}^{\sigma_{\ell}} |(u)_{A^{i}_{\ell}}|^{\tau}  .
\end{equation} 
For any  $\tau \in \left[p, p^{*}_{s} \right] $, we have  $\alpha \in [0,sp]$ and we know that  $sp< k$. Let  $A^{j}_{\ell+1}$ be such that it satisfies  \eqref{codn on Al and Al+1}. Using triangle inequality, we have 
\begin{equation*}
    |(u)_{A^{i}_{\ell}}|^\tau \leq  \left( |(u)_{A^{j}_{\ell+1}}| + |(u)_{A^{i}_{\ell}} - (u)_{A^{j}_{\ell+1}}| \right)^\tau.
\end{equation*}
For  $1<k<d$ with $ k \in \mathbb{N}$ and  $sp<k$, applying Lemma  \ref{estimate} with  $c :=c_{1}2^{k - \alpha}>1$, where  $c_{1} = \frac{2}{1+ 2^{k- \alpha}} <1$ together with Lemma  \ref{est2}, we obtain
\begin{equation*}
    |(u)_{A^{i}_{\ell}}|^{\tau} \leq c_{1} 2^{k- \alpha} |(u)_{A^{j}_{\ell+1}}|^{\tau} + C  2^{\ell \left(sp-d \right) \frac{ \tau}{p}} [u]^{\tau}_{W^{s, p}(A^{i}_{\ell} \cup A^{j}_{\ell+1})}  .
\end{equation*}
Multiplying the above inequality by  $2^{\ell (d-\alpha)}$, and using the definition of  $\alpha$, we get
\begin{equation*}
   2^{\ell (d-\alpha)} |(u)_{A^{i}_{\ell}}|^{\tau} \leq c_{1} 2^{\ell(d-\alpha) + (k-\alpha)}|(u)_{A^{j}_{\ell+1}}|^{\tau} + C [u]^{\tau}_{W^{s, p}(A^{i}_{\ell} \cup A^{j}_{\ell+1})}  .
\end{equation*}
Since there are  $2^{d-k}$ such  $A^{i}_{\ell}$'s that satisfy \eqref{codn on Al and Al+1}, summing the above inequality from  $i=2^{d-k}(j-1)+1$ to $2^{d-k}j$, we obtain
\begin{align*}
     2^{\ell (d-\alpha)}  \sum_{i=2^{d-k}(j-1)+1}^{2^{d-k}j} |(u)_{A^{i}_{\ell}}|^{\tau} & \leq c_{1} 2^{d-k} 2^{\ell(d-\alpha) + (k- \alpha)} |(u)_{A^{j}_{\ell+1}}|^{\tau} \\ & \quad
       + \ C  \sum_{i=2^{d-k}(j-1)+1}^{2^{d-k}j} [u]^{\tau}_{W^{s, p}(A^{i}_{\ell} \cup A^{j}_{\ell+1})}  .
\end{align*}
Again, summing the above inequality from  $j=1$ to  $\sigma_{\ell+1}$, and using  \eqref{summation relation}, we obtain
\begin{align}\label{ineqn100}
2^{\ell (d-\alpha)}  \sum_{i=1}^{\sigma_{\ell}} |(u)_{A^{i}_{\ell}}|^{\tau} & \leq c_{1} 2^{(\ell+1)(d-\alpha)} \sum_{j=1}^{\sigma_{\ell+1}} |(u)_{A^{j}_{\ell+1}}|^{\tau} \nonumber \\ &
 \quad + \ C \sum_{j=1}^{\sigma_{\ell+1}} \Bigg( \sum_{i=2^{d-k}(j-1)+1}^{2^{d-k}j}  [u]^{\tau}_{W^{s, p}(A^{i}_{\ell} \cup A^{j}_{\ell+1})} \Bigg).
\end{align}
Applying  \eqref{sumineq} with  $\gamma = \frac{\tau}{p}$ on the last term of the above equation, we have
\begin{align}\label{ineqn22}
    \sum_{j=1}^{\sigma_{\ell+1}} \Bigg( \sum_{i=2^{d-k}(j-1)+1}^{2^{d-k}j}  [u]^{\tau}_{W^{s,p}(A^{i}_{\ell} \cup A^{j}_{\ell+1})} \Bigg) & \leq \Bigg( \sum_{j=1}^{\sigma_{\ell+1}} \Bigg( \sum_{i=2^{d-k}(j-1)+1}^{2^{d-k}j}  [u]^{p}_{W^{s,p}(A^{i}_{\ell} \cup A^{j}_{\ell+1})} \Bigg) \Bigg)^{\frac{\tau}{p}} \nonumber \\ &  \leq [u]^{\tau}_{W^{s,p}(A_{\ell} \cup A_{\ell+1})} .
\end{align}
Therefore, we obtain
\begin{equation*}
  2^{\ell (d-\alpha)} \sum_{i=1}^{\sigma_{\ell}} |(u)_{A^{i}_{\ell}}|^{\tau}  \leq c_{1} 2^{(\ell+1)(d-\alpha)} \sum_{j=1}^{\sigma_{\ell+1}} |(u)_{A^{j}_{\ell+1}}|^{\tau} + C  [u]^{\tau}_{W^{s, p}(A_{\ell} \cup A_{\ell+1})}  .
\end{equation*}
For simplicity, let  $a_{\ell} = \sum_{i=1}^{\sigma_{\ell}} |(u)_{A^{i}_{\ell}}|^{\tau}$. Then, the above inequality will become
\begin{equation*}
    2^{\ell (d-\alpha)} a_{\ell} \leq c_{1} 2^{(\ell+1)(d-\alpha)} a_{\ell+1} + C  [u]^{\tau}_{W^{s, p}(A_{\ell} \cup A_{\ell+1})}  .
\end{equation*}
Summing the above inequality from  $\ell=m \in \mathbb{Z}^{-}$ to  $-2$, we get
\begin{equation*}
   \sum_{\ell=m}^{-2} 2^{\ell (d-\alpha)} a_{\ell} \leq \sum_{\ell=m}^{-2} c_{1} 2^{(\ell+1)(d-\alpha)} a_{\ell+1} + C  \sum_{\ell=m}^{-2} [u]^{\tau}_{W^{s, p}(A_{\ell} \cup A_{\ell+1})}  .
\end{equation*}
By changing sides, rearranging, and re-indexing, we get
\begin{equation*}
   2^{m(d-\alpha)} a_{m} + (1-c_{1}) \sum_{\ell=m+1}^{-2} 2^{\ell (d-\alpha)} a_{\ell}  \leq  C a_{-1} 
    +  C \sum_{\ell=m}^{-2} [u]^{\tau}_{W^{s, p}(A_{\ell} \cup A_{\ell+1})}.
\end{equation*}
Adding  $2^{(\alpha -d)}a_{-1}$ on  both sides and choosing  $-m$ large enough such that  $|(u)_{A^{j}_{m}}|=0$ for all  $j \in \{ 1, \dots, \sigma_{m} \}$, we have, for some constant  $C= C(d,p,s, \tau,k)>0$,
\begin{equation*}
    (1-c_{1}) \sum_{\ell=m}^{-1} 2^{\ell (d-\alpha)} a_{\ell} \leq C a_{-1} +  C \sum_{\ell=m}^{-2} [u]^{\tau}_{W^{s, p}(A_{\ell} \cup A_{\ell+1})}  .
\end{equation*}
Putting the value of  $a_{\ell}$ in the above inequality, we obtain
\begin{equation}\label{eqnn99}
 (1-c_{1})   \sum_{\ell=m}^{-1} 2^{\ell (d-\alpha)} \sum_{i=1}^{\sigma_{\ell}} |(u)_{A^{i}_{\ell}}|^{\tau} \leq C \sum_{j=1}^{\sigma_{-1}} |(u)_{A^{j}_{-1}}|^{\tau} + C \sum_{\ell=m}^{-2} [u]^{\tau}_{W^{s, p}(A_{\ell} \cup A_{\ell+1})}  .
\end{equation}
Combining  \eqref{eqnn1} and  \eqref{eqnn99} yields
\begin{equation}\label{eqqnnn}
    \sum_{\ell=m}^{-1} \int_{A_{\ell}} \frac{|u(x)|^{\tau}}{|x_{k}|^{\alpha} } \,  dx \leq  C \sum_{j=1}^{\sigma_{-1}} |(u)_{A^{j}_{-1}}|^{\tau} +  C \sum_{\ell=m}^{-2} [u]^{\tau}_{W^{s, p}(A_{\ell} \cup A_{\ell+1})}  .
\end{equation}
Using H$\ddot{\text{o}}$lder's inequality with  $ \frac{1}{p} +  \frac{1}{p'} = 1$, we have
\begin{equation*}
      \sum_{j=1}^{\sigma_{-1}} |(u)_{A^{j}_{-1}}|^{\tau} =   \sum_{j=1}^{\sigma_{-1}} \Bigg| \frac{1}{|A^{j}_{-1}|} \int_{A^{j}_{-1}} u(x) \, dx \Bigg|^{\tau}
     \leq  C \sum_{j=1}^{\sigma_{-1}} \Bigg| \int_{A^{j}_{-1}} |u(x)|^{p} \, dx \Bigg|^{\frac{\tau}{p}}.
\end{equation*}
Now applying  \eqref{sumineq} with  $\gamma= \frac{\tau}{p}$, we obtain
\begin{equation*}
    \sum_{j=1}^{\sigma_{-1}} \Bigg| \int_{A^{j}_{-1}} |u(x)|^{p} \, dx \Bigg|^{\frac{\tau}{p}} \leq   \Bigg| \sum_{j=1}^{\sigma_{-1}}\int_{A^{j}_{-1}} |u(x)|^{p} \, dx \Bigg|^{\frac{\tau}{p}}  \leq 
    \Bigg| \int_{\Omega_{n}} |u(x)|^{p} \, dx \Bigg|^{\frac{\tau}{p}} \leq  \|u\|^{\tau}_{L^{p}(\Omega_{n})}.
\end{equation*}
Therefore, we have
\begin{equation*}
     \sum_{\ell=m}^{-1} \int_{A_{\ell}} \frac{|u(x)|^{\tau}}{|x_{k}|^{\alpha} } \, dx \leq  C \left( \|u\|^{\tau}_{L^{p}(\Omega_{n})} +  \sum_{\ell=m}^{-2} [u]^{\tau}_{W^{s, p}(A_{\ell} \cup A_{\ell+1})} \right)  .
\end{equation*}
Applying  \eqref{sumineq} with  $\gamma = \frac{\tau}{p}$ and from  \eqref{se1} (see Appendix  \ref{appendix}), we have
\begin{equation}\label{eqnn 111}
    \sum_{\ell=m}^{-2} [u]^{\tau}_{W^{s, p}(A_{\ell} \cup A_{\ell+1})} \leq  \left( \sum_{\ell=m}^{-2} [u]^{p}_{W^{s, p}(A_{\ell} \cup A_{\ell+1})} \right)^{\frac{\tau}{p}} \leq 2^{\frac{\tau}{p}} [u]^{\tau}_{W^{s,p}(\Omega_{n})}.
\end{equation}
Combining the above two inequalities, and using the fact that  $\tau \geq p$, we obtain
 \begin{equation*}
      \left( \bigintssss_{\Omega_{n}} \frac{|u(x)|^{\tau}}{|x_{k}|^{\alpha} } \,  dx \right)^{\frac{1}{\tau}} \leq C \left( [u]^{\tau}_{W^{s, p}(\Omega_{n})} + \|u\|^{\tau}_{L^{p}(\Omega_{n})} \right)^{\frac{1}{\tau}}  
       \leq C \left( [u]^{p}_{W^{s, p}(\Omega_{n})} + \|u\|^{p}_{L^{p}(\Omega_{n})} \right)^{\frac{1}{p}} .
 \end{equation*}
 This finishes the proof of the lemma.
\end{proof}

The next lemma proves Theorem  \ref{theorem 3 sp>k} and establishes the fractional Hardy-type inequality for the case  $sp>k$, where $1<k<d, ~ k \in \mathbb{N}$ when the domain is  $\Omega_{n}$ and $K= \{ (x_{k}, x_{d-k}) \in \mathbb{R}^{d} : x_{k} = 0 \ \text{and} \ x_{d-k} \in [-n,n]^{d-k} \}$. 

\begin{lemma}\label{flat case sp>k a}
     Let  $ 1<k<d$ with $k \in \mathbb{N}$ and  $sp>k$. For any  $\tau \geq p$ when  $sp=d$,  and  $\tau \in \left[p,p^{*}_{s} - \frac{kp}{d-sp} \right) $ when  $sp<d$, and  $x= (x_{k}, x_{d-k}) \in \Omega_{n}$, with $x_{k} \in \mathbb{R}^{k}$, there exists a constant  $C= C(d,p,s,\tau, k) > 0$  such that
    \begin{equation}
     \left(  \bigintssss_{\Omega_{n}} \frac{|u(x)|^{\tau}}{|x_{k}|^{\alpha} } \,  dx \right)^{\frac{1}{\tau}} \leq  C 
 [u]_{W^{s, p}(\Omega_{n})}, \quad \forall \ u \in W^{s,p}_{0}(\Omega_n \setminus K).
    \end{equation}
\end{lemma}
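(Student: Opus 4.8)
The plan is to run the same dyadic-annulus machinery used in Lemma~\ref{flat case sp<k}, but with the roles of the "inner" and "outer" levels in the key recursion interchanged, so that the leftover term is pushed \emph{towards} $K$, where our functions vanish, rather than towards the outer boundary; this is exactly why no $L^p$ term survives on the right-hand side here. Concretely, fix $u\in W^{s,p}(\Omega_n)\cap C(\Omega_n)$ vanishing in a neighbourhood of $K$ (the general case follows by density in $W^{s,p}_0(\Omega_n\setminus K)$ and Fatou's lemma applied to the left-hand side), and choose $m_0\in\mathbb{Z}^-$ with $u\equiv 0$ on $\bigcup_{\ell\le m_0}A_\ell$. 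Write $a_\ell:=\sum_{i=1}^{\sigma_\ell}|(u)_{A^i_\ell}|^\tau$ and $b_\ell:=2^{\ell(d-\alpha)}a_\ell$, and note the identity $(sp-d)\tfrac{\tau}{p}=-(d-\alpha)$. Summing the conclusion of Lemma~\ref{sumineqlemma} over $i$ and then over $\ell$, and using \eqref{sumineq} with $\gamma=\tau/p$ exactly as for \eqref{eqnn1}, one obtains for every $m\le-1$
\begin{equation*}
\sum_{\ell=m}^{-1}\int_{A_\ell}\frac{|u(x)|^\tau}{|x_k|^\alpha}\,dx\le C\sum_{\ell=m}^{-1}[u]^\tau_{W^{s,p}(A_\ell)}+C\sum_{\ell=m}^{-1}b_\ell ,
\end{equation*}
and since the $A_\ell$ are pairwise disjoint, $\sum_\ell[u]^\tau_{W^{s,p}(A_\ell)}\le[u]^\tau_{W^{s,p}(\Omega_n)}$; thus everything reduces to the bound $\sum_{\ell=m}^{-1}b_\ell\le C[u]^\tau_{W^{s,p}(\Omega_n)}$, uniformly in $m$.

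For this I would set up the one-step recursion by estimating the \emph{outer} average $(u)_{A^j_{\ell+1}}$ in terms of the $2^{d-k}$ \emph{inner} averages $(u)_{A^i_\ell}$ sitting below it. For each such $i$, the triangle inequality together with Lemma~\ref{estimate} (with a parameter $c>1$ to be fixed) and Lemma~\ref{est2} gives
\begin{equation*}
|(u)_{A^j_{\ell+1}}|^\tau\le c\,|(u)_{A^i_\ell}|^\tau+C\,2^{\ell(sp-d)\frac{\tau}{p}}\,[u]^\tau_{W^{s,p}(A^i_\ell\cup A^j_{\ell+1})}.
\end{equation*}
Averaging over the $2^{d-k}$ admissible $i$, summing over $j$ (using \eqref{summation relation}), and controlling the resulting seminorm sum by \eqref{sumineq} and the bounded-overlap argument of \eqref{ineqn22}, one arrives at $a_{\ell+1}\le \tfrac{c}{2^{d-k}}\,a_\ell+C\,2^{\ell(sp-d)\tau/p}[u]^\tau_{W^{s,p}(A_\ell\cup A_{\ell+1})}$. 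Multiplying by $2^{(\ell+1)(d-\alpha)}$ and using $(sp-d)\tfrac{\tau}{p}=-(d-\alpha)$ converts this into the clean recursion
\begin{equation*}
b_{\ell+1}\le c\,2^{\,k-\alpha}\,b_\ell+C\,[u]^\tau_{W^{s,p}(A_\ell\cup A_{\ell+1})}.
\end{equation*}

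Here the hypothesis on $\tau$ enters decisively. When $sp<d$, the stated interval $\tau\in[p,\,p^*_s-kp/(d-sp))=[p,\,(d-k)p/(d-sp))$ is precisely the range for which $\alpha=d+(sp-d)\tfrac{\tau}{p}>k$, hence $2^{k-\alpha}<1$; when $sp=d$ one has $\alpha=sp=d$, the factor in front of $b_\ell$ is $c\,2^{-(d-k)}$, and $2^{-(d-k)}<1$ since $k<d$, which is why any $\tau\ge p$ is allowed. In either case fix $c>1$ so close to $1$ that $\rho:=c\,2^{k-\alpha}<1$. Summing $b_{\ell+1}\le\rho\,b_\ell+C[u]^\tau_{W^{s,p}(A_\ell\cup A_{\ell+1})}$ over $\ell$ from $m$ to $-2$, re-indexing the left side, using $b_m=0$ for $m\le m_0$ and the overlapping covering estimate $\sum_\ell[u]^\tau_{W^{s,p}(A_\ell\cup A_{\ell+1})}\le C[u]^\tau_{W^{s,p}(\Omega_n)}$ (cf. \eqref{se1}), one gets $\sum_{\ell=m}^{-1}b_\ell\le\rho\sum_{\ell=m}^{-1}b_\ell+C[u]^\tau_{W^{s,p}(\Omega_n)}$, and therefore $\sum_{\ell=m}^{-1}b_\ell\le\frac{C}{1-\rho}[u]^\tau_{W^{s,p}(\Omega_n)}$. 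Combining with the first display, letting $m\to-\infty$ by monotone convergence, and taking $\tau$-th roots yields the inequality for $u$ as above; density then gives it on all of $W^{s,p}_0(\Omega_n\setminus K)$. I expect the only real difficulty to be organisational rather than conceptual: making sure the recursion runs in the correct direction (towards $K$ rather than towards the outer boundary of $\Omega_n$) and verifying that $(d-k)p/(d-sp)$ is exactly the exponent at which $2^{k-\alpha}$ crosses $1$; the rest is the same bookkeeping with \eqref{sumineq}, Lemma~\ref{estimate}, Lemma~\ref{est2} and the covering estimate \eqref{se1} already used in Lemmas~\ref{flat case sp=k 1}--\ref{flat case sp<k}.
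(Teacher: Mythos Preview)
Your proposal is correct and follows essentially the same route as the paper's own proof: the same dyadic decomposition via Lemma~\ref{sumineqlemma}, the same reversal of the recursion direction (bounding $|(u)_{A^j_{\ell+1}}|$ by $|(u)_{A^i_\ell}|$ rather than the other way around) using Lemmas~\ref{estimate} and~\ref{est2}, and the same telescoping argument with $b_m=0$ replacing the $L^p$ leftover. The only cosmetic difference is that the paper fixes the specific choice $c=c_1 2^{\alpha-k}$ with $c_1=\tfrac{2}{1+2^{\alpha-k}}$, while you leave $c>1$ free and then pick it so that $\rho=c\,2^{k-\alpha}<1$; these are equivalent, and your explicit identification of the range $\tau<(d-k)p/(d-sp)$ as exactly the condition $\alpha>k$ is a helpful clarification.
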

\begin{proof}
Let  $u \in W^{s,p}(\Omega_{n}) \cap C(\Omega_{n})$ be such that  $u=0$ in a neighbourhood of  $K$. For any  $\tau \in\left[p,p^{*}_{s} - \frac{kp}{d-sp} \right) $ and  $sp<d$, we have  $\alpha \in (k, sp]$. Let  $A^{j}_{\ell+1}$ be such that it satisfies  \eqref{codn on Al and Al+1}. Using triangle inequality, we have 
\begin{equation*}
    |(u)_{A^{j}_{\ell+1}}|^\tau \leq  \left( |(u)_{A^{i}_{\ell}}| + |(u)_{A^{i}_{\ell}} - (u)_{A^{j}_{\ell+1}}| \right)^\tau.
\end{equation*}
For  $1<k<d$ with $k \in \mathbb{N}$ and  $sp>k$, applying Lemma  \ref{estimate} with  $c :=c_{1}2^{\alpha-k}>1$, where  $c_{1} = \frac{2}{1+ 2^{\alpha-k}} <1$ together with Lemma  \ref{est2}, we obtain
\begin{equation*}
    |(u)_{A^{j}_{\ell+1}}|^{\tau} \leq c_{1} 2^{\alpha-k} |(u)_{A^{i}_{\ell}}|^{\tau} + C  2^{\ell(sp-d) \frac{\tau}{p}} [u]^{\tau}_{W^{s, p}(A^{i}_{\ell} \cup A^{j}_{\ell+1})}  .
\end{equation*}
Multiplying the above inequality by  $2^{(\ell+1)(d-\alpha)-d+k}$, and using the definition of  $\alpha$, we get
\begin{equation*}
   2^{(\ell+1)(d-\alpha)-d+k} |(u)_{A^{j}_{\ell+1}}|^{\tau} \leq c_{1} 2^{\ell (d-\alpha)}|(u)_{A^{i}_{\ell}}|^{\tau} + C [u]^{\tau}_{W^{s, p}(A^{i}_{\ell} \cup A^{j}_{\ell+1})}  .
\end{equation*}
Since there are  $2^{d-k}$ such  $A^{i}_{\ell}$'s that satisfy \eqref{codn on Al and Al+1}, summing the above inequality from  $i=2^{d-k}(j-1)+1$ to  $2^{d-k}j$, we obtain
\begin{align*}
     2^{d-k}2^{(\ell+1)(d-\alpha)-d+k}  |(u)_{A^{j}_{\ell+1}}|^{\tau}  & \leq c_{1} 2^{\ell (d-\alpha)} \sum_{i=2^{d-k}(j-1)+1}^{2^{d-k}j}  |(u)_{A^{i}_{\ell}}|^{\tau} \\ & \quad
       + \ C  \sum_{i=2^{d-k}(j-1)+1}^{2^{d-k}j} [u]^{\tau}_{W^{s, p}(A^{i}_{\ell} \cup A^{j}_{\ell+1})}  .
\end{align*}
Again, summing the above inequality from  $j=1$ to  $\sigma_{\ell+1}$, and using  \eqref{summation relation}, we obtain
\begin{align}\label{ineqn1}
2^{(\ell+1)(d-\alpha)}  \sum_{j=1}^{\sigma_{\ell+1}}  |(u)_{A^{j}_{\ell+1}}|^{\tau} & \leq c_{1} 2^{\ell (d-\alpha)} 
\sum_{i=1}^{\sigma_{\ell}} |(u)_{A^{i}_{\ell}}|^{\tau} \nonumber \\ & \quad
  + \ C \sum_{j=1}^{\sigma_{\ell+1}} \Bigg( \sum_{i=2^{d-k}(j-1)+1}^{2^{d-k}j}  [u]^{\tau}_{W^{s, p}(A^{i}_{\ell} \cup A^{j}_{\ell+1})} \Bigg).
\end{align}
From  \eqref{ineqn22}, we have
\begin{equation*}
     \sum_{j=1}^{\sigma_{\ell+1}} \Bigg( \sum_{i=2^{d-k}(j-1)+1}^{2^{d-k}j}  [u]^{\tau}_{W^{s, p}(A^{i}_{\ell} \cup A^{j}_{\ell+1})} \Bigg) \leq [u]^{\tau}_{W^{s, p}(A_{\ell} \cup A_{\ell+1})} .
\end{equation*}
Therefore, we obtain
\begin{equation*}
2^{(\ell+1)(d-\alpha)}  \sum_{j=1}^{\sigma_{\ell+1}}  |(u)_{A^{j}_{\ell+1}}|^{\tau} \leq c_{1} 2^{\ell (d-\alpha)} 
\sum_{i=1}^{\sigma_{\ell}} |(u)_{A^{i}_{\ell}}|^{\tau} + C  [u]^{\tau}_{W^{s, p}(A_{\ell} \cup A_{\ell+1})}  .
\end{equation*}
Summing the above inequality from  $\ell=m \in \mathbb{Z}^{-}$ to  $-2$, we get
\begin{equation*}
   \sum_{\ell=m}^{-2} 2^{(\ell+1)(d-\alpha)}  \sum_{j=1}^{\sigma_{\ell+1}}  |(u)_{A^{j}_{\ell+1}}|^{\tau} \leq c_{1} \sum_{\ell=m}^{-2}  2^{\ell (d-\alpha)} \sum_{i=1}^{\sigma_{\ell}} |(u)_{A^{i}_{\ell}}|^{\tau} + C  \sum_{\ell=m}^{-2} [u]^{\tau}_{W^{s, p}(A_{\ell} \cup A_{\ell+1})}  .
\end{equation*}
By changing sides, rearranging, and re-indexing, we get
\begin{equation*}
 (1-c_{1})   \sum_{\ell=m+1}^{-1} 2^{\ell (d-\alpha)} \sum_{i=1}^{\sigma_{\ell}} |(u)_{A^{i}_{\ell}}|^{\tau} \leq 2^{m(d- \alpha)} \sum_{j=1}^{\sigma_{m}} |(u)_{A^{j}_{m}}|^{\tau} + C \sum_{\ell=m}^{-2} [u]^{\tau}_{W^{s, p}(A_{\ell} \cup A_{\ell+1})}  .
\end{equation*}
Choose  $-m$ large enough such that  $|(u)_{A^{j}_{m}}|=0$ for all  $j \in \{ 1, \dots, \sigma_{m} \}$. Therefore, we have
\begin{equation}\label{eqnn}
    (1-c_{1})   \sum_{\ell=m}^{-1} 2^{\ell (d-\alpha)} \sum_{i=1}^{\sigma_{\ell}} |(u)_{A^{i}_{\ell}}|^{\tau} \leq C \sum_{\ell=m}^{-2} [u]^{\tau}_{W^{s, p}(A_{\ell} \cup A_{\ell+1})}  .
\end{equation}
Combining  \eqref{eqnn1} and  \eqref{eqnn} yields
\begin{equation*}
    \sum_{\ell=m}^{-1} \int_{A_{\ell}} \frac{|u(x)|^{\tau}}{|x_{k}|^{\alpha} } \, dx \leq   C \sum_{\ell=m}^{-2} [u]^{\tau}_{W^{s, p}(A_{\ell} \cup A_{\ell+1})}  .
\end{equation*}
Therefore, using  \eqref{eqnn 111}, we have
 \begin{equation*}
      \left( \int_{\Omega_{n}} \frac{|u(x)|^{\tau}}{|x_{k}|^{\alpha} } \,  dx \right)^{\frac{1}{\tau}} \leq C [u]_{W^{s, p}(\Omega_{n})}.
 \end{equation*}
 This proves the lemma.
\end{proof}

\smallskip

The following lemma proves Theorem \ref{theorem 1 sp=k} for the case $sp = k = d$. It establishes a fractional Hardy inequality with a singularity at the origin in a ball of radius $R > 0$ centered at $0$. This lemma also helps in obtaining a fractional Hardy inequality with a singularity at the origin in any bounded domain $\Omega$ containing $0$.

\begin{lemma}\label{flat case sp=k=d}
    Let  $d \geq1$ and  $sp=d$. Then  for any  $\tau \geq p$ and  $u \in W^{s,p}_{0}(B_{R}(0) \setminus \{0\})$ for some  $R>0$, we have
    \begin{equation}
    \left( \bigintsss_{B_{R}(0)} \frac{|u(x)|^{\tau}}{|x|^{d} \ln^{\tau} \left( \frac{2R}{|x|} \right) }  \, dx \right)^{\frac{1}{\tau}} \leq  C \|u\|_{W^{s,p}(B_{R}(0))}  ,
    \end{equation}
     where  $C= C(d,p, \tau, R)$ is a positive constant.  
\end{lemma}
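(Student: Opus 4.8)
The plan is to adapt the argument of Case~1 ($\alpha=k$) of Lemma~\ref{flat case sp=k 1} to the point singularity $K=\{0\}$, which is in fact simpler since the dyadic annuli around $0$ are already bounded and need no subdivision. By the definition of $W^{s,p}_{0}(B_{R}(0)\setminus\{0\})$ it suffices to prove the estimate for $u\in W^{s,p}(B_{R}(0))\cap C(B_{R}(0))$ vanishing in a neighbourhood of $0$; the general case then follows from Fatou's lemma, since convergence in $W^{s,p}$ forces convergence in $L^{p}$ and hence a.e.\ along a subsequence. For such a $u$, decompose $B_{R}(0)$ up to a null set into the annuli $A_{\ell}:=\{x:2^{\ell}R\le|x|<2^{\ell+1}R\}$, $\ell\le-1$. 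Each $A_{\ell}$ is the image of the fixed annulus $\{1\le|x|<2\}$ under $x\mapsto 2^{\ell}R\,x$, so Lemma~\ref{sobolev} with $sp=d$ (for which the scaling factor $\lambda^{sp-d}$ equals $1$) gives $\fint_{A_{\ell}}|u-(u)_{A_{\ell}}|^{\tau}\,dx\le C[u]^{\tau}_{W^{s,p}(A_{\ell})}$.

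On $A_{\ell}$ one has $|x|\ge 2^{\ell}R$ and $\ln(2R/|x|)\ge(-\ell)\ln 2$; splitting $u=(u-(u)_{A_{\ell}})+(u)_{A_{\ell}}$ and using $|A_{\ell}|\sim(2^{\ell}R)^{d}$ together with $(-\ell)^{-\tau}\le 1$ yields
\begin{equation*}
  \int_{A_{\ell}}\frac{|u(x)|^{\tau}}{|x|^{d}\ln^{\tau}(2R/|x|)}\,dx\le C[u]^{\tau}_{W^{s,p}(A_{\ell})}+\frac{C}{(-\ell)^{\tau}}|(u)_{A_{\ell}}|^{\tau}.
\end{equation*}
Summing over $\ell$ from some $m\in\mathbb{Z}^{-}$ to $-1$ and applying \eqref{sumineq} with $\gamma=\tau/p$ to the seminorm terms (which live on the disjoint products $A_{\ell}\times A_{\ell}$), the first group of terms is bounded by $C[u]^{\tau}_{W^{s,p}(B_{R}(0))}$, so the task reduces to controlling $\sum_{\ell=m}^{-1}(-\ell)^{-\tau}|(u)_{A_{\ell}}|^{\tau}$.

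The heart of the proof is a discrete weighted Hardy argument linking consecutive averages. Lemma~\ref{avg} with $E=A_{\ell}$, $F=A_{\ell+1}$ (for which $|A_{\ell}\cup A_{\ell+1}|/\min\{|A_{\ell}|,|A_{\ell+1}|\}\le C$), followed by Lemma~\ref{sobolev} applied to $A_{\ell}\cup A_{\ell+1}$, the image of $\{1\le|x|<4\}$ under $x\mapsto 2^{\ell}R\,x$ (again $sp=d$), gives $|(u)_{A_{\ell}}-(u)_{A_{\ell+1}}|^{\tau}\le C[u]^{\tau}_{W^{s,p}(A_{\ell}\cup A_{\ell+1})}$. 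Inserting $|(u)_{A_{\ell}}|\le|(u)_{A_{\ell+1}}|+|(u)_{A_{\ell}}-(u)_{A_{\ell+1}}|$ into Lemma~\ref{estimate} with $c=\left(\frac{-\ell}{-\ell-1/2}\right)^{\tau-1}>1$, for which $(1-c^{-1/(\tau-1)})^{1-\tau}=(2(-\ell))^{\tau-1}$, and dividing by $(-\ell)^{\tau-1}$ yields
\begin{equation*}
  \frac{|(u)_{A_{\ell}}|^{\tau}}{(-\ell)^{\tau-1}}\le\frac{|(u)_{A_{\ell+1}}|^{\tau}}{(-\ell-1/2)^{\tau-1}}+C[u]^{\tau}_{W^{s,p}(A_{\ell}\cup A_{\ell+1})}.
\end{equation*}
Summing from $\ell=m$ to $-2$ and re-indexing the first term on the right, the telescoping leaves $\sum_{\ell=m+1}^{-2}\left[(-\ell)^{-(\tau-1)}-(-\ell+1/2)^{-(\tau-1)}\right]|(u)_{A_{\ell}}|^{\tau}$ on the left once the term at $\ell=m$ is dropped (possible because $(u)_{A_{m}}=0$ for $-m$ large, since $u$ vanishes near $0$). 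Since $(-\ell)^{-(\tau-1)}-(-\ell+1/2)^{-(\tau-1)}\ge c_{0}(-\ell)^{-\tau}$ uniformly in $\ell\le-1$ (mean value theorem for $t\mapsto t^{-(\tau-1)}$, using $\tau>1$), adding $|(u)_{A_{-1}}|^{\tau}$ to both sides gives $\sum_{\ell=m}^{-1}(-\ell)^{-\tau}|(u)_{A_{\ell}}|^{\tau}\le C|(u)_{A_{-1}}|^{\tau}+C\sum_{\ell=m}^{-2}[u]^{\tau}_{W^{s,p}(A_{\ell}\cup A_{\ell+1})}$.

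To finish, Hölder's inequality gives $|(u)_{A_{-1}}|^{\tau}\le C\|u\|^{\tau}_{L^{p}(B_{R}(0))}$ (here $|A_{-1}|\sim R^{d}$ is fixed), while the overlapping-annuli sum $\sum_{\ell}[u]^{\tau}_{W^{s,p}(A_{\ell}\cup A_{\ell+1})}$ is controlled by $C[u]^{\tau}_{W^{s,p}(B_{R}(0))}$ via \eqref{sumineq} and \eqref{se1}. Combining the pieces, using $\tau\ge p$ so that $\|u\|^{\tau}_{W^{s,p}(B_{R}(0))}$ dominates both $[u]^{\tau}_{W^{s,p}(B_{R}(0))}$ and $\|u\|^{\tau}_{L^{p}(B_{R}(0))}$, then letting $m\to-\infty$ by monotone convergence and taking $\tau$-th roots, yields the inequality for $u$; density then concludes. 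The one genuinely delicate point is the discrete Hardy step: choosing the constant in Lemma~\ref{estimate} precisely so that the loss $(-\ell)^{\tau-1}$ cancels after dividing by $(-\ell)^{\tau-1}$ while the telescoped coefficient still has the sharp order $(-\ell)^{-\tau}$, which is exactly what produces the logarithmic weight $\ln^{\tau}(2R/|x|)$; everything else is a streamlined reprise of the $\alpha=k$ case.
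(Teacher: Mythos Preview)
Your proof is correct and follows essentially the same argument as the paper: the same dyadic annuli $A_\ell$, the same use of Lemma~\ref{sobolev} (with $sp=d$ so the scale factor disappears), the same discrete Hardy step via Lemma~\ref{estimate} with $c=\bigl(\tfrac{-\ell}{-\ell-1/2}\bigr)^{\tau-1}$, the same telescoping and asymptotic $(-\ell)^{-(\tau-1)}-(-\ell+1/2)^{-(\tau-1)}\sim(-\ell)^{-\tau}$, and the same endgame bounding $|(u)_{A_{-1}}|^{\tau}$ by $\|u\|_{L^{p}}^{\tau}$ and the overlapping seminorm sum by the even/odd trick behind~\eqref{se1}. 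The only cosmetic difference is that you invoke the mean value theorem for the lower bound on the telescoped coefficient where the paper writes the asymptotic $\sim$, and you spell out the density/Fatou reduction explicitly.
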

\begin{proof}
   Let  $u \in W^{s,p}(B_{R}(0)) \cap C(B_{R}(0))$ be such that  $u=0$ in a neighbourhood of  $\{ 0 \}$. For each  $\ell \leq -1$, set
    \begin{equation*}
        A_{\ell} = \{ x \in \mathbb{R}^d : 2^{\ell} R \leq |x| < 2^{\ell+1}R \} \ .
    \end{equation*}
    Then, we have
    \begin{equation*}
        B_{R}(0) = \bigcup_{\ell=-\infty}^{-1} A_{\ell} \ .
    \end{equation*}
    Applying Lemma  \ref{sobolev} with  $\Omega = \{ x \in \mathbb{R}^d : ~ R < |x|< 2R \}, ~ \lambda= 2^{\ell}$ and  $sp=d$, we have
    \begin{equation*}
         \left( \fint_{A_{\ell}} |u(x)-(u)_{A_{\ell}}|^{\tau }  \,  dx \right)^{\frac{1}{\tau}} \leq C  [u]_{W^{s,p}(A_{\ell})}   ,
    \end{equation*}
  where  $C=C(d,p, \tau, R)$ is a positive constant. Let  $x \in A_{\ell}$, which imply  $ \frac{1}{|x|} < \frac{1}{2^{\ell}R}$. Therefore, we have
    \begin{align*}
         \int_{A_{\ell}} \frac{|u(x)|^{\tau}}{|x|^{d}} \, dx & \leq \frac{C}{2^{\ell d}} \int_{A_{\ell}} |u(x)-(u)_{A_{\ell}} + (u)_{A_{\ell}}|^{\tau} \, dx 
  \\ &  \leq \frac{C}{2^{\ell d}} \int_{A_{\ell}} |u(x)-(u)_{A_{\ell}}|^{\tau}  \, dx  + \frac{C}{2^{\ell d}} \int_{A_{\ell}} |(u)_{A_{\ell}}|^{\tau} \, dx  \\ &
    = C \frac{|A_{\ell}|}{2^{\ell d}} \fint_{A_{\ell}} |u(x)-(u)_{A_{\ell}}|^{\tau} \, dx + 
   C \frac{|A_{\ell}|}{2^{\ell d}}  |(u)_{A_{\ell}}|^{\tau}  \\ &
    \leq C [u]^{\tau}_{W^{s,p}(A_{\ell})} + C  |(u)_{A_{\ell}}|^{\tau}  ,
    \end{align*}
where $C= C(d,p, \tau, R)>0$. For each  $x \in A_{\ell}$, we have  $ \frac{2R}{|x|}  >  2^{- \ell}$. This implies that  $\ln \left( \frac{2R}{|x|} \right) > (- \ell) \ln2$. Therefore, we have
\begin{equation*}
    \bigintsss_{A_{\ell}} \frac{|u(x)|^{\tau}}{|x|^{d} \ln^{\tau}\left( \frac{2R}{|x|} \right)} \, dx \leq C \frac{[u]^{\tau}_{W^{s,p}(A_{\ell})} }{(- \ell)^{\tau}} + C \frac{|(u)_{A_{\ell}}|^{\tau}}{(-\ell)^{\tau}} 
     \leq  C [u]^{\tau}_{W^{s,p}(A_{\ell})} + C \frac{|(u)_{A_{\ell}}|^{\tau}}{(-\ell)^{\tau}}   .\end{equation*}
Here, we have used the fact that  $\frac{1}{(-\ell)^{\tau}} \leq 1$. Summing the above inequality from  $\ell=m \in \mathbb{Z}^{-}$ to  $-1$, we get
\begin{equation}\label{spd1}
    \bigintsss_{\{ 2^{m}R \leq |x| < R \} }  \frac{|u(x)|^{\tau}}{|x|^{d} \ln^{\tau}\left( \frac{2R}{|x|} \right)} \,  dx \leq C \sum_{\ell=m}^{-1} [u]^{\tau}_{W^{s,p}(A_{\ell})} + C   \sum_{\ell=m}^{-1} \frac{|(u)_{A_{\ell}}|^{\tau}}{(-\ell)^{\tau}}  .
\end{equation} 
Using Lemma  \ref{avg} with  $E= A_{\ell}$ and  $F=A_{\ell+1}$, and Lemma  \ref{sobolev} with  $\Omega = \{ x \in \mathbb{R}^d : R < |x| < 2^{2}R \}$ and  $\lambda=2^{\ell}$, we obtain, for some constant $C$ 
(independent of $\ell$), that
\begin{equation*}
   |(u)_{A_{\ell}} - (u)_{A_{\ell+1}} |^{\tau} \leq C \fint_{A_{\ell} \cup A_{\ell+1} }  |u(x) - (u)_{A_{\ell} \cup A_{\ell+1}}|^{\tau} \,  dx 
   \leq C [u]^{\tau}_{W^{s,p}(A_{\ell} \cup A_{\ell+1})} .
\end{equation*}
Independently, using triangle inequality, we have 
\begin{equation*}
    |(u)_{A_{\ell}}|^\tau \leq (|(u)_{A_{\ell+1}}| + |(u)_{A_{\ell}} - (u)_{A_{\ell+1}}|)^\tau.
\end{equation*}
For  $\ell  \leq -1$, consider the number  $c := (\frac{-\ell }{-\ell-1/2})^{\tau-1}>1$. Now applying Lemma  \ref{estimate} with  $c$, we obtain
\begin{equation*}
    |(u)_{A_{\ell}}|^{\tau} \leq \left( \frac{-\ell}{-\ell- 1/2} \right)^{\tau -1} |(u)_{A_{\ell+1}}|^{\tau} + C (-\ell)^{\tau-1} [u]^{\tau}_{W^{s,p}(A_{\ell} \cup A_{\ell+1})}.
\end{equation*}
Therefore, we have
\begin{equation*}
   \frac{|(u)_{A_{\ell}}|^{\tau}}{(-\ell)^{\tau-1}}  \leq  \frac{|(u)_{A_{\ell+1}}|^{\tau}}{(-\ell- 1/2)^{\tau-1}}  + C  [u]^{\tau}_{W^{s,p}(A_{\ell} \cup A_{\ell+1})}.
\end{equation*}
Summing the above inequality from  $\ell=m \in \mathbb{Z}^{-}$ to  $-2$, we get
\begin{equation*}
     \sum_{\ell=m}^{-2} \frac{|(u)_{A_{\ell}}|^{\tau}}{(-\ell)^{\tau-1}}  \leq \sum_{\ell=m}^{-2}  \frac{|(u)_{A_{\ell+1}}|^{\tau}}{(-\ell- 1/2)^{\tau-1}}  + C \sum_{\ell=m}^{-2}  [u]^{\tau}_{W^{s,p}(A_{\ell} \cup A_{\ell+1})}.
\end{equation*}
By changing sides, rearranging, and re-indexing, we get
\begin{align*}
   \frac{|(u)_{A_{m}}|}{(-m)^{\tau-1}} + \sum_{\ell=m+1}^{-2} \left\{  \frac{1}{(-\ell)^{\tau-1}} -  \frac{1}{(-\ell+ 1/2)^{\tau-1}}  \right\} |(u)_{A_{\ell}}|^{\tau} & \leq C|(u)_{A_{-1}}|^{\tau} \\ & \quad + C \sum_{\ell=m}^{-2}  [u]^{\tau}_{W^{s,p}(A_{\ell} \cup A_{\ell+1})}.
\end{align*}
Now, for large values of  $-\ell$, using the asymptotics 
\begin{equation*}
    \frac{1}{(-\ell)^{\tau-1}} -\frac{1}{(-\ell+1/2)^{\tau-1}} \sim \frac{1}{(-\ell)^{\tau}},
\end{equation*}
and choosing large $-m$ such that  $|(u)_{A_{m}}| = 0$, we arrive at
\begin{equation*}
    \sum_{\ell=m}^{-2} \frac{|(u)_{A_{\ell}}|^{\tau}}{(-\ell)^{\tau}} \leq C |(u)_{A_{-1}}|^{\tau} + C \sum_{\ell =m}^{-2} [u]^{\tau}_{W^{s,p}(A_{\ell} \cup A_{\ell+1})}.
\end{equation*}
Also, 
\begin{equation*}
     |(u)_{A_{-1}}|^{\tau} =   \left|  \frac{1}{|A_{-1}|} \int_{A_{-1}} u(x) \, dx  \Big|^{\tau} \leq C \right| \int_{A_{-1}} |u(x)|^{p} \, dx \Big|^{\frac{\tau}{p}} \\ \leq C \|u\|^{\tau}_{L^{p}(B_{R}(0))}.
\end{equation*} 
Therefore, we have
\begin{equation}\label{spd2}
     \sum_{\ell=m}^{-2} \frac{|(u)_{A_{\ell}}|^{\tau}}{(-\ell)^{\tau}} \leq  C \sum_{\ell =m}^{-2} [u]^{\tau}_{W^{s,p}(A_{\ell} \cup A_{\ell+1})} + C \|u\|^{\tau}_{L^{p}(B_{R}(0))}.
\end{equation}
Combining  \eqref{spd1} and  \eqref{spd2}, we obtain
\begin{equation*}
      \bigintsss_{\{ 2^{m}R \leq |x| < R \} }  \frac{|u(x)|^{\tau}}{|x|^{d} \ln^{\tau}\left( \frac{2R}{|x|} \right)} \, dx \leq C \sum_{\ell =m}^{-2} [u]^{\tau}_{W^{s,p}(A_{\ell} \cup A_{\ell+1})} +  C \|u\|^{\tau}_{L^{p}(B_{R}(0))} .
\end{equation*}
Applying  \eqref{sumineq} with  $\gamma = \frac{\tau}{p}$ on the first term of right hand side of the above inequality and then from  \eqref{se1} (see Appendix  \ref{appendix}) with  $\Omega_{n}= B_{R}(0)$, we get
\begin{align*}
   \left( \bigintsss_{B_{R}(0)} \frac{|u(x)|^{\tau}}{|x|^{d} \ln^{\tau} \left( \frac{2R}{|x|} \right) } \, dx \right)^{\frac{1}{\tau}} & \leq C \left( [u]^{\tau}_{W^{s,p}(B_{R}(0))} + \|u\|^{\tau}_{L^{p}(B_{R}(0))} \right)^{\frac{1}{\tau}}  \\ & \leq  C \left( [u]^{p}_{W^{s,p}(B_{R}(0))} + \|u\|^{p}_{L^{p}(B_{R}(0))} \right)^{\frac{1}{p}}  .  
\end{align*}
In the last inequality, we have used the fact that  $\tau \geq p$.
\end{proof}


\section{Proof of the main results}\label{proof of main result}
In this section, we prove Theorem  \ref{theorem 1 sp=k} without the optimality of weight function, Theorem  \ref{theorem 2 sp<k} and Theorem  \ref{theorem 3 sp>k}. Let $\Omega$ be a bounded Lipschitz domain in $\mathbb{R}^{d}, ~   d \geq 2$, and let  $K \subset \Omega$ be a compact set of codimension $k$ of class $C^{0,1}$, where $1<k<d,~   k \in \mathbb{N}$. For each $x \in K$ there exists a ball $B_{r_{x}}(x), ~ r_{x}>0$ such that Definition  \ref{definition} hold with an isomorphism $T_{x}$. Then  $ K \subset \cup_{x \in  K} B_{r_{x}}(x)$. Since  $K$ is compact, there exists  $x_{1}, \dots, x_{n} \in  K$ such that
\begin{equation*}
    K  \subset \bigcup_{i=1}^{n} B_{r_{i}}(x_{i}) ,
\end{equation*}
where  $r_{x_{i}}= r_{i}$. 


\subsection{Proof of Theorem  \ref{theorem 1 sp=k} without optimality of the weight function} Let  $u \in W^{s,p}(\Omega) \cap C(\Omega)$ be such that  $u=0$ in a neighbourhood of  $K$, and $sp=k$, where  $1<k<d$ with $k \in \mathbb{N}$.  Let  $\Omega \setminus K  \subset \cup_{i=0}^{n} \Omega_{i}$, where  $\Omega_{0} \subset \Omega \setminus K$ such that $\delta_{K}(x)> R_{0}$ for all $x \in \Omega_{0}$, for some $R_{0}>0$, and  $\Omega_{i} = B_{r_{i}}(x_{i})$ for all  $ 1 \leq i \leq n $. Let  $ \{ \eta_{i} \}_{i=0}^{n}$ be the associated partition of unity. Then,
\begin{equation*}
    u = \sum_{i=0}^{n} u_{i}, \hspace{.3cm} \text{where} \  u_{i} = \eta_{i} u.
\end{equation*}
From Lemma  \ref{testfunc}, we have
\begin{equation*}
    \|u_{i}\|_{W^{s,p}(\Omega)} \leq C \|u\|_{W^{s,p}(\Omega)}, \hspace{3mm} \forall \ 0 \leq i \leq n .
\end{equation*}
Therefore, it is sufficient to prove Theorem  \ref{theorem 1 sp=k} for all  $u_{i}, ~ 0 \leq i \leq n$. Since  $\operatorname{supp} u_{0} \subset \Omega_{0}$ and, for all  $x \in \Omega_{0}$, 
 \begin{equation*}
     C_{1} \leq \delta_{K}(x) \leq C_{2} \quad \text{for some} \  C_{1}, C_{2} >0 .
 \end{equation*}
 Therefore, from fractional Sobolev inequality  \eqref{Sobineqq}, we have
\begin{equation*} 
 \left( \bigintsss_{\Omega_{0}} \frac{|u_{0}(x)|^{\tau}}{\delta_{K}^{\alpha}(x) \ln^{\beta}\left( \frac{2R}{ \delta_{K}(x)}\right)} \,  dx \right)^{\frac{1}{\tau}} \leq C \Bigg( \int_{\Omega_{0}} |u_{0}(x)|^{\tau} \, dx \Bigg)^{\frac{1}{\tau}} \leq C \|u_{0}\|_{W^{s,p}(\Omega_{0})} .
\end{equation*}
For  $1 \leq i \leq n$, we have  $ \operatorname{supp} u_{i} \subset  (\Omega \backslash K) \cap \Omega_{i} $. Consider the isomorphism $T_{x_{i}}$, then 
 \begin{equation*}
      \delta_{K}(x) \sim |\xi_{k}| \quad \text{for all} \  x \in (\Omega \setminus K) \cap \Omega_{i},
 \end{equation*}
where  $T_{x_{i}}((\xi_{k}, \xi_{d-k})) = x$. Therefore, from Lemma  \ref{flat case sp=k 2}, we have
 \begin{align*}
    \left( \bigintsss_{(\Omega \setminus K)  \cap \Omega_{i}} \frac{|u_{i}(x)|^{\tau}}{\delta_{K}^{\alpha}(x) \ln^{\beta}\left( \frac{2R}{\delta_{K}(x)} \right)} \, dx \right)^{\frac{1}{\tau}} & \sim  \left( \bigintsss_{T^{-1}_{x_{i}}((\Omega \setminus K) \cap \Omega_{i})} \frac{|u_{i} \circ T_{x_{i}}(\xi)|^{\tau}}{|\xi_{k}|^{\alpha} \ln^{\beta}\left( \frac{2R}{|\xi_{k}|} \right)} \, d \xi \right)^{\frac{1}{\tau}} \\ & \leq C \|u_{i} \circ T_{x_{i}}\|_{W^{s,p}(T^{-1}_{x_{i}}((\Omega \setminus K) \cap \Omega_{i}))} \\ & = C \|u_{i}\|_{W^{s,p}((\Omega \setminus K) \cap \Omega_{i})}.
 \end{align*}
This proves Theorem  \ref{theorem 1 sp=k} without optimality of weight function when  $1<k<d$ with $k \in \mathbb{N}$.

\smallskip

Now assume $k=d$ and $0 \in \Omega$. There exists $R_{1}>0$ such that $B_{R_{1}}(0) \subset \Omega$. From Lemma  \ref{flat case sp=k=d}, we have
\begin{equation*}
    \left( \bigintsss_{B_{R_{1}}(0)} \frac{|u(x)|^{\tau}}{|x|^{d} \ln^{\tau} \left( \frac{2R_{1}}{|x|} \right) } \, dx \right)^{\frac{1}{\tau}} \leq  C \|u\|_{W^{s,p}(B_{R_{1}}(0))}  .
\end{equation*}
Also, for any $x \in \Omega \setminus B_{R_{1}}(0)$, we have $|x| \geq R_{1}$. Therefore, using this and fractional Sobolev inequality  \eqref{Sobineqq}, we have
\begin{equation*}
    \left( \bigintsss_{\Omega \setminus B_{R_{1}}(0)} \frac{|u(x)|^{\tau}}{|x|^{d} \ln^{\tau} \left( \frac{2R}{|x|} \right) } \,  dx \right)^{\frac{1}{\tau}} \leq C \left( \int_{\Omega \setminus B_{R_{1}}(0)} |u(x)|^{\tau} \, dx \right)^{\frac{1}{\tau}} \leq C\|u\|_{W^{s,p}(\Omega)}.
\end{equation*}
This proves the Theorem  \ref{theorem 1 sp=k} for the case $k=d$ and $K = \{ 0 \}$.

\subsection{Proof of Theorem  \ref{theorem 2 sp<k}}
 Let  $u \in W^{s,p}(\Omega) \cap C(\Omega)$ be such that $u=0$ in a neighbourhood of $K$ and $sp<k$, where  $ 1<k<d$ with $k \in \mathbb{N}$. Let  $\Omega \setminus K \subset \cup_{i=0}^{n} \Omega_{i}$, where  $\Omega_{0} \subset \Omega \setminus K$ such that $\delta_{K}(x)> R_{0}$ for all $x \in \Omega_{0}$ for some $R_{0}>0$, and  $\Omega_{i} = B_{r_{i}}(x_{i})$ for all  $ 1 \leq i \leq n $. Let  $ \{ \eta_{i} \}_{i=0}^{n}$ be the associated partition of unity. Then,
\begin{equation*}
    u = \sum_{i=0}^{n} u_{i} \quad \text{where} \  u_{i} = \eta_{i} u.
\end{equation*}
From Lemma  \ref{testfunc}, we have
\begin{equation*}
    \|u_{i}\|_{W^{s,p}(\Omega)} \leq C \|u\|_{W^{s,p}(\Omega)}, \hspace{3mm} \forall \ 0 \leq i \leq n .
\end{equation*}
Therefore, it is sufficient to prove Theorem  \ref{theorem 2 sp<k} for all  $u_{i}, ~ 0 \leq i \leq n$. Since  $\operatorname{supp} u_{0} \subset \Omega_{0}$ and, for all  $x \in \Omega_{0}$, 
 \begin{equation*}
     C_{1} \leq \delta_{K}(x) \leq C_{2} \quad \text{for some} \  C_{1}, C_{2} >0 .
 \end{equation*}
 Therefore, from fractional Sobolev inequality  \eqref{Sobineqq}, we have
\begin{equation*} 
 \left( \int_{\Omega_{0}} \frac{|u_{0}(x)|^{\tau}}{\delta_{K}^{\alpha}(x) } \, dx \right)^{\frac{1}{\tau}} \leq C \Bigg( \int_{\Omega_{0}} |u_{0}(x)|^{\tau} \, dx \Bigg)^{\frac{1}{\tau}} \leq C \|u_{0}\|_{W^{s,p}(\Omega_{0})} .
\end{equation*}
For  $1 \leq i \leq n$, we have $\operatorname{supp} u_{i} \subset (\Omega \backslash K ) \cap \Omega_{i} $. Consider the isomorphism $T_{x_{i}}$, then 
 \begin{equation*}
      \delta_{K}(x) \sim |\xi_{k}| \quad \text{for all} \  x \in (\Omega \setminus K) \cap \Omega_{i},
 \end{equation*}
where  $T_{x_{i}}((\xi_{k}, \xi_{d-k})) = x$. Therefore, from Lemma  \ref{flat case sp<k}, we have
 \begin{align*}
    \left( \int_{(\Omega \setminus K) \cap \Omega_{i}} \frac{|u_{i}(x)|^{\tau}}{\delta_{K}^{\alpha}(x) } \,  dx \right)^{\frac{1}{\tau}} & \sim  \left( \int_{T^{-1}_{x_{i}}((\Omega \setminus K ) \cap \Omega_{i})} \frac{|u_{i} \circ T_{x_{i}}(\xi)|^{\tau}}{|\xi_{k}|^{\alpha}} \, d \xi \right)^{\frac{1}{\tau}} \\ & \leq C \|u_{i} \circ T_{x_{i}}\|_{W^{s,p}(T^{-1}_{x_{i}}((\Omega \setminus K) \cap \Omega_{i}))} = C \|u_{i}\|_{W^{s,p}((\Omega \setminus K) \cap \Omega_{i})}.
 \end{align*}

\subsection{Proof of Theorem  \ref{theorem 3 sp>k}}\label{proof of theorem 3}
Let  $u \in W^{s,p}(\Omega) \cap C(\Omega)$ be such that $u=0$ in a neighbourhood of $K$ and $sp>k$, where $1<k<d$ with $k \in \mathbb{N}$. To prove Theorem  \ref{theorem 3 sp>k}, we will utilize  Lemma  \ref{flat case sp>k a}. For this, first we consider the case $\tau=p$ when $sp<d$, and $\tau \geq p$ when $sp=d$. Following a similar approach illustrated in the proof of Theorem  \ref{theorem 2 sp<k} and using Lemma  \ref{flat case sp>k a}, one can obtain the following inequality:
\begin{equation}\label{proof of theorem sp>k full norm}
\left(    \int_{\Omega  } \frac{|u(x)|^{\tau}}{\delta^{sp}_{K}(x) } \, dx \right)^{\frac{1}{\tau}} \leq C \|u\|_{W^{s,p}(\Omega)},
\end{equation}
for  $ \tau=p $ when  $sp<d$, and  $\tau \geq p$ when  $sp=d$. To prove Theorem  \ref{theorem 3 sp>k}, it is sufficient to prove for some constant  $C>0$,
\begin{equation}\label{sp>k fractional sobolev}
    \int_{\Omega} |u(x)|^{p} dx \leq C [u]^{p}_{W^{s,p}(\Omega)}, \quad \ \forall \ u \in W^{s,p}_{0}(\Omega \setminus K),
\end{equation}
when  $sp>k$. Assume this is not true. Suppose we have a sequence of functions  $\{ u_{n} \}_{n}$ in $W^{s,p}_{0} (\Omega \setminus K)$ defined on the domain  $\Omega$, such that  $\int_{\Omega} |u_{n}(x)|^{p} dx =1 $ for all  $n \in \mathbb{N}$ and  $[u_{n}]_{W^{s,p}(\Omega)} \to 0$ as  $n \to \infty$. Let  $\mathcal{F}= \{ u_{n}: ~ n \in \mathbb{N} \}$, then  $\mathcal{F}$ satisfies the conditions of Lemma  \ref{compactness} and hence  $\mathcal{F}$ is pre-compact in  $L^{p}(\Omega)$. Therefore, there exists a subsequence (which we will denote as  $\{ u_{n} \}_{n}$ again) such that   $u_{n} \to u$ pointwise almost everywhere as  $n \to \infty$ and  $\int_{\Omega} |u(x)|^{p} dx = 1$. Also, by Fatou's lemma we have
\begin{equation*}
    [u]_{W^{s,p}(\Omega)} \leq \liminf_{n \to \infty} [u_{n}]_{W^{s,p}(\Omega)} = 0 .
\end{equation*}
Therefore,  $u$ must be a constant function. As  $\int_{\Omega} |u(x)|^{p} dx =1 $, we have  $u(x) = |\Omega|^{\frac{-1}{p}}$ almost everywhere. From  \eqref{proof of theorem sp>k full norm} and  $sp>k$, we have
\begin{equation*}
    \infty = |\Omega|^{\frac{-1}{p}} \left( \int_{\Omega} \frac{1}{\delta^{sp}_{K}(x)} dx \right)^{\frac{1}{\tau}} \leq C\|u\|_{W^{s,p}(\Omega)} = C, 
\end{equation*}
which is a contradiction. Therefore, from  \eqref{proof of theorem sp>k full norm} and  \eqref{sp>k fractional sobolev}, we have
\begin{equation}\label{ineq frac Hardy sp>k a}
    \left(    \int_{\Omega} \frac{|u(x)|^{\tau}}{\delta^{sp}_{K}(x) } \, dx \right)^{\frac{1}{\tau}} \leq C [u]_{W^{s,p}(\Omega)},
\end{equation}
for  $ \tau=p $ when  $sp<d$, and  $\tau \geq p$ when  $sp=d$. Now assume $sp<d$ and $\tau \in (p,p^{*}_{s})$, there exists $\theta \in (0,1)$ such that $\tau = \theta p + (1-\theta) p^{*}_{s}$. Using H$\ddot{\text{o}}$lder's inequality and $a_{1}>0$, we obtain
\begin{equation*}
    \int_{\Omega} \frac{|u(x)|^{\tau}}{\delta^{a_{1}}_{K}(x)} \, dx =  \int_{\Omega} \frac{|u(x)|^{\theta p + (1- \theta)p^{*}_{s}}}{\delta^{a_{1}}_{K}(x)} \, dx \leq \left( \int_{\Omega} \frac{|u(x)|^{p}}{\delta^{\frac{a_{1}}{\theta}}_{K}(x)} \, dx \right)^{\theta} \left( \int_{\Omega} |u(x)|^{p^{*}_{s}} \, dx \right)^{1- \theta}.
\end{equation*}
Choose $a_{1}>0$ such that $\frac{a_{1}}{\theta} = sp$. Then using  \eqref{ineq frac Hardy sp>k a} with $\tau=p,  ~ sp<d$ and fractional Sobolev inequality  \eqref{Sobineqq} with  \eqref{sp>k fractional sobolev}, we have
\begin{equation*}
    \int_{\Omega} \frac{|u(x)|^{\tau}}{\delta^{a_{1}}_{K}(x)} \, dx \leq C [u]^{\theta p}_{W^{s,p}(\Omega)} [u]^{(1-\theta)p^{*}_{s}}_{W^{s,p}(\Omega)}= C[u]^{\tau}_{W^{s,p}(\Omega)}. 
\end{equation*}
From the definition of $\theta$, we obtain $a_{1} = d+ (sp-d) \frac{\tau}{p} = \alpha$. Therefore, combining all the above inequalities, we get
\begin{equation*}
    \left( \int_{\Omega} \frac{|u(x)|^{\tau}}{\delta^{\alpha}_{K}(x)} dx \right)^{\frac{1}{\tau}}  \leq C [u]_{W^{s,p}(\Omega)},
\end{equation*}
for $\tau \in [p, p^{*}_{s}]$ when $sp<d$, and $\tau \geq p$ when $sp=d$. Consider the case $\tau=p$ and $sp>k$. Using fractional Poincar\'e\ inequality  \eqref{poincare}, one can obtain that Lemma  \ref{sumineqlemma} and Lemma  \ref{est2} holds true for any $s \in (0,1)$ and $\tau=p$. Therefore, Lemma  \ref{flat case sp>k a} holds true for any $s \in (0,1)$ and $\tau=p$ satisfying $sp>k$. Therefore, following a similar approach as illustrated above for the case $sp \leq d$, one can obtain the following inequality for $sp>k$,
\begin{equation*}
    \left(    \int_{\Omega} \frac{|u(x)|^{p}}{\delta^{sp}_{K}(x) } \, dx \right)^{\frac{1}{p}} \leq C [u]_{W^{s,p}(\Omega)}.
\end{equation*}
This proves Theorem \ref{theorem 3 sp>k}.

\subsection{Proof of Corollary  \ref{cor 1}}\label{proof of cor 1} Let $K_{1}$ be any set such that $K_{1} \subset K$, where $K$ is a compact set of codimension $k$ of class $C^{0,1}$, where $ 1< k<d, ~  k \in \mathbb{N}$. Let $g(t) = t^{\alpha} \ln^{\beta} \left( \frac{2R}{t} \right)$ be a function defined on $t \geq 0$, where $\alpha, ~ \beta >0$. Then the function $g$ is increasing for any $t \in (0, 2R e^{- \frac{\beta}{\alpha}})$, i.e., for any $t_{1}, t_{2} \in \left( 0, 2R e^{- \frac{\beta}{\alpha}} \right)$ with $t_{1} \geq t_{2}$, we have $g(t_{1}) \geq g(t_{2})$. Using the function $g$ defined as above with $\delta_{K}(x), ~ \delta_{K_{1}}(x) \in  \left( 0, 2R e^{- \frac{\beta}{\alpha}} \right) $, we have
\begin{equation*}
    \delta^{\alpha}_{K_{1}}(x) \ln^{\beta} \left( \frac{2R}{\delta_{K_{1}}(x)} \right) \geq \delta^{\alpha}_{K}(x) \ln^{\beta} \left( \frac{2R}{\delta_{K}(x)} \right) .
\end{equation*}
Let $\Omega'= \{ x \in \Omega : \max\{ \delta_{K}(x) , \delta_{K_{1}}(x)  \} < 2Re^{- \frac{\beta}{\alpha}} \} $. Then, using the above inequality and $C_{1} \leq \delta_{K}(x) \leq C_{2}$ for all $x \in \Omega \setminus \Omega'$ for some $C_{1}, C_{2} >0$, we get
\begin{equation*}
\begin{split}
    \bigintsss_{\Omega} \frac{|u(x)|^{\tau}}{\delta^{\alpha}_{K_{1}}(x) \ln^{\beta} \left( \frac{2R}{\delta_{K_{1}}(x)}  \right)} \,  dx & =   \bigintsss_{\Omega'} \frac{|u(x)|^{\tau}}{\delta^{\alpha}_{K_{1}}(x) \ln^{\beta} \left( \frac{2R}{\delta_{K_{1}}(x)}  \right)} \, dx +  \bigintsss_{\Omega \setminus \Omega'} \frac{|u(x)|^{\tau}}{\delta^{\alpha}_{K_{1}}(x) \ln^{\beta} \left( \frac{2R}{\delta_{K_{1}}(x)}  \right)} \, dx \\ & \leq \bigintsss_{\Omega'} \frac{|u(x)|^{\tau}}{\delta^{\alpha}_{K}(x) \ln^{\beta} \left( \frac{2R}{\delta_{K}(x)}  \right)} \, dx + C \int_{\Omega \setminus \Omega'} |u(x)|^{\tau} \, dx \\ & \leq \bigintsss_{\Omega} \frac{|u(x)|^{\tau}}{\delta^{\alpha}_{K}(x) \ln^{\beta} \left( \frac{2R}{\delta_{K}(x)}  \right)} \, dx + C \int_{\Omega} |u(x)|^{\tau} \, dx .
\end{split}
\end{equation*}
Therefore, from Theorem  \ref{theorem 1 sp=k} with $K$ of codimension $k$ and fractional Sobolev inequality (see  \eqref{Sobineqq}), we obtain
\begin{equation*}
   \left(  \bigintsss_{\Omega} \frac{|u(x)|^{\tau}}{\delta^{\alpha}_{K_{1}}(x) \ln^{\beta} \left( \frac{2R}{\delta_{K_{1}}(x)}  \right)} \, dx \right)^{\frac{1}{\tau}} \leq C \|u\|_{W^{s,p}(\Omega)}.
\end{equation*}


\section{Optimality of Weight function in Theorem  \ref{theorem 1 sp=k}}\label{optimality section}
In this section, we prove the optimality of weight function presented in Theorem  \ref{theorem 1 sp=k}. To prove the optimality of weight function for Theorem  \ref{theorem 1 sp=k}, it is sufficient to establish for flat boundary case. For a general bounded Lipschitz domain, the result follows from a patching argument. We assume the domain  $\Omega = B^{k}_{1}(0) \times (0,1)^{d-k}$, where  $B^{k}_{1}(0) \subset \mathbb{R}^{k}$ is a ball of radius  $1$ centered at  $0$ and a compact set $K= \{ x= (x_{k}, x_{d-k}) \in \mathbb{R}^{d} : x_{k} = 0  \ \text{and} \ x_{d-k} \in [0,1]^{d-k} \}$. Then  $\delta_{K}(x)= |x_{k}|$ for all  $x \in \Omega \setminus K$, and let  $u_{k}: B^{k}_{1}(0) (\subset \mathbb{R}^{k}) \to \mathbb{R}$ be a function such that
\begin{equation}\label{fn u_k}
    u_{k}(x_{k}) = \begin{cases}
    \frac{\ln\left( \frac{2}{\epsilon} \right)}{\ln \left( \frac{2}{|x_{k}|} \right)}, & 0< |x_{k}| < \epsilon \\
        1, & \epsilon \leq |x_{k}| < 1.
    \end{cases}
\end{equation}

\smallskip

The next lemma establishes an inequality for the function  $u_{k}$ which plays a crucial role in establishing optimality of weight function given in Theorem  \ref{theorem 1 sp=k}.

\begin{lemma}\label{lemma on u_k}
    Let $1<k<d$ with $k \in \mathbb{N}$ and $sp=k$, and let  $u_{k}$ be the function defined in  \eqref{fn u_k}. Then there exist constants  $C_{1}, ~ C_{2}>0$ does not depend on $\epsilon$ such that
    \begin{equation*}
        \bigintsss_{B^{k}_{1}(0)} \frac{|u_{k}(x_{k}) - (u_{k})_{B^{k}_{1}(0)}|^{p}}{|x_{k}|^{k} \ln^{p} \left( \frac{2}{|x_{k}|}  \right)} \, dx_{k} = \frac{C_{1}}{\ln^{p -1} \left( \frac{2}{\epsilon} \right)} + O(\epsilon)   , 
    \end{equation*}
    and 
    \begin{equation*}
         [u_{k}]^{p}_{W^{s,p}(B^{k}_{1}(0))} \leq \frac{C_{2}}{\ln^{p-1} \left( \frac{2}{\epsilon} \right)}.
    \end{equation*}
\end{lemma}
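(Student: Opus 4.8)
The plan is to compute both quantities by direct estimation, exploiting the radial structure of $u_k$ and the fact that $u_k$ is constant ($=1$) on the annulus $\epsilon \le |x_k| < 1$. I write $r = |x_k|$ throughout and pass to polar coordinates in $\mathbb{R}^k$, so $dx_k = |\mathbb{S}^{k-1}| r^{k-1}\,dr$. First I would estimate the average: since $u_k \equiv 1$ on $\{\epsilon \le r < 1\}$, which carries most of the mass of $B^k_1(0)$, and $0 \le u_k \le 1$ everywhere with $u_k \to 0$ only on the small ball $\{r < \epsilon\}$, one gets $(u_k)_{B^k_1(0)} = 1 - O(\epsilon^k |\ln \epsilon|^{-1})$, in particular $1 - (u_k)_{B^k_1(0)} = O(\epsilon)$ crudely (or sharper if needed). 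This lets me replace $(u_k)_{B^k_1(0)}$ by $1$ at the cost of an $O(\epsilon)$-type error in the weighted integral, since the weight $|x_k|^{-k}\ln^{-p}(2/|x_k|)$ is locally integrable away from the origin on the annulus.

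For the first identity, I would split $\int_{B^k_1(0)}$ as $\int_{\{r<\epsilon\}} + \int_{\{\epsilon \le r<1\}}$. On the outer annulus $u_k - 1 = 0$, so after the average-replacement this piece contributes only the $O(\epsilon)$ error term. On the inner ball, $u_k(x_k) - 1 = \frac{\ln(2/\epsilon)}{\ln(2/r)} - 1 = \frac{\ln(2/\epsilon) - \ln(2/r)}{\ln(2/r)} = \frac{\ln(r/\epsilon)}{\ln(2/r)}$, which is negative for $r<\epsilon$. Hence, using $sp=k$,
\[
\int_{\{r<\epsilon\}} \frac{|u_k - 1|^p}{r^k \ln^p(2/r)}\,dx_k = |\mathbb{S}^{k-1}| \int_0^\epsilon \frac{1}{r}\,\frac{|\ln(r/\epsilon)|^p}{\ln^{2p}(2/r)}\,dr.
\]
Now substitute $t = \ln(2/r)$, so $dt = -dr/r$ and $r<\epsilon$ corresponds to $t > \ln(2/\epsilon) =: L$; moreover $\ln(r/\epsilon) = \ln(2/\epsilon) - \ln(2/r) = L - t$, so $|\ln(r/\epsilon)| = t - L$. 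The integral becomes $|\mathbb{S}^{k-1}| \int_L^\infty \frac{(t-L)^p}{t^{2p}}\,dt$. A further substitution $t = Ls$ gives $L^{1-p}\int_1^\infty \frac{(s-1)^p}{s^{2p}}\,ds$, and the remaining integral converges (the integrand is $O(s^{-p})$ at infinity with $p>1$, and behaves like $(s-1)^p$ near $s=1$), yielding a finite constant $C = C(p,k) > 0$ independent of $\epsilon$. Thus the inner-ball contribution is exactly $C \ln^{1-p}(2/\epsilon)$, and combining with the $O(\epsilon)$ error proves the first displayed formula.

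For the seminorm bound I would estimate $[u_k]^p_{W^{s,p}(B^k_1(0))} = \int\int \frac{|u_k(x)-u_k(y)|^p}{|x-y|^{k+sp}}\,dx\,dy$ with $sp=k$, i.e. denominator $|x-y|^{2k}$; actually care is needed since this is the $\mathbb{R}^k$-Gagliardo seminorm with exponent $sp=k$ — wait, the seminorm here is over $B^k_1(0)\subset\mathbb{R}^k$ so $d$ is replaced by $k$ and the denominator is $|x-y|^{k+sp} = |x-y|^{2k}$. Since $u_k$ is radial, Lipschitz on each region, and supported-variation only on $\{r<\epsilon\}$, the dominant contribution comes from pairs with at least one point in the inner ball. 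I would split the double integral by dyadic shells $\{2^{-j-1}\epsilon \le r < 2^{-j}\epsilon\}$, $j\ge 0$, together with the transition shell near $r\sim\epsilon$, and use the gradient bound $|\nabla u_k(x_k)| \sim \frac{\ln(2/\epsilon)}{r\ln^2(2/r)}$ on $\{r<\epsilon\}$; a standard local estimate $\int_{B_\rho}\int_{B_\rho} \frac{|v(x)-v(y)|^p}{|x-y|^{k+sp}}\,dx\,dy \lesssim \rho^{k - sp + p}\|\nabla v\|_{L^\infty(B_\rho)}^p \cdot(\text{with the }sp<k\text{-type bound, or log-corrected at }sp=k)$ applied shell-by-shell, plus the tail interaction with the region where $u_k\equiv 1$, should telescope to give the bound $C\ln^{1-p}(2/\epsilon)$. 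Alternatively — and this is likely cleaner — I would note that the first part of the lemma together with the (already available) Lemma~\ref{flat case sp=k 1} applied in the flat $k$-dimensional case gives $\frac{C}{\ln^{p-1}(2/\epsilon)} + O(\epsilon) = \int \frac{|u_k - (u_k)|^p}{|x_k|^k \ln^p(2/|x_k|)} \le C[u_k]^p_{W^{s,p}}$, which is the \emph{wrong} direction; so the seminorm upper bound genuinely must be proved by the direct dyadic computation. The main obstacle is precisely this seminorm estimate: getting the sharp exponent $1-p$ (rather than a weaker power of $\ln(2/\epsilon)$) requires carefully tracking the cancellation $|u_k(x)-u_k(y)|$ across dyadic scales near the origin and summing the resulting geometric-type series, and handling the borderline integrability at $sp=k$ where the naive scaling estimate produces the extra logarithmic factor that ultimately accounts for the $\ln^{1-p}$ rate.
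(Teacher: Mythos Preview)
Your treatment of the first identity is essentially the paper's argument: compute $(u_k)_{B^k_1(0)} = 1 + O(\epsilon)$, reduce to the inner ball $\{r<\epsilon\}$, pass to polar coordinates, and your substitution $t = \ln(2/r)$ followed by $t = Ls$ (with $L=\ln(2/\epsilon)$) is exactly the paper's single substitution $\xi = \ln(2/r)/\ln(2/\epsilon)$, yielding the convergent integral $\int_1^\infty (\xi-1)^p \xi^{-2p}\,d\xi$.

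For the seminorm bound your proposal diverges from the paper and, as you yourself flag, is only a sketch. The paper uses no dyadic decomposition and no gradient bounds. It splits $[u_k]^p = J_1 + 2J_2$ (both points in $B^k_\epsilon$, versus one in each region), passes to bipolar coordinates $(r,\omega_r,t,\omega)$, uses rotational invariance to fix one direction at $e_1$, and writes the kernel as $(rt)^{-1}(r/t + t/r - 2\omega_1)^{-k}$. The substitutions $\xi = \ln(2/r)$, $\eta = \ln(2/t)$, $z = \eta - \xi$ turn this into $(e^z + e^{-z} - 2\omega_1)^{-k}$; for $z>2$ this decays exponentially, while for $0<z<2$ a Taylor expansion and the bound $1-\omega_1 \ge |\omega'|^2/2$ reduce it to $(|\omega'|^2 + z^2)^{-k}$, whose integral against $z^p$ is finite precisely because $sp=k$ forces $p>k$. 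The remaining $\xi$-integral $\int_L^\infty \xi^{-2p}\,d\xi$ produces the factor $L^{1-p}$. The cross term $J_2$ is handled by a separate substitution reducing it to $L^{-p}\int_0^\infty q^p(e^q-1)^{-k}\,dq$, so $J_2 = O(L^{-p})$ is lower order.

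Your dyadic plan is not wrong in spirit: the diagonal-block contribution $\rho^p\|\nabla u_k\|_\infty^p \sim L^p/\ln^{2p}(2/\rho)$ summed over $\rho = 2^{-j}\epsilon$ does give $CL^{1-p}$. But you do not estimate the off-diagonal blocks (pairs in distinct shells) or the cross term with $\{r>\epsilon\}$, and at the borderline $sp=k$ these cannot be discarded by naive scaling. So Part~2 is a genuine gap; the paper's explicit logarithmic change of variables is what closes it.
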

\begin{proof}
Since
\begin{align*}
    (u_{k})_{B^{k}_{1}(0)} & =  \frac{1}{|B^{k}_{1}(0)|}  \int_{B^{k}_{1}(0)} u_{k}(x_{k}) \, dx_{k} \\ & =  \frac{1}{|B^{k}_{1}(0)|} \left( \ln\left( \frac{2}{\epsilon} \right) \int_{0 < |x_{k}| < \epsilon} \frac{1}{\ln \left( \frac{2}{|x_{k}|} \right)} \, dx_{k} + \int_{\epsilon \leq |x_{k}| <1} \, dx_{k} \right) \\ & =  k \ln\left( \frac{2}{\epsilon} \right) \int_{0}^{\epsilon} \frac{r^{k-1}}{\ln \left( \frac{2}{r} \right) } \, dr + (1-\epsilon)^{k} = 1 + O(\epsilon),
\end{align*}
it follows that
\begin{align*}
    \bigintsss_{B^{k}_{1}(0)}  \frac{ | u_{k}(x_{k}) -  (u_{k})_{B^{k}_{1}(0)} |^{p}}{|x_{k}|^{k} \ln^{p} \left( \frac{2}{|x_{k}|}  \right)} \,  dx_{k} &  = C \bigintsss_{0 < |x_{k}|< \epsilon} \frac{|u_{k}(x_{k}) - 1|^{p}}{|x_{k}|^{k} \ln^{p} \left( \frac{2}{|x_{k}|}  \right)} \, dx_{k} + O(\epsilon)\\ &  = C \bigintsss_{0 < |x_{k}|< \epsilon} \frac{\left|\ln\left( \frac{2}{\epsilon} \right)-\ln \left( \frac{2}{|x_{k}|} \right) \right|^{p}}{|x_{k}|^{k} \ln^{2p} \left( \frac{2}{|x_{k}|} \right) } \, dx_{k} + O(\epsilon) \\ &  = C \bigintsss_{0}^{\epsilon} \frac{|\ln\left( \frac{2}{\epsilon} \right)-\ln \left( \frac{2}{r} \right)|^{p}}{r \ln^{2p} \left( \frac{2}{r} \right) } \, dr + O(\epsilon).
\end{align*}
Applying the change of variable  $ \frac{\ln \left( \frac{2}{r} \right)}{ \ln \left( \frac{2}{\epsilon} \right)}  = \xi$ and using the fact that $\int_{1}^{\infty} \frac{|\xi-1|^{p}}{\xi^{2p}} d \xi < \infty$, we obtain
\begin{align*}
\bigintsss_{B^{k}_{1}(0)} \frac{|u_{k}(x_{k}) - (u_{k})_{B^{k}_{1}(0)}|^{p}}{|x_{k}|^{k} \ln^{p} \left( \frac{2}{|x_{k}|}  \right)} \, dx_{k} &   = \frac{C}{\ln^{p -1} \left( \frac{2}{\epsilon} \right)} \int_{1}^{\infty} \frac{|\xi - 1|^{p}}{\xi^{2 p}} \, d \xi +   O(\epsilon) \\ & = \frac{C}{\ln^{p -1} \left( \frac{2}{\epsilon} \right)} + O(\epsilon) .
\end{align*}
This proves the first part of lemma. Now, let us calculate the Gagliardo seminorm of  $u_{k}$ with  $sp=k$,
\begin{align}\label{opt eq gsk}
     [u_{k}]^{p}_{W^{s,p}(B^{k}_{1}(0))} & =  [u_{k}]^{p}_{W^{s,p}(B^{k}_{\epsilon}(0))} + 2 \int_{\epsilon<|y_{k}|<1} \int_{0 < |x_{k}|< \epsilon} \frac{|u_{k}(x_{k}) - u_{k}(y_{k})|^{p}}{|x_{k} - y_{k}|^{2k}} \, dx_{k} \, dy_{k} \nonumber \\ & =: J_{1} + J_{2}.
 \end{align}
 For  $J_{1}$, applying the change of variable in polar coordinate, we have
 \begin{align*}
     J_{1} & =   \int_{ 0 < |x_{k}|< \epsilon}  \int_{0< |y_{k}|< \epsilon} \frac{|u_{k}(x_{k}) - u_{k}(y_{k})|^{p}}{|x_{k} - y_{k}|^{2k}} \, dx_{k} \, dy_{k}  \\ & =  \ln^{p} \left( \frac{2}{\epsilon} \right) \bigintsss_{ 0 < |x_{k}|< \epsilon}  \bigintsss_{0< |y_{k}|< \epsilon} \frac{ \left| \ln \left( \frac{2}{|x_{k}|} \right) - \ln \left( \frac{2}{|y_{k}|} \right) \right|^{p}}{ \ln^{p} \left( \frac{2}{|x_{k}|} \right) \ln^{p} \left( \frac{2}{|y_{k}|} \right) |x_{k} - y_{k}|^{2k} } \, dx_{k} \, dy_{k} \\ & =  \ln^{p} \left( \frac{2}{\epsilon} \right) \bigintsss_{\mathbb{S}^{k-1}} \bigintsss_{\mathbb{S}^{k-1}} \bigintsss_{0}^{\epsilon}  \bigintsss_{0}^{\epsilon} \frac{|\ln \left( \frac{2}{r} \right) - \ln \left( \frac{2}{t} \right)|^{p}}{ \ln^{p} \left( \frac{2}{r} \right) \ln^{p} \left( \frac{2}{t} \right) |r \omega_{r}  - t \omega|^{2k} } r^{k-1} t^{k-1} \, dr \, dt \, d \omega_{r} \, d \omega .
 \end{align*}
 Using rotational invariance, we have
 \begin{multline*}
   \bigintsss_{\mathbb{S}^{k-1}} \bigintsss_{\mathbb{S}^{k-1}}  \bigintsss_{0}^{\epsilon}  \bigintsss_{0}^{\epsilon} \frac{|\ln \left( \frac{2}{r} \right) - \ln \left( \frac{2}{t} \right)|^{p}}{ \ln^{p} \left( \frac{2}{r} \right) \ln^{p} \left( \frac{2}{t} \right) |r \omega_{r}  - t \omega|^{2k} } r^{k-1} t^{k-1} \, dr \, dt \, d \omega_{r} \, d \omega \\ =  |\mathbb{S}^{k-1}| \bigintsss_{\mathbb{S}^{k-1}} \bigintsss_{0}^{\epsilon}  \bigintsss_{0}^{\epsilon} \frac{|\ln \left( \frac{2}{r} \right) - \ln \left( \frac{2}{t} \right)|^{p}}{ \ln^{p} \left( \frac{2}{r} \right) \ln^{p} \left( \frac{2}{t} \right) |r e_{1}  - t \omega|^{2k} } r^{k-1} t^{k-1} \, dr \, dt \, d \omega ,
 \end{multline*}
 where  $e_{1}= (1, 0, \dots, 0)$. Combining the above two inequalities, we obtain
 \begin{align*}
     J_{1} & =  |\mathbb{S}^{k-1}| \ln^{p} \left( \frac{2}{\epsilon} \right) \bigintsss_{\mathbb{S}^{k-1}} \bigintsss_{0}^{\epsilon}  \bigintsss_{0}^{\epsilon} \frac{|\ln \left( \frac{2}{r} \right) - \ln \left( \frac{2}{t} \right)|^{p}}{ \ln^{p} \left( \frac{2}{r} \right) \ln^{p} \left( \frac{2}{t} \right) |r e_{1}  - t \omega|^{2k} } r^{k-1} t^{k-1} \, dr \, dt \, d \omega \\ & =  |\mathbb{S}^{k-1}| \ln^{p} \left( \frac{2}{\epsilon} \right) \bigintsss_{\mathbb{S}^{k-1}}  \bigintsss_{0}^{\epsilon}  \bigintsss_{0}^{\epsilon} \left( \frac{|\ln \left( \frac{2}{r} \right) - \ln \left( \frac{2}{t} \right)|^{p}}{ \ln^{p} \left( \frac{2}{r} \right) \ln^{p} \left( \frac{2}{t} \right) \left( \frac{r}{t} + \frac{t}{r} -  2 \omega_{1} \right)^{k} } \right) \frac{1}{rt} \,  dr \, dt \, d \omega ,
 \end{align*}
 where  $\omega= (\omega_{1}, \dots, \omega_{k})$. Again, using change of variables  $\ln \left( \frac{2}{r} \right) = \xi$ and  $\ln \left( \frac{2}{t} \right) = \eta$ in  $J_{1}$, we have
\begin{equation*}
    J_{1} =   |\mathbb{S}^{k-1}| \ln^{p} \left( \frac{2}{\epsilon} \right) \int_{\mathbb{S}^{k-1}} \int_{\ln \left( \frac{2}{\epsilon} \right)}^{\infty}\int_{\ln \left( \frac{2}{\epsilon} \right)}^{\infty} \frac{|\xi - \eta|^{p}}{\xi^{p} \eta^{p} \left( e^{\eta - \xi} + e^{\xi- \eta} -2 \omega_{1} \right)^{k} } \, d \xi \, d \eta \, d \omega.
\end{equation*}
Assume  $\eta - \xi >0$, let $z = \eta - \xi$, and using symmetry, we obtain
\begin{align}\label{opt 1}
    J_{1} & \leq  2  |\mathbb{S}^{k-1}| \ln^{p} \left( \frac{2}{\epsilon} \right) \int_{\mathbb{S}^{k-1}} \int_{\ln \left( \frac{2}{\epsilon} \right)}^{\infty} \int_{0}^{\infty} \frac{z^{p}}{\xi^{p} (\xi + z)^{p} ( e^{z} + e^{-z} -2 \omega_{1} )^{k} } \, dz \, d \xi \,  d \omega \nonumber \\ & = 2  |\mathbb{S}^{k-1}| \ln^{p} \left( \frac{2}{\epsilon} \right) \int_{\mathbb{S}^{k-1}} \int_{\ln \left( \frac{2}{\epsilon} \right)}^{\infty} \int_{0}^{2} \frac{z^{p}}{\xi^{p} (\xi + z)^{p} ( e^{z} + e^{-z} -2 \omega_{1} )^{k} } \, dz \, d \xi \,  d \omega \nonumber \\ & \quad + 2  |\mathbb{S}^{k-1}| \ln^{p} \left( \frac{2}{\epsilon} \right) \int_{\mathbb{S}^{k-1}} \int_{\ln \left( \frac{2}{\epsilon} \right)}^{\infty} \int_{2}^{\infty} \frac{z^{p}}{\xi^{p} (\xi + z)^{p} ( e^{z} + e^{-z} -2 \omega_{1} )^{k} } \, dz \, d \xi \,  d \omega  .
\end{align}
Using  $\frac{z^{p}}{(e^{z}+ e^{-z}-2 \omega_{1})^{k}} \in L^{1}((2, \infty))$, we have
\begin{multline}\label{opt 2}
   \ln^{p} \left( \frac{2}{\epsilon} \right) \int_{\mathbb{S}^{k-1}} \int_{\ln \left( \frac{2}{\epsilon} \right)}^{\infty} \int_{2}^{\infty} \frac{z^{p}}{\xi^{p} (\xi + z)^{p} ( e^{z} + e^{-z} -2 \omega_{1} )^{k} } \, dz \, d \xi \,  d \omega \\ \leq C \ln^{p} \left( \frac{2}{\epsilon} \right)  \int_{\ln \left( \frac{2}{\epsilon} \right)}^{\infty} \frac{1}{\xi^{p}(\xi +2)^{p}} \, d \xi \leq C \ln^{p} \left( \frac{2}{\epsilon} \right)  \int_{\ln \left( \frac{2}{\epsilon} \right)}^{\infty} \frac{1}{\xi^{2p}} \, d \xi = \frac{C}{\ln^{p-1} \left( \frac{2}{\epsilon} \right)}.
\end{multline}
Also, using the Taylor series expansion of  $e^{z}+ e^{-z}$ at  $0$, we have
\begin{multline*}
    2  |\mathbb{S}^{k-1}| \ln^{p} \left( \frac{2}{\epsilon} \right) \int_{\mathbb{S}^{k-1}} \int_{\ln \left( \frac{2}{\epsilon} \right)}^{\infty} \int_{0}^{2} \frac{z^{p}}{\xi^{p} (\xi + z)^{p} ( e^{z} + e^{-z} -2 \omega_{1} )^{k} } \, dz \, d \xi \,  d \omega \\ \leq C \ln^{p} \left( \frac{2}{\epsilon} \right) \int_{\mathbb{S}^{k-1}} \int_{\ln \left( \frac{2}{\epsilon} \right)}^{\infty} \int_{0}^{2} \frac{z^{p}}{\xi^{p} (\xi + z)^{p} \left( 2 (1-\omega_{1}) + z^{2}  \right)^{k} } \, dz \, d \xi \,  d \omega.
\end{multline*}
Using  $1- \omega^{2}_{1} = |w'|^{2}$, and  $\frac{1}{1+ \omega_{1}} \geq \frac{1}{2}$, where  $\omega' = (\omega_{2}, \dots, \omega_{k})$, we obtain
\begin{equation*}
\begin{split}
    2  |\mathbb{S}^{k-1}| \ln^{p} \left( \frac{2}{\epsilon} \right) \int_{\mathbb{S}^{k-1}} & \int_{\ln \left( \frac{2}{\epsilon} \right)}^{\infty} \int_{0}^{2} \frac{z^{p}}{\xi^{p} (\xi + z)^{p} ( e^{z} + e^{-z} -2 \omega_{1} )^{k} } \, dz \, d \xi \,  d \omega \\ & \leq C  \ln^{p} \left( \frac{2}{\epsilon} \right) \int_{|\omega'| \leq 1} \int_{\ln \left( \frac{2}{\epsilon} \right)}^{\infty} \int_{0}^{2} \frac{z^{p}}{\xi^{p} (\xi + z)^{p} ( | \omega'|^{2} + z^{2} )^{k} } \, dz \, d \xi \, d \omega' \\ & \leq C \ln^{p} \left( \frac{2}{\epsilon} \right) \int_{\ln \left( \frac{2}{\epsilon} \right)}^{\infty} \frac{1}{\xi^{2p}} d \xi \int_{0}^{2} z^{p} \int_{|\omega'|<1} \frac{1}{(|w'|^{2}+ z^{2})^{k}} \, d \omega' \, dz.
    \end{split}
\end{equation*}
Using  \eqref{appen 1} (see Appendix  \ref{appendix}), we have  $\int_{0}^{2} z^{p} \int_{|\omega'|<1} \frac{1}{(|w'|^{2}+ z^{2})^{k}} \, d \omega' \, dz \leq C$. Therefore, we obtain
\begin{equation}\label{opt 3}
    2  |\mathbb{S}^{k-1}| \ln^{p} \left( \frac{2}{\epsilon} \right) \int_{\mathbb{S}^{k-1}} \int_{\ln \left( \frac{2}{\epsilon} \right)}^{\infty} \int_{0}^{2} \frac{z^{p}}{\xi^{p} (\xi + z)^{p} ( e^{z} + e^{-z} -2 \omega_{1} )^{k} } \, dz \, d\xi \,  d \omega  \leq \frac{C}{\ln^{p-1} \left( \frac{2}{\epsilon} \right)}.
\end{equation}
Combining  \eqref{opt 1},  \eqref{opt 2} and  \eqref{opt 3}, we have
\begin{equation*}
    J_{1} \leq \frac{C}{\ln^{p-1} \left( \frac{2}{\epsilon} \right)}.
\end{equation*}
Now, for  $J_{2}$, using the change of variable  $\eta = x_{k} - y_{k}$, we have
\begin{align*}
    J_{2} & = 2 \bigintsss_{\epsilon<|y_{k}|<1} \bigintsss_{0 < |x_{k}|< \epsilon} \frac{|u_{k}(x_{k}) - u_{k}(y_{k})|^{p}}{|x_{k} - y_{k}|^{2k}} \, dx_{k} \, dy_{k} \\ & = 2  \bigintsss_{0 < |x_{k}|< \epsilon} \frac{ \left| \ln \left( \frac{2}{|x_{k}|} \right) - \ln \left( \frac{2}{\epsilon} \right)  \right|^{p}}{ \ln^{p} \left( \frac{2}{|x_{k}|} \right)} \, dx_{k} \int_{\epsilon< |\eta -x_{k}| < 1} \frac{1}{|\eta|^{2k}}  \, d\eta.
\end{align*}
Since  $\{ \eta : \epsilon< |\eta -x_{k}| < 1  \} \subset \{ \eta : \epsilon - |x_{k}| < |\eta| < 1+ |x_{k}| \}$, it follows that, by applying this inclusion and a change of variable, we obtain
\begin{align*}
    J_{2} & \leq  2  \bigintsss_{0 < |x_{k}|< \epsilon}   \frac{ \left| \ln \left( \frac{2}{|x_{k}|} \right) - \ln \left( \frac{2}{\epsilon} \right)  \right|^{p}}{ \ln^{p} \left( \frac{2}{|x_{k|}} \right)}  \, dx_{k} \bigintsss_{\epsilon-|x_{k}|< |\eta| < 1+ |x_{k}|} \frac{1}{|\eta|^{2k}} \, d\eta \\ & = 2 |\mathbb{S}^{k-1}| \bigintsss_{0 < |x_{k}|< \epsilon} \frac{ \left| \ln \left( \frac{2}{|x_{k}|} \right) - \ln \left( \frac{2}{\epsilon} \right)  \right|^{p}}{ \ln^{p} \left( \frac{2}{|x_{k}|} \right)} \, dx_{k} \int_{\epsilon-|x_{k}|}^{1+ |x_k|} \frac{r^{k-1}}{r^{2k}} \,  dr \\ & \leq C \bigintsss_{0 < |x_{k}|< \epsilon} \frac{ \left| \ln \left( \frac{2}{|x_{k}|} \right) - \ln \left( \frac{2}{\epsilon} \right)  \right|^{p}}{ \ln^{p} \left( \frac{2}{|x_{k}|} \right)(\epsilon-|x_{k}|)^{k}} \, dx_{k}.
\end{align*}
Again, applying the change of variable in polar coordinates, we obtain
\begin{equation*}
    J_{2} \leq C \bigintsss_{0}^{\epsilon} \frac{ \left| \ln \left( \frac{2}{z} \right) - \ln \left( \frac{2}{\epsilon} \right)  \right|^{p}}{ \ln^{p} \left( \frac{2}{z} \right)(\epsilon-z)^{k}} z^{k-1} \,  dz.
\end{equation*}
Let  $t= \frac{\ln \left( \frac{2}{z} \right)}{ \ln \left( \frac{2}{\epsilon} \right)} - 1$. Then the above inequality will become
\begin{equation*}
    J_{2} \leq C \ln \left( \frac{2}{\epsilon} \right) \bigintsss_{0}^{\infty} \frac{t^{p}}{(t+1)^{p} \left( e^{t \ln \left( \frac{2}{\epsilon} \right)} - 1 \right)^{k}} \, dt \leq C \ln \left( \frac{2}{\epsilon} \right) \bigintsss_{0}^{\infty} \frac{t^{p}}{ \left( e^{t \ln \left( \frac{2}{\epsilon} \right)} - 1 \right)^{k}} \, dt.
\end{equation*}
Applying the change of variable  $q= t \ln \left( \frac{2}{\epsilon} \right)$ and using the fact that $\int_{0}^{\infty} \frac{t^{q}}{\left( e^{q} -1\right)^{k}} \, dq < \infty$, we get
\begin{equation*}
    J_{2} \leq \frac{C}{\ln^{p} \left( \frac{2}{\epsilon} \right)} \int_{0}^{\infty} \frac{q^{p}}{ \left( e^{q} -1 \right)^{k}} \, dq \leq \frac{C}{\ln^{p} \left( \frac{2}{\epsilon} \right)}.
\end{equation*}
Combining  $J_{1}$ and  $J_{2}$ in  \eqref{opt eq gsk}, we obtain
\begin{equation*}
     [u_{k}]^{p}_{W^{s,p}(B^{k}_{1}(0))} \leq \frac{C}{\ln^{p-1} \left( \frac{2}{\epsilon} \right)}.
\end{equation*}
This finishes the proof of lemma.
\end{proof}

\smallskip

We now define a sequence of functions using  $u_{k}$ that belongs to  $W^{s,p}_{0}(\Omega \setminus K)$, thereby ensuring that Theorem  \ref{theorem 1 sp=k} holds true for this functions. The next lemma establishes a sequence of functions $\{ \Phi_{\mu} \}_{\mu, \epsilon}$ in $W^{s,p}_{0}(\Omega \setminus K)$, where $\Omega = B^{k}_{1}(0) \times (0,1)^{d-k}$ with  $ K= \{ x= (x_{k}, x_{d-k}) \in \mathbb{R}^{d} : x_{k} = 0  \ \text{and} \ x_{d-k} \in [0,1]^{d-k} \}$ which converges to a constant function $1$ in $W^{s,p}_{0}(\Omega \setminus K )$ as $\mu \to 0$ and then $\epsilon \to 0$.

\begin{lemma}\label{lemma for Phi mu}
    Let $1<k<d$ with $k \in \mathbb{N}$ and $sp=k$, and let  $\Omega = B^{k}_{1}(0) \times (0,1)^{d-k}$ with  $ K= \{ x= (x_{k}, x_{d-k}) \in \mathbb{R}^{d} : x_{k} = 0  \ \text{and} \ x_{d-k} \in [0,1]^{d-k} \}$. Let   $  0< \mu< \epsilon <1$  and $\Phi_{\mu} \in W^{s,p}_{0}(\Omega \setminus K)$ defined as 
\begin{equation}\label{fn phi_mu}
    \Phi_{\mu}(x_{k}, x_{d-k}) = \begin{cases}
    0, & 0 \leq |x_{k}| \leq \mu \\
    \left( \frac{\ln \left( \frac{2}{\epsilon} \right)}{ \ln \left( \frac{2}{|x_{k}|} \right)} - \frac{\ln \left( \frac{2}{\epsilon} \right)}{ \ln \left( \frac{2}{\mu} \right)} \right) \frac{1}{1- \frac{\ln \left( \frac{2}{\epsilon} \right) }{ \ln \left( \frac{2}{\mu} \right) }}, & \mu< |x_{k}| < \epsilon \\
        1, &  \epsilon \leq |x_{k}| < 1  . 
    \end{cases}
\end{equation}
Then $\Phi_{\mu}$ converges to a constant function $1$ in $W^{s,p}_{0}(\Omega \setminus K)$ as $\mu \to 0$ and then $\epsilon \to 0$.
\end{lemma}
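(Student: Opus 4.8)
The plan is to show that each $\Phi_\mu$ lies in the closed subspace $W^{s,p}_0(\Omega\backslash K)$, so that it suffices to prove $\Phi_\mu\to 1$ in the $W^{s,p}(\Omega)$-norm, and then to split $\|\Phi_\mu-1\|_{W^{s,p}(\Omega)}^p$ into its $L^p$-part and its Gagliardo-seminorm part and control each. For the membership: $\Phi_\mu$ vanishes identically on the neighbourhood $\{|x_k|\le\mu\}$ of $K$; it is continuous on $\overline\Omega$, since the middle branch of \eqref{fn phi_mu} equals $0$ at $|x_k|=\mu$ and $1$ at $|x_k|=\epsilon$, so the three pieces match; it satisfies $0\le\Phi_\mu\le1$; and on $\{\mu<|x_k|<\epsilon\}$ its gradient is controlled by a multiple of $|x_k|^{-1}\ln^{-2}(2/|x_k|)$, which is bounded there for $\mu$ (and $\epsilon$) fixed. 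Hence $\Phi_\mu$ is a bounded Lipschitz function, so $\Phi_\mu\in W^{s,p}(\Omega)\cap C(\Omega)$, and therefore $\Phi_\mu\in W^{s,p}_0(\Omega\backslash K)$. For the $L^p$-part, since $\Phi_\mu\equiv1$ on $\{|x_k|\ge\epsilon\}$ and $0\le\Phi_\mu\le1$,
\[
\|\Phi_\mu-1\|_{L^p(\Omega)}^p=\int_{\{|x_k|<\epsilon\}}(1-\Phi_\mu)^p\,dx\le\big|B^k_\epsilon(0)\times(0,1)^{d-k}\big|=C\epsilon^k,
\]
uniformly in $\mu$, and this tends to $0$ as $\epsilon\to0$.

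The real work is the Gagliardo seminorm, where the constant $1$ drops out, so $[\Phi_\mu-1]_{W^{s,p}(\Omega)}=[\Phi_\mu]_{W^{s,p}(\Omega)}$. I would first reduce this $d$-dimensional seminorm to a $k$-dimensional one: writing $\Phi_\mu(x)=\phi_\mu(x_k)$, using $|x-y|^2=|x_k-y_k|^2+|x_{d-k}-y_{d-k}|^2$, and invoking the hypothesis $sp=k$,
\[
[\Phi_\mu]_{W^{s,p}(\Omega)}^p=\int_\Omega\int_\Omega\frac{|\phi_\mu(x_k)-\phi_\mu(y_k)|^p}{\big(|x_k-y_k|^2+|x_{d-k}-y_{d-k}|^2\big)^{(d+k)/2}}\,dx\,dy\le C\,[\phi_\mu]_{W^{s,p}(B^k_1(0))}^p,
\]
the inequality coming from $\int_{\mathbb{R}^{d-k}}(r^2+|h|^2)^{-(d+k)/2}\,dh=C_{d,k}\,r^{-2k}$ (a convergent integral since $k\ge1$), from $|(0,1)^{d-k}|=1$, and from the identity $2k=k+sp$.

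It then remains to show $[\phi_\mu]_{W^{s,p}(B^k_1(0))}\to0$ under the successive limits, and here the key observation is that $\phi_\mu=F\circ u_k$ on $B^k_1(0)$, where $u_k$ is the function in \eqref{fn u_k} and $F(t)=\max\{0,(t-c)/(1-c)\}$ with $c=\ln(2/\epsilon)/\ln(2/\mu)\in(0,1)$; this is verified by a direct comparison on the three regions $\{|x_k|\le\mu\}$, $\{\mu<|x_k|<\epsilon\}$, $\{|x_k|\ge\epsilon\}$, using that $u_k\le c$ on the first and $u_k\equiv1$ on the third. Since $F$ is $(1-c)^{-1}$-Lipschitz, $|\phi_\mu(x_k)-\phi_\mu(y_k)|\le(1-c)^{-1}|u_k(x_k)-u_k(y_k)|$, so Lemma \ref{lemma on u_k} gives
\[
[\phi_\mu]_{W^{s,p}(B^k_1(0))}^p\le\frac{1}{(1-c)^p}\,[u_k]_{W^{s,p}(B^k_1(0))}^p\le\frac{C}{(1-c)^p\ln^{p-1}(2/\epsilon)}.
\]
Combining the three pieces yields $\|\Phi_\mu-1\|_{W^{s,p}(\Omega)}^p\le C\epsilon^k+C(1-c)^{-p}\ln^{1-p}(2/\epsilon)$; for fixed $\epsilon$, $c\to0$ as $\mu\to0$, hence $\limsup_{\mu\to0}\|\Phi_\mu-1\|_{W^{s,p}(\Omega)}^p\le C\epsilon^k+C\ln^{1-p}(2/\epsilon)$, and this bound tends to $0$ as $\epsilon\to0$ because $p>1$. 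This is exactly the asserted convergence.

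The step I expect to need the most care is the dimensional reduction of the Gagliardo seminorm: one must check that integrating out the $d-k$ transverse coordinates turns the $d$-dimensional $W^{s,p}$-kernel (with $sp=k$) into a constant multiple of the $k$-dimensional $W^{s,p}$-kernel of the same order $s$, which is exactly where the hypothesis $sp=k$ is indispensable. The slow, purely logarithmic decay furnished by Lemma \ref{lemma on u_k} is also what forces the two limits to be taken one after the other rather than jointly, in line with the optimality of the $\ln^p$ weight in Theorem \ref{theorem 1 sp=k}.
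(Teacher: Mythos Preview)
Your proof is correct and in fact cleaner than the paper's. Both arguments first reduce the $d$-dimensional Gagliardo seminorm to the $k$-dimensional one by integrating out the transverse variables (the paper calls this the ``slicing lemma'', Section~\ref{slicing lemma}), and both ultimately invoke Lemma~\ref{lemma on u_k} for the decisive estimate $[u_k]_{W^{s,p}(B^k_1(0))}^p\le C\ln^{1-p}(2/\epsilon)$. The difference lies in how $\phi_\mu$ is compared to $u_k$: the paper decomposes $[\phi_\mu]^p_{W^{s,p}(B^k_1(0))}$ into four pieces $L_1,\dots,L_4$ according to which of the regions $\{|x_k|\le\mu\}$, $\{\mu<|x_k|<\epsilon\}$, $\{\epsilon\le|x_k|<1\}$ contain $x_k$ and $y_k$, and then bounds each $|\Phi_\mu(x)-\Phi_\mu(y)|$ by a constant times $|u_k(x_k)-u_k(y_k)|$ region by region, with different constants in each case. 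Your observation that $\phi_\mu=F\circ u_k$ for the $(1-c)^{-1}$-Lipschitz map $F(t)=\max\{0,(t-c)/(1-c)\}$, $c=\ln(2/\epsilon)/\ln(2/\mu)$, collapses this four-case analysis into a single line, yielding immediately $[\phi_\mu]^p\le(1-c)^{-p}[u_k]^p$. What the paper's decomposition buys is slightly sharper constants on the pieces $L_3$ and $L_4$ (where it gets absolute constants rather than the $\mu$-dependent $(1-c)^{-p}$), but this refinement is irrelevant once one passes to the limit $\mu\to0$, so your global Lipschitz bound loses nothing.
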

\begin{proof}
Since, by the dominated convergence theorem,  $\Phi_{\mu} \to 1$ as  $\mu \to 0$ and then  $\epsilon \to 0$ in  $L^{p}(\Omega)$. Therefore, it is sufficient to prove $[\Phi_{\mu}]^{p}_{W^{s,p}(\Omega)} \to 0$ as $\mu \to 0$ and then $\epsilon \to 0$. Let us calculate the Gagliardo seminorm of $\Phi_{\mu}$ and using slicing lemma (see  \eqref{slicing lemma}, Appendix  \ref{appendix}) with $sp=k$, we have
    \begin{equation}\label{eqn on lemma Phi mu}
    \begin{split}
        [\Phi_{\mu}]^{p}_{W^{s,p}(\Omega)} & \leq  C \int_{B^{k}_{1}(0)} \int_{B^{k}_{1}(0)} \frac{|\Phi_{\mu}(x_{k},x_{d-k})- \Phi_{\mu}(y_{k},y_{d-k})|^{p}}{|x_{k}-y_{k}|^{2k}} \, dx_{k} \, dy_{k} \\ & = C \Bigg( \int_{\mu < |y_{k}|< \epsilon} \int_{\mu < |x_{k}|< \epsilon} \frac{|\Phi_{\mu}(x_{k},x_{d-k})- \Phi_{\mu}(y_{k},y_{d-k})|^{p}}{|x_{k}-y_{k}|^{2k}} \, dx_{k} \, dy_{k} \\ & \quad  + 2  \int_{0 < |y_{k}|< \mu} \int_{\mu < |x_{k}|< \epsilon} \frac{|\Phi_{\mu}(x_{k},x_{d-k})- \Phi_{\mu}(y_{k},y_{d-k})|^{p}}{|x_{k}-y_{k}|^{2k}} \, dx_{k} \, dy_{k} \\ & \quad + 2  \int_{0 < |y_{k}|< \mu} \int_{\epsilon < |x_{k}|< 1} \frac{|\Phi_{\mu}(x_{k},x_{d-k})- \Phi_{\mu}(y_{k},y_{d-k})|^{p}}{|x_{k}-y_{k}|^{2k}} \, dx_{k} \, dy_{k} \\ & \quad + 2  \int_{\mu < |y_{k}|< \epsilon} \int_{\epsilon < |x_{k}|< 1} \frac{|\Phi_{\mu}(x_{k},x_{d-k})- \Phi_{\mu}(y_{k},y_{d-k})|^{p}}{|x_{k}-y_{k}|^{2k}} \, dx_{k} \, dy_{k} \Bigg)  \\ & = : L_{1}+ L_{2} + L_{3} + L_{4}.
    \end{split}
    \end{equation}
    For $L_{1}$, given $\mu < |x_{k}|< \epsilon$ and $\mu < |y_{k}| < \epsilon$, we have
    \begin{equation*}
        |\Phi_{\mu}(x_{k},x_{d-k})- \Phi_{\mu}(y_{k},y_{d-k})| = \left( \frac{\ln \left( \frac{2}{\mu} \right)}{\ln \left( \frac{2}{\mu} \right) - \ln \left( \frac{2}{\epsilon} \right) } \right) |u_{k}(x_{k}) - u_{k}(y_{k})|,
    \end{equation*}
    where $u_{k}$ as defined in  \eqref{fn u_k}. Therefore, we have
    \begin{align*}
        L_{1} & = \left( \frac{\ln \left( \frac{2}{\mu} \right)}{\ln \left( \frac{2}{\mu} \right) - \ln \left( \frac{2}{\epsilon} \right) } \right)^{p} \int_{\mu < |x_{k}|< \epsilon} \int_{\mu < |y_{k}|< \epsilon} \frac{|u_{k}(x_{k})- u_{k}(y_{k})|^{p}}{|x_{k}-y_{k}|^{2k}} \, dx_{k} \,  dy_{k} \\ & \leq \left( \frac{\ln \left( \frac{2}{\mu} \right)}{\ln \left( \frac{2}{\mu} \right) - \ln \left( \frac{2}{\epsilon} \right) } \right)^{p} \int_{0 < |x_{k}|< \epsilon} \int_{0 < |y_{k}|< \epsilon} \frac{|u_{k}(x_{k})- u_{k}(y_{k})|^{p}}{|x_{k}-y_{k}|^{2k}} \, dx_{k} \, dy_{k} .
    \end{align*}
    For $L_{2}$,  given $0 < |y_{k}|< \mu$ and $\mu < |x_{k}| < \epsilon$, we have $ \ln \left( \frac{2}{\mu} \right) < \ln \left( \frac{2}{|y_{k}|} \right)$. Therefore, we obtain
    \begin{equation*}
    \begin{split}
        |\Phi_{\mu}(x_{k},x_{d-k})- \Phi_{\mu}(y_{k},y_{d-k})| & =  \left( \frac{\ln \left( \frac{2}{\mu} \right)}{\ln \left( \frac{2}{\mu} \right) - \ln \left( \frac{2}{\epsilon} \right)} \right)   \left|  \frac{\ln \left( \frac{2}{\epsilon} \right)}{ \ln \left( \frac{2}{|x_{k}|} \right)} - \frac{\ln \left( \frac{2}{\epsilon} \right)}{ \ln \left( \frac{2}{\mu} \right)} \right| \\ & \leq  \left( \frac{\ln \left( \frac{2}{\mu} \right)}{\ln \left( \frac{2}{\mu} \right) - \ln \left( \frac{2}{\epsilon} \right)} \right) \left|  \frac{\ln \left( \frac{2}{\epsilon} \right)}{ \ln \left( \frac{2}{|x_{k}|} \right)} - \frac{\ln \left( \frac{2}{\epsilon} \right)}{ \ln \left( \frac{2}{|y_{k}|} \right)} \right| \\ & = \left( \frac{\ln \left( \frac{2}{\mu} \right)}{\ln \left( \frac{2}{\mu} \right) - \ln \left( \frac{2}{\epsilon} \right)} \right) |u_{k}(x_{k})- u_{k}(y_{k})|,
    \end{split}
    \end{equation*}
    where $u_{k}$ as defined in  \eqref{fn u_k}. Therefore, we have
    \begin{align*}
        L_{2} & \leq \left( \frac{\ln \left( \frac{2}{\mu} \right)}{\ln \left( \frac{2}{\mu} \right) - \ln \left( \frac{2}{\epsilon} \right) } \right)^{p} \int_{0 < |y_{k}|< \mu} \int_{\mu < |x_{k}|< \epsilon} \frac{|u_{k}(x_{k})- u_{k}(y_{k})|^{p}}{|x_{k}-y_{k}|^{2k}} \, dx_{k} \, dy_{k} \\ & \leq \left( \frac{\ln \left( \frac{2}{\mu} \right)}{\ln \left( \frac{2}{\mu} \right) - \ln \left( \frac{2}{\epsilon} \right) } \right)^{p} \int_{0 < |y_{k}|< \epsilon} \int_{0 < |x_{k}|< \epsilon} \frac{|u_{k}(x_{k})- u_{k}(y_{k})|^{p}}{|x_{k}-y_{k}|^{2k}} \, dx_{k} \, dy_{k} .
    \end{align*}
    Now for $L_{3}$. Let $0< |y_{k}|< \mu$ and choose $C>0$ such that $\frac{1}{C} \leq 1- \frac{\ln \left( \frac{2}{\epsilon} \right)}{ \ln \left( \frac{2}{|y_{k}|} \right) }$. Since, as $\mu \to 0$, we have $\frac{\ln \left( \frac{2}{\epsilon} \right)}{ \ln \left( \frac{2}{|y_{k}|} \right)} \to 0$, it follows that for sufficiently small $\mu>0$, we have $\frac{1}{2} \leq  \frac{1}{C} \leq 1- \frac{\ln \left( \frac{2}{\epsilon} \right)}{ \ln \left( \frac{2}{|y_{k}|} \right) }$. Therefore, for any $0< |y_{k}|< \mu$ and $\epsilon < |x_{k}| < 1$ with sufficiently small $\mu$, we have
    \begin{equation*}
        |\Phi_{\mu}(x_{k},x_{d-k})- \Phi_{\mu}(y_{k},y_{d-k})| =1 \leq 2 \left| 1- \frac{\ln \left( \frac{2}{\epsilon} \right)}{ \ln \left( \frac{2}{|y_{k}|} \right) }  \right| = 2 |u_{k}(x_{k})- u_{k}(y_{k})|,
    \end{equation*}
  where $u_{k}$ as defined in  \eqref{fn u_k}.   Therefore, for sufficiently small $\mu>0$, we have
    \begin{align*}
        L_{3} & \leq 2^{p} \int_{0 < |y_{k}|< \mu} \int_{\epsilon < |x_{k}|< 1} \frac{|u_{k}(x_{k})- u_{k}(y_{k})|^{p}}{|x_{k}-y_{k}|^{2k}} \, dx_{k} \, dy_{k} \\ & \leq 2^{p} \int_{0 < |y_{k}|< \epsilon} \int_{\epsilon < |x_{k}|< 1} \frac{|u_{k}(x_{k})- u_{k}(y_{k})|^{p}}{|x_{k}-y_{k}|^{2k}} \, dx_{k} \, dy_{k} .
    \end{align*}
  Finally, for $L_{4}$. Since for any $\mu< |y_{k}| < \epsilon$, it follows that
  \begin{equation*}
      1 - \left( \frac{\ln \left( \frac{2}{\epsilon} \right)}{ \ln \left( \frac{2}{|y_{k}|} \right)} - \frac{\ln \left( \frac{2}{\epsilon} \right)}{ \ln \left( \frac{2}{\mu} \right)} \right) \frac{1}{1- \frac{\ln \left( \frac{2}{\epsilon} \right) }{ \ln \left( \frac{2}{\mu} \right) }} \to 1- \frac{\ln \left( \frac{2}{\epsilon}  \right)}{ \ln \left( \frac{2}{|y_{k}|} \right)} \hspace{3mm} \text{as} \hspace{3mm} \mu \to 0.
  \end{equation*}
  Therefore, for sufficiently small $\mu>0$, given $\mu< |y_{k}| < \epsilon$ and $\epsilon < |x_{k}| <1$, we have
  \begin{align*}
      |\Phi_{\mu}(x_{k},x_{d-k})- \Phi_{\mu}(y_{k},y_{d-k})| & = \left| 1 - \left( \frac{\ln \left( \frac{2}{\epsilon} \right)}{ \ln \left( \frac{2}{|y_{k}|} \right)} - \frac{\ln \left( \frac{2}{\epsilon} \right)}{ \ln \left( \frac{2}{\mu} \right)} \right) \frac{1}{1- \frac{\ln \left( \frac{2}{\epsilon} \right) }{ \ln \left( \frac{2}{\mu} \right) }} \right| \\ & \leq 5 \left| 1- \frac{\ln \left( \frac{2}{\epsilon}  \right)}{ \ln \left( \frac{2}{|y_{k}|} \right)} \right| = 5 | u_{k}(x_{k})-u_{k}(y_{k})|.
  \end{align*}
  Therefore, for sufficiently small $\mu >0$, we have
  \begin{align*}
      L_{4} & \leq 5^{p} \int_{\mu < |y_{k}|< \epsilon} \int_{\epsilon < |x_{k}|< 1} \frac{|u_{k}(x_{k})- u_{k}(y_{k})|^{p}}{|x_{k}-y_{k}|^{2k}} \, dx_{k} \, dy_{k} \\ & \leq 5^{p} \int_{0 < |y_{k}|< \epsilon} \int_{\epsilon < |x_{k}|< 1} \frac{|u_{k}(x_{k})- u_{k}(y_{k})|^{p}}{|x_{k}-y_{k}|^{2k}} \, dx_{k} \, dy_{k} .
  \end{align*}
  Combining  $L_{1}, ~ L_{2}, ~ L_{3}$ and  $L_{4}$ in  \eqref{eqn on lemma Phi mu}, taking limit $\mu \to 0$ and using Lemma  \ref{lemma on u_k}, we obtain for some suitable constant  $C>0$ does not depends on  $\mu$ and  $\epsilon$,
  \begin{equation*}
      \lim_{\mu \to 0}  [\Phi_{\mu}]^{p}_{W^{s,p}(\Omega)} \leq C[u_{k}]^{p}_{W^{s,p}(B^{k}_{1}(0))} \leq \frac{C}{ \ln^{p-1} \left( \frac{2}{\epsilon} \right)}.
  \end{equation*}
Now taking $\epsilon \to 0$ in the above inequality, we obtain $[\Phi_{\mu}]^{p}_{W^{s,p}(\Omega)} \to 0$. This proves the lemma.
\end{proof}

Dyda and Kijaczko in  \cite[Lemma 13]{dyda2022} proved that  $W^{s,p}(\Omega)= W^{s,p}_{0}(\Omega)$ if and only if the constant function  $1 \in W^{s,p}_{0}(\Omega)$. Using similar techniques illustrated in  \cite[Lemma 13]{dyda2022}, we will prove the following lemma for the case  $sp=k$.

\begin{lemma}\label{density lemma}
    Let  $\Omega$ be a bounded domain in $\mathbb{R}^{d}$, where $d \geq 3$ and let  $K \subset \Omega$ be a compact set of codimension $k$ of class  $C^{0,1}$, where $1<k<d, ~k \in \mathbb{N}$ and $sp=k$. Then $ W^{s,p}_{0}(\Omega \setminus K) = W^{s,p}(\Omega)$.
\end{lemma}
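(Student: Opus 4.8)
The inclusion $W^{s,p}_{0}(\Omega\setminus K)\subseteq W^{s,p}(\Omega)$ is immediate from the definition, so the task is the reverse inclusion. Following the idea of \cite[Lemma 13]{dyda2022}, the plan is to reduce everything to the single statement that the constant function $1$ lies in $W^{s,p}_{0}(\Omega\setminus K)$, and then to upgrade this to the full density by cutting off an arbitrary smooth function near $K$. Concretely: (a) patch the flat cut-offs of Lemma \ref{lemma for Phi mu} over a finite chart cover of $K$ to produce a single family $\psi_{\mu}\in C(\Omega)$ with $0\le\psi_{\mu}\le1$, with $\psi_{\mu}\equiv0$ in a neighbourhood of $K$, with $\psi_{\mu}\equiv1$ off a neighbourhood $N$ of $K$ whose measure tends to $0$ along the limiting process ($\mu\to0$, then $\epsilon\to0$), and with $\psi_{\mu}\to1$ in $W^{s,p}(\Omega)$; (b) prove $u\psi_{\mu}\to u$ in $W^{s,p}(\Omega)$ for every $u\in C^{\infty}(\overline\Omega)$; (c) conclude, since $C^{\infty}(\overline\Omega)$ is dense in $W^{s,p}(\Omega)$ (as $\Omega$ is a bounded Lipschitz domain, hence a $W^{s,p}$-extension domain), $W^{s,p}_{0}(\Omega\setminus K)$ is closed, and each $u\psi_{\mu}$ belongs to $W^{s,p}(\Omega)\cap C(\Omega)$ and vanishes near $K$.

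For (a): cover the compact set $K$ by finitely many balls $B_{r_{1}}(x_{1}),\dots,B_{r_{n}}(x_{n})$ carrying the isomorphisms $T_{i}\colon Q\to B_{r_{i}}(x_{i})$ of Definition \ref{definition}, let $\Phi^{(i)}_{\mu}$ denote the function of Lemma \ref{lemma for Phi mu} transported to $B_{r_{i}}(x_{i})$ via $T_{i}$, and take a partition of unity $\{\eta_{i}\}_{i=0}^{n}$ subordinate to $\{\{\delta_{K}>R_{0}\},B_{r_{1}}(x_{1}),\dots,B_{r_{n}}(x_{n})\}$ for some $R_{0}>0$. Put $\psi_{\mu}:=\eta_{0}+\sum_{i=1}^{n}\eta_{i}\Phi^{(i)}_{\mu}$. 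Then $\psi_{\mu}\in C(\Omega)$, $0\le\psi_{\mu}\le1$, $\psi_{\mu}\equiv0$ near $K$ (there $\eta_{0}=0$ and each $\Phi^{(i)}_{\mu}=0$), and $\psi_{\mu}\equiv1$ off a neighbourhood $N$ of $K$ with $|N|\to0$ (there every $\Phi^{(i)}_{\mu}=1$). The convergence $\psi_{\mu}\to1$ in $W^{s,p}(\Omega)$ reduces, by the bi-Lipschitz change of variables and Lemma \ref{lemma for Phi mu}, to showing $\eta_{i}g_{\mu}\to0$ in $W^{s,p}(\Omega)$ for each $i\ge1$, where $g_{\mu}:=1-\Phi^{(i)}_{\mu}\to0$ in $W^{s,p}(\Omega)$ and $0\le g_{\mu}\le1$ is supported in a set $S_{\epsilon}$ with $|S_{\epsilon}|\to0$. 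Splitting the Gagliardo seminorm via $\eta_{i}(x)g_{\mu}(x)-\eta_{i}(y)g_{\mu}(y)=\eta_{i}(x)\bigl(g_{\mu}(x)-g_{\mu}(y)\bigr)+g_{\mu}(y)\bigl(\eta_{i}(x)-\eta_{i}(y)\bigr)$ bounds $[\eta_{i}g_{\mu}]^{p}_{W^{s,p}(\Omega)}$ by $C\|\eta_{i}\|^{p}_{\infty}[g_{\mu}]^{p}_{W^{s,p}(\Omega)}+C\int_{S_{\epsilon}}g_{\mu}(y)^{p}\Bigl(\int_{\Omega}\frac{\min(L|x-y|,1)^{p}}{|x-y|^{d+sp}}\,dx\Bigr)dy$, with $L$ a Lipschitz constant of $\eta_{i}$; the first term vanishes since $[g_{\mu}]_{W^{s,p}(\Omega)}\to0$, and in the second the inner integral is bounded uniformly in $y$ (it is finite precisely because $s<1$), so the term is controlled by $C\|g_{\mu}\|^{p}_{L^{p}(\Omega)}\to0$. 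Hence $\psi_{\mu}\to1$ in $W^{s,p}(\Omega)$, so $1\in W^{s,p}_{0}(\Omega\setminus K)$, and moreover the $\psi_{\mu}$ are the cut-offs needed in step (b).

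For (b): fix $u\in C^{\infty}(\overline\Omega)$, so $u$ is bounded and $u\psi_{\mu}$ is Lipschitz, hence lies in $W^{s,p}(\Omega)\cap C(\Omega)$ and vanishes near $K$. Writing $u(x)\bigl(\psi_{\mu}(x)-1\bigr)-u(y)\bigl(\psi_{\mu}(y)-1\bigr)=u(x)\bigl(\psi_{\mu}(x)-\psi_{\mu}(y)\bigr)+\bigl(u(x)-u(y)\bigr)\bigl(\psi_{\mu}(y)-1\bigr)$ and using $|\psi_{\mu}-1|\le1$ together with the fact that $\psi_{\mu}-1$ is supported in $N$, one gets $[u\psi_{\mu}-u]^{p}_{W^{s,p}(\Omega)}\le C\|u\|^{p}_{L^{\infty}(\Omega)}[\psi_{\mu}]^{p}_{W^{s,p}(\Omega)}+C\int_{\Omega}\int_{N}\frac{|u(x)-u(y)|^{p}}{|x-y|^{d+sp}}\,dy\,dx$; the first term vanishes by (a) and the second by absolute continuity of the integral, since $|x-y|^{-d-sp}|u(x)-u(y)|^{p}\in L^{1}(\Omega\times\Omega)$ while $|\Omega\times N|\to0$. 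Combined with $\|u\psi_{\mu}-u\|_{L^{p}(\Omega)}\le\|u\|_{L^{\infty}(\Omega)}\|\psi_{\mu}-1\|_{L^{p}(\Omega)}\to0$, this yields $u\psi_{\mu}\to u$ in $W^{s,p}(\Omega)$, hence $u\in W^{s,p}_{0}(\Omega\setminus K)$; density of $C^{\infty}(\overline\Omega)$ in $W^{s,p}(\Omega)$ and closedness of $W^{s,p}_{0}(\Omega\setminus K)$ then finish the proof.

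The main obstacle I expect is step (a): gluing the transported cut-offs into a globally continuous function and controlling the cross contributions to the Gagliardo seminorm between the region where $\Phi^{(i)}_{\mu}\neq1$ and its complement. Using the partition of unity $\{\eta_{i}\}$ rather than a naive $\min$-construction keeps continuity automatic, and the key analytic point is that in $[\eta_{i}g_{\mu}]_{W^{s,p}}$ the potentially dangerous term carries the difference quotient of the \emph{Lipschitz} factor $\eta_{i}$ (whose seminorm kernel integrates to a bound uniform in the base point) multiplied by the factor $g_{\mu}$ that is small in $L^{p}$ and has vanishing support; so a crude product estimate, grouped correctly, already suffices and no delicate cancellation is needed.
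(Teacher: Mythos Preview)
Your proof is correct and follows essentially the same route as the paper: build cut-offs from the flat $\Phi_{\mu}$ (you carry out the partition-of-unity patching explicitly, whereas the paper just writes ``follows from patching business''), show $1\in W^{s,p}_{0}(\Omega\setminus K)$, then multiply $u$ by these cut-offs and pass to the limit via the same product splitting of the Gagliardo seminorm. One minor point: you reduce to $u\in C^{\infty}(\overline{\Omega})$ via density, which uses that $\Omega$ is Lipschitz, while the lemma as stated only assumes $\Omega$ is a bounded domain; the paper instead reduces to $u\in L^{\infty}(\Omega)$ by truncation (no boundary regularity needed), after which your step~(b) argument works verbatim for bounded $u$.
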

\begin{proof}
    It is sufficient to prove for flat boundary case. For general domain  $\Omega$ and a compact set  $K \subset \Omega$ of codimension  $k$ of class  $C^{0,1}$, where $1<k<d, ~ k \in \mathbb{N}$ follows from patching business. Let  $\Omega = B^{k}_{1}(0) \times (0,1)^{d-k}$ and  $K= \{ x= (x_{k}, x_{d-k}) \in \mathbb{R}^{d} : x_{k} = 0  \ \text{and} \ x_{d-k} \in [0,1]^{d-k} \}$. Consider a function  $\Phi_{\mu}$ defined in  \eqref{fn phi_mu}. Then  $\Phi_{\mu} \in W^{s,p}_{0}(\Omega \setminus K)$ and using Lemma \ref{lemma for Phi mu}, we have the sequence of functions  $\{ \Phi_{\mu} \}_{\epsilon,  \mu}$ in  $W^{s,p}_{0}(\Omega \setminus K)$ converges to a constant function  $1$ and  $W^{s,p}_{0}(\Omega \setminus K)$ is a closed subspace of  $W^{s,p}(\Omega)$ which imply  $1 \in W^{s,p}_{0}(\Omega \setminus K)$. Let  $u \in W^{s,p}(\Omega)$ and we may also assume  $u \in L^{\infty}(\Omega)$. Define a sequence of functions  $\{h_{\mu}=u \Phi_{\mu} \}_{\epsilon,  \mu}$. Then  $h_{\mu} \in W^{s,p}_{0}(\Omega \setminus K)$ and clearly  $h_{\mu} \to u$ in  $L^{p}(\Omega)$ as  $\mu \to 0$ and then  $\epsilon \to 0$. Now let us calculate the Gagliardo seminorm of  $h_{\mu}-u$,
    \begin{equation*}
    \begin{split}
        [h_{\mu} - u]^{p}_{W^{s,p}(\Omega)} & = \int_{\Omega} \int_{\Omega} \frac{|u(x)(1-\Phi_{\mu}(x))-u(y)(1- \Phi_{\mu}(y))|^{p}}{|x-y|^{d+sp}} \, dx \, dy \\ & \leq 2^{p-1} \int_{\Omega} \int_{\Omega}  \frac{|u(x)|^{p}|\Phi_{\mu}(x)-\Phi_{\mu}(y)|^{p}}{|x-y|^{d+sp}} \, dx \, dy \\ & \quad + 2^{p-1} \int_{\Omega} \int_{\Omega}  \frac{|1-\Phi_{\mu}(x)|^{p}|u(x)-u(y)|^{p}}{|x-y|^{d+sp}} \, dx \, dy \\ & \leq  2^{p-1}\|u\|^{p}_{\infty} [\Phi_{\mu}]^{p}_{W^{s,p}(\Omega)} + 2^{p-1} \int_{\Omega} \int_{\Omega}  \frac{|1-\Phi_{\mu}(x)|^{p}|u(x)-u(y)|^{p}}{|x-y|^{d+sp}} \, dx \, dy.
    \end{split}
    \end{equation*} 
    Since $\Phi_{\mu} \to 1$ in $L^{p}(\Omega)$ as $\mu \to 0$ and then $\epsilon \to 0$, there exists a subsequence (which we again denote by $\{\Phi_{\mu}\}$) such that $\Phi_{\mu} \to 1$ almost everywhere as $\mu \to 0$ and then $\epsilon \to 0$. Therefore, using the dominated convergence theorem and the fact that   $[\Phi_{\mu}]^{p}_{W^{s,p}(\Omega)} \to 0$ as  $\mu \to 0$ and then  $\epsilon \to 0$, we obtain
    \begin{equation*}
        [h_{\mu} - u]^{p}_{W^{s,p}(\Omega)} \to 0 \hspace{3mm} \text{as} \hspace{3mm} \mu \to 0 \ \text{and then} \ \epsilon \to 0. 
    \end{equation*}
    Hence,  $u \in W^{s,p}_{0}(\Omega \setminus K)$. This finishes the proof of lemma.
\end{proof}

\begin{rem}\label{rem sp<k}
    Assume  $\Omega$ is a bounded Lipschitz domain in $\mathbb{R}^{d}$, where $d \geq 2$,  $sp=k$ and  $s'<s$, it follows that  $s'p<k$. We have the embedding  $W^{s,p}(\Omega) \subset W^{s',p}(\Omega)$ and the inequality  $\|u\|_{W^{s',p}(\Omega)} \leq C\|u\|_{W^{s,p}(\Omega)}$ (see \cite[Proposition 2.1]{di2012hitchhikers}). Therefore, following a similar approach illustrated as above lemma, we can deduce that  $W^{s',p}_{0}(\Omega \setminus K) = W^{s',p}(\Omega)$. Moreover, if we assume  $p>k$. Then   $W^{s,p}_{0}(\Omega \setminus K) = W^{s,p}(\Omega)$ for  $sp<k$.
\end{rem}

\smallskip

The following lemma proves the optimality of the weight function in Theorem  \ref{theorem 1 sp=k} for general bounded Lipschitz domain  $\Omega$ with a compact set  $K \subset \Omega$ of codimension  $k$ of class  $C^{0,1}$, where  $1<k<d, ~ k \in \mathbb{N}$ for  $\tau =p$. The above three lemmas, Lemma  \ref{lemma on u_k}, Lemma  \ref{lemma for Phi mu} and Lemma  \ref{density lemma} are the key ingredients to prove the next lemma.

\begin{lemma}\label{lemma proof of opt gen domain}
    Let  $\Omega$ be a bounded Lipschitz domain in  $\mathbb{R}^{d}$, where $d \geq 2$, and let  $K \subset \Omega$ be a compact set of codimension  $k$ of class  $C^{0,1}$,  where $1<k<d, ~ k \in \mathbb{N}$ and  $sp=k$. Then the logarithmic weight function in Theorem  \ref{theorem 1 sp=k} is optimal for  $\tau =p$.
\end{lemma}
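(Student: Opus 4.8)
The plan is to reduce the general Lipschitz case to the flat model via the usual localization and then to exploit the explicit test functions $\Phi_\mu$ constructed in Lemma~\ref{lemma for Phi mu} to show that the exponent $\beta=p$ on the logarithm cannot be lowered. First I would set up the negation: suppose for contradiction that Theorem~\ref{theorem 1 sp=k} with $\tau=p$ holds with a weight $\delta_K^{k}(x)\,\ln^{\beta}\!\big(\tfrac{2R}{\delta_K(x)}\big)$ for some $\beta<p$, i.e.
\begin{equation*}
    \left(\bigintsss_{\Omega}\frac{|u(x)-(u)_\Omega|^{p}}{\delta_K^{k}(x)\,\ln^{\beta}\!\left(\frac{2R}{\delta_K(x)}\right)}\,dx\right)^{\frac1p}\le C\,[u]_{W^{s,p}(\Omega)},\qquad \forall\,u\in W^{s,p}_0(\Omega\setminus K).
\end{equation*}
By the localization/patching argument (and the fact that near $K$ we have $\delta_K(x)\sim|\xi_k|$ through the charts $T_{x_i}$ of Definition~\ref{definition}), it suffices to derive a contradiction in the flat model $\Omega=B_1^k(0)\times(0,1)^{d-k}$ with $K=\{x_k=0,\ x_{d-k}\in[0,1]^{d-k}\}$, where $\delta_K(x)=|x_k|$; here $R$ may be taken to be a fixed constant (say comparable to $1$) since $\delta_K<R$ on $\Omega$. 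Indeed, if the inequality held on a general domain it would hold on a small chart neighbourhood, which under $T_{x_i}$ transfers to the flat inequality up to the bi-Lipschitz constants, with only a lower-order $L^p$ term coming from the region where $\delta_K$ is bounded below.

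Next I would plug in the functions $\Phi_\mu$ from \eqref{fn phi_mu}, viewed as functions on $\Omega$ depending only on $x_k$. Fix $\epsilon\in(0,1)$ small. On the set $\{\epsilon\le|x_k|<1\}$ (which has positive $d$-dimensional measure, bounded below independently of $\epsilon$ up to a factor $(1-\epsilon)^k\to1$) we have $\Phi_\mu\equiv 1$, while $\Phi_\mu\to1$ in $L^p$, so $(\Phi_\mu)_\Omega\to1$ and hence $\int_\Omega|\Phi_\mu-(\Phi_\mu)_\Omega|^p\,dx$ stays bounded; more to the point, a direct computation analogous to the first identity in Lemma~\ref{lemma on u_k} gives
\begin{equation*}
    \bigintsss_{\Omega}\frac{|\Phi_\mu(x)-(\Phi_\mu)_\Omega|^{p}}{|x_k|^{k}\,\ln^{\beta}\!\left(\frac{2R}{|x_k|}\right)}\,dx \;\ge\; c\,\ln^{\,p-1-\beta}\!\left(\frac{2}{\epsilon}\right) + O_\epsilon(1),
\end{equation*}
for a constant $c>0$ independent of $\mu$ and $\epsilon$: this is the same change of variables $\xi=\ln(2/r)/\ln(2/\epsilon)$ that produced the exact asymptotics $C/\ln^{p-1}(2/\epsilon)$ in Lemma~\ref{lemma on u_k}, with one extra power $\ln^{\beta}(2/\epsilon)$ removed from the denominator by the weaker weight, plus the contributions of $L_2,L_3,L_4$-type regions near $\mu$, which vanish as $\mu\to0$ exactly as in the proof of Lemma~\ref{lemma for Phi mu}. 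On the other hand, Lemma~\ref{lemma for Phi mu} (together with Lemma~\ref{lemma on u_k}) gives $\lim_{\mu\to0}[\Phi_\mu]^p_{W^{s,p}(\Omega)}\le C/\ln^{p-1}(2/\epsilon)$, and the $L^p$-norm of $\Phi_\mu$ is bounded, so $\limsup_{\mu\to0}\|\Phi_\mu\|_{W^{s,p}(\Omega)}^p\le C$ uniformly in $\epsilon$ — in fact it tends to a finite limit as $\epsilon\to0$ since $\Phi_\mu\to1$.

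Combining the two displays with the assumed inequality, letting $\mu\to0$ first and then $\epsilon\to0$, the left-hand side is bounded below by $c\,\ln^{p-1-\beta}(2/\epsilon)+O_\epsilon(1)\to\infty$ (because $p-1-\beta>-1$, i.e. $\beta<p$), while the right-hand side stays bounded, a contradiction. This shows $\beta=p$ is optimal for $\tau=p$ on any bounded Lipschitz $\Omega$. The main obstacle I anticipate is twofold: first, making the localization rigorous, i.e. checking that after cutting off with the partition of unity $\{\eta_i\}$ and transporting through $T_{x_i}$ one still produces, from a test function concentrated near a point of $K$, a genuine competitor in $W^{s,p}_0(\Omega\setminus K)$ whose weighted integral blows up at the right rate while its full Sobolev norm stays bounded — here one uses that $\|T\|_{C^{0,1}}+\|T^{-1}\|_{C^{0,1}}\le C$ and $\delta_K\sim|\xi_k|$ to compare both sides up to multiplicative constants, and Lemma~\ref{testfunc} to control $\eta_i\Phi_\mu$; second, verifying the lower bound for the weighted integral of $\Phi_\mu$ carefully, ensuring the $O_\epsilon(1)$ and the $\mu\to0$ error terms genuinely do not interfere with the divergent main term $\ln^{p-1-\beta}(2/\epsilon)$ — this is where one must track the change-of-variables constants from Lemma~\ref{lemma on u_k} rather than merely cite its conclusion.
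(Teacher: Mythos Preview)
Your overall strategy---localize to the flat model, insert the explicit family $\Phi_\mu$ (equivalently $v_\epsilon=u_k$), and compare the growth/decay rates of the two sides in $\epsilon$---is exactly the paper's approach. Two points, however, need correction, and the second one is a genuine gap in the argument as written.

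First, a computational slip: the lower bound for the weighted integral is not $c\ln^{p-1-\beta}(2/\epsilon)$ but $c\ln^{1-\beta}(2/\epsilon)$. The change of variables $\xi=\ln(2/r)/\ln(2/\epsilon)$ on $\{0<|x_k|<\epsilon\}$ turns the integral into $C\ln^{1-\beta}(2/\epsilon)\int_1^\infty(\xi-1)^p\xi^{-(p+\beta)}\,d\xi$; with $\beta=p$ this recovers the $\ln^{1-p}=\ln^{-(p-1)}$ of Lemma~\ref{lemma on u_k}, and for general $\beta>1$ it gives $\ln^{1-\beta}$.

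Second, and more importantly, the contradiction step ``LHS $\to\infty$ while RHS stays bounded'' does not close the argument for all $\beta<p$. Since $sp=k>1$ forces $p>1$, in the interesting range $1<\beta<p$ both sides of the inequality tend to~$0$: the weighted integral like $\ln^{1-\beta}(2/\epsilon)\to 0$ and the seminorm like $[\Phi_\mu]^p\le C\ln^{1-p}(2/\epsilon)\to 0$. What yields the contradiction is comparing the \emph{rates}: the assumed inequality forces $\ln^{1-\beta}(2/\epsilon)\le C\ln^{1-p}(2/\epsilon)$, i.e.\ $\ln^{p-\beta}(2/\epsilon)\le C$, which fails as $\epsilon\to 0$ precisely when $\beta<p$. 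This is also why the seminorm form (with $u-(u)_\Omega$ on the left and $[u]_{W^{s,p}}$ on the right) is essential here; comparing against the full norm $\|\Phi_\mu\|_{W^{s,p}}\sim 1$ would only rule out $\beta<1$. The paper carries out exactly this rate comparison (phrased with a generic improvement factor $f(x_k)\to\infty$ in place of $\ln^{p-\beta}$), using $v_\epsilon=u_k$ directly rather than passing through $\Phi_\mu$ and $\mu\to 0$.
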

\begin{proof}
We prove the optimality of weight function in Lemma  \ref{flat case sp=k 1} with  $\Omega= B^{k}_{1}(0) \times (0,1)^{d-k}$ and  $K= \{ x= (x_{k}, x_{d-k}) \in \mathbb{R}^{d} : x_{k} = 0  \ \text{and} \ x_{d-k} \in [0,1]^{d-k} \}$. For general domain, it follows from patching business. Assume that the weight function in Lemma  \ref{flat case sp=k 1} is not optimal. Suppose there exists a function  $f$ with the property  $f(x_{k}) \to \infty$ as $|x_{k}| \to 0$. This function  $f$ allows for a further improvement of the inequality in Theorem  \ref{theorem 1 sp=k} such that the following holds:
\begin{equation*}
    \bigintsss_{\Omega} \frac{|f(x_{k})||u(x) - (u)_{\Omega}|^{p}}{|x_{k}|^{k} \ln^{p} \left( \frac{2}{|x_{k}|}  \right)} \, dx  \leq [u]^{p}_{W^{s,p}(\Omega)}, \quad \forall \ u \in W^{s,p}_{0}(\Omega \setminus K).
\end{equation*}
Let  $v_{\epsilon}$ be a function defined on  $\Omega= B^{k}_{1}(0) \times (0,1)^{d-k}$ such that  $v_{\epsilon}(x_{k}, x_{d-k}) = u_{k}(x_{k})$ for all  $x= (x_{k}, x_{d-k}) \in \Omega, ~ x_{k} \in B^{k}_{1}(0)$ and  $x_{d-k} \in (0,1)^{d-k}$, where the functions  $u_{k}$ defined as in  \eqref{fn u_k}. From Lemma  \ref{density lemma} and slicing inequality (see, Subsection  \ref{slicing lemma}), we have  $v_{\epsilon} \in W^{s,p}_{0}(\Omega \setminus K)$ for the case  $sp=k$. Therefore, the above inequality hold true for  $v_{\epsilon}$. Using a similar type of computation as done in Lemma  \ref{lemma on u_k}, we have
\begin{align*}
  \bigintsss_{\Omega} \frac{|f(x_{k})||v_{\epsilon}(x_{k},x_{d-k}) - (v_{\epsilon})_{\Omega}|^{p}}{|x_{k}|^{k} \ln^{p} \left( \frac{2}{|x_{k}|}  \right)} \, dx & =  \bigintsss_{B^{k}_{1}(0)} \frac{|f(x_{k})||u_{k}(x_{k}) - (u_{k})_{B^{k}_{1}(0)}|^{p}}{|x_{k}|^{k} \ln^{p} \left( \frac{2}{|x_{k}|}  \right)} \, dx_{k} \\ & \geq |f(x_{k,\epsilon})| \left( \frac{C}{\ln^{p -1} \left( \frac{2}{\epsilon} \right)} + O(\epsilon) \right),
\end{align*}
where  $|x_{k,\epsilon}| = \epsilon$ and $x_{k, \epsilon} \in \mathbb{R}^{k}$. Using the above two inequalities, slicing inequality (see Subsection  \ref{slicing lemma}) and Lemma  \ref{lemma on u_k}, we have
\begin{align*}
|f(x_{k,\epsilon})| \left( \frac{C}{\ln^{p -1} \left( \frac{2}{\epsilon} \right)} + O(\epsilon) \right) & \leq   \bigintsss_{\Omega} \frac{|f(x_{k})||v_{\epsilon}(x_{k}, x_{d-k}) - (v_{\epsilon})_{\Omega}|^{p}}{|x_{k}|^{k} \ln^{p} \left( \frac{2}{|x_{k}|}  \right)} \, dx \\ & \leq C [v_{\epsilon}]^{p}_{W^{s,p}(\Omega)}  \leq C [u_{k}]^{p}_{W^{s,p}(B^{k}_{1}(0))} \leq \frac{C}{\ln^{p-1} \left( \frac{2}{\epsilon} \right)}.
\end{align*}
Therefore, we have for some constant  $C>0$ does not depend on $\epsilon$
\begin{equation*}
    |f(x_{k,\epsilon})| \left( C + \left( \ln^{p-1} \frac{2}{\epsilon} \right)  O(\epsilon) \right) \leq C 
\end{equation*}
which is a contradictions as  $|f(x_{k,\epsilon})| \to \infty$ as  $|x_{k, \epsilon}|=\epsilon \to 0$. This proves the optimality of weight function in Lemma  \ref{flat case sp=k 1}.
\end{proof}


\section{Appendix}\label{appendix}

In this section, we will prove some estimates which we have used to prove the main results. This estimates are obvious and easy to establish.  

\subsection{Some inequality}\label{some ineq} \textbf{(1)} We establish the following inequality holds
\begin{align*}
    \int_{0}^{2} z^{p} \int_{|\omega'|<1} \frac{1}{(|w'|^{2}+ z^{2})^{k}} \, d \omega' \, dz & = \int_{0}^{2} z^{p} \int_{0}^{1} \frac{\theta^{k-2}}{z^{2k} \left( \frac{\theta^{2}}{z^{2}} +1 \right)^{k}} \, d \theta \, dz \\ & = \int_{0}^{2} z^{p} \int_{0}^{1} \frac{\left( \frac{\theta}{z} \right)^{k-2} z^{k-2} }{z^{2k} \left( \frac{\theta^{2}}{z^{2}} +1 \right)^{k}} \, d \theta \, dz.
\end{align*}
Using the change of variable  $\frac{\theta}{z} = t$ and  $sp=k$, i.e.,  $p>k$, we obtain
\begin{equation}\label{appen 1}
    \int_{0}^{2} z^{p-k-1} \int_{0}^{\frac{1}{z}} \frac{t^{k-2}}{(t^{2}+1)^{k}} \, dt \, dz \leq \int_{0}^{2} z^{p-k-1} \int_{0}^{\infty} \frac{t^{k-2}}{(t^{2}+1)^{k}} \, dt \, dz \leq C.
\end{equation}

\smallskip

\textbf{(2)} We will calculate the inequality which is useful in establishing our main results for flat boundary case. Recall that $$\Omega_{n} = \left\{ x= (x_{k}, x_{d-k}) :  |x_{k}| < 1 \ \text{and} \ x_{d-k} \in (-n, n)^{d-k}   \right\}$$ and the sets $A_{\ell}$ are defined in Section  \ref{The flat boundary case}. We aim to show that
\begin{equation}\label{se1}
     \sum_{\ell= m}^{-2} [u]^{p}_{W^{s,p}(A_{\ell} \cup A_{\ell+1})} \leq 2 [u]^{p}_{W^{s,p}(\Omega_{n})}.
\end{equation}
Consider two families of sets:
\begin{equation*}
  \mathcal{E}:=  \left\{ A_{\ell} \cup A_{\ell+1} : -\ell \ \text{is even and} \ \ell \leq -1  \right\}
\end{equation*}
and
\begin{equation*}
  \mathcal{O}:=  \left\{ A_{\ell} \cup A_{\ell+1} : -\ell \ \text{is odd and} \ \ell \leq -1  \right\}.
\end{equation*}
Then  $\mathcal{E}$ and  $\mathcal{O}$ are collection of mutually disjoint sets respectively. Define
\begin{equation*}
    \mathcal{F}_{e} := \bigcup_{\substack{\ell =m \\ -\ell \ \text{is even}}}^{-2} A_{\ell} \cup A_{\ell+1} \hspace{3mm} \text{and} \hspace{3mm} \mathcal{F}_{o} := \bigcup_{\substack{\ell =m \\ -\ell \ \text{is odd}}}^{-2} A_{\ell} \cup A_{\ell+1}.
\end{equation*} 
From the definition of  $A_{\ell}$, we have $\mathcal{F}_{e} \subset \Omega_{n}$ and $\mathcal{F}_{o} \subset \Omega_{n}$. Therefore, we have
\begin{align}\label{sum Ak A(k+1)}
    \sum_{\ell= m}^{-2} [u]^{p}_{W^{s,p}(A_{\ell} \cup A_{\ell+1})} & = \sum_{\substack{\ell =m \\ -\ell \ \text{is even}}}^{-2} [u]^{p}_{W^{s,p}(A_{\ell} \cup A_{\ell+1})} + \sum_{\substack{\ell =m \\ -\ell \ \text{is odd}}}^{-2} [u]^{p}_{W^{s,p}(A_{\ell} \cup A_{\ell+1})} \nonumber \\ & \leq [u]^{p}_{W^{s,p}(\mathcal{F}_{e})} + [u]^{p}_{W^{s,p}(\mathcal{F}_{o})} \leq 2 [u]^{p}_{W^{s,p}(\Omega_{n})}.
\end{align}
This establishes the desired inequality.

\subsection{Slicing inequality}\label{slicing lemma}
We introduce a slicing lemma for the Gagliardo seminorm. While this inequality can be found in  \cite[Section 6.2]{leonibook}, for the sake of completeness and self-containment, we present its proof within this article. Let  $v_{\epsilon}$ be a function defined on  $\Omega= B^{k}_{1}(0) \times (0,1)^{d-k}$ such that  $v_{\epsilon}(x_{k}, x_{d-k}) = u_{k}(x_{k})$ for all  $x= (x_{k}, x_{d-k}) \in \Omega, ~ x_{k} \in B^{k}_{1}(0)$ and  $x_{d-k} \in (0,1)^{d-k}$, where the functions  $u_{k}$ defined as in  \eqref{fn u_k} (Section  \ref{optimality section}). Therefore, we have
\begin{align*}
    [v_{\epsilon}]^{p}_{W^{s,p}(\Omega)} & =  \int_{\Omega} \int_{\Omega} \frac{|u_{k}(x_{k})- u_{k}(x_{k})|^{p}}{|x-y|^{d+sp}} \, dx \, dy \\ & =  \bigintss_{\Omega} \bigintss_{\Omega} \frac{|u_{k}(x_{k})- u_{k}(x_{k})|^{p}}{|x_{k}-y_{k}|^{d+sp} \left( 1+ \sum_{i=k+1}^{d} \left| \frac{x_{i}-y_{i}}{|x_{k}-y_{k}|} \right|^{2} \right)^{\frac{d+sp}{2}}} \, dx \,  dy .
\end{align*}
Using the change of variables  $z_{i} =\frac{x_{i}- y_{i}}{|x_{k}-y_{k}|}$ and using
\begin{equation*}
    \int_{(0, \infty)^{d-k}} \frac{1}{(1+ \sum_{i=k+1}^{d} |z_{i}|^{2} )^{\frac{d+sp}{2}}}  \, dz_{k+1} \dots \, dz_{d} < \infty,
\end{equation*}
we obtain
\begin{equation*}
    [v_{\epsilon}]^{p}_{W^{s,p}(\Omega)} \leq C [u_{k}]^{p}_{W^{s,p}(B^{k}_{1}(0))}.
\end{equation*}

\bigskip

\bigskip 

\textbf{Acknowledgement:} We express our gratitude to the Department of Mathematics and Statistics at the Indian Institute of Technology Kanpur for providing conductive research environment. For this work, Adimurthi acknowledges support from IIT Kanpur, while P. Roy is supported by the Core Research Grant (CRG/2022/007867) of SERB. V. Sahu is grateful for the support received through MHRD, Government of India (GATE fellowship).

\end{document}